\newtheorem*{theorem*}{Theorem}
\newtheorem{theorem}{Theorem}[section]\crefname{theorem}{Theorem}{Theorems}
\newtheorem{lemma}[theorem]{Lemma}\crefname{lemma}{Lemma}{Lemmas}
\crefname{claim}{Claim}{Claims}
\crefname{proposition}{Proposition}{Propositions}
\crefname{observation}{Observation}{Observations}
\newtheorem{corollary}[theorem]{Corollary}\crefname{corollary}{Corollary}{Corollaries}
\crefname{conjecture}{Conjecture}{Conjecture}
\theoremstyle{definition}
\newtheorem{definition}[theorem]{Definition}\crefname{definition}{Definition}{Definitions}
\crefname{problem}{Problem}{Problems}
\newtheorem{remark}[theorem]{Remark}\crefname{remark}{Remark}{Remarks}
\crefname{example}{Example}{Examples}
\crefname{condition}{Condition}{Conditions}
\numberwithin{equation}{section}
\newtheorem*{corollary*}{Corollary}
\newtheorem*{proposition*}{Proposition}
\let\c@algorithm\relax 
\newaliascnt{algorithm}{theorem}
\let\c@table\relax 
\newaliascnt{table}{theorem}
\DeclareMathOperator{\conv}{conv}
\DeclareMathOperator{\diag}{diag}
\DeclareMathOperator{\GL}{GL}
\DeclareMathOperator{\Herm}{Herm}
\DeclareMathOperator{\Lie}{Lie}
\DeclareMathOperator{\spec}{spec}
\DeclareMathOperator{\supp}{supp}
\DeclareMathOperator{\U}{U}
\newcommand{\R}{\mathbb{R}}
\newcommand{\C}{\mathbb{C}}
\def\<#1>{\left\langle\ignorespaces#1\unskip\right\rangle}
\newcommand{\ot}{\otimes}
\DeclarePairedDelimiter\abs{\lvert}{\rvert}
\DeclarePairedDelimiter\norm{\lVert}{\rVert}
\newcommand{\closure}{\overline}
\newcommand{\Proj}{\mathbb{P}}
\newcommand{\weylchamber}{\mathcal D}
\newcommand{\diff}{\textnormal{d}}
\newcommand{\gldim}{n}
\newcommand{\tensor}{T} %
\newcommand{\sep}{\mid} %
\newcommand{\lowertriangular}{\mathchoice
    {\tikz[scale=1]{\draw (0,0) -- (4mm,0mm) -- (0,4mm) -- cycle}}
    {\tikz[scale=1]{\draw (0,0) -- (4mm,0mm) -- (0,4mm) -- cycle}}
    {\hspace{1pt}\tikz[scale=0.40]{\draw (0,0) -- (4mm,0mm) -- (0,4mm) -- cycle}}
    {\hspace{0.5pt}\tikz[scale=0.33]{\draw (0,0) -- (4mm,0mm) -- (0,4mm) -- cycle}}
}
\newcommand{\uppertriangular}{\mathchoice
    {\tikz[scale=1]{\draw (0,4mm) -- (4mm,4mm) -- (4mm,0mm) -- cycle}}
    {\tikz[scale=1]{\draw (0,4mm) -- (4mm,4mm) -- (4mm,0mm) -- cycle}}
    {\tikz[scale=0.40]{\draw (0,4mm) -- (4mm,4mm) -- (4mm,0mm) -- cycle}}
    {\tikz[scale=0.33]{\draw (0,4mm) -- (4mm,4mm) -- (4mm,0mm) -- cycle}}
}
\newcommand{\G}{\textnormal{GL}}
\newcommand{\Sl}{\textnormal{SL}}
\newcommand{\K}{\textnormal{U}}
\newcommand{\D}{\textnormal{D}}
\newcommand{\momentmaptorus}{\mu_\D}
\newcommand{\momentmaptorusgeneral}{\mu_D}
\newcommand{\grayzero}{{\color{gray}0}}
\newcommand{\nurmievT}{\mathsf{T}}
\begin{document}

\title{Explicit non-free tensors}
\author[1,2]{Maxim van den Berg}
\author[3]{Matthias Christandl}
\author[1]{Vladimir Lysikov}
\author[3]{Harold Nieuwboer}
\author[1]{Michael Walter}
\author[2]{Jeroen Zuiddam}

\affil[1]{Ruhr University Bochum, Bochum, Germany}
\affil[2]{University of Amsterdam, Amsterdam, Netherlands}
\affil[3]{University of Copenhagen, Copenhagen, Denmark}

\date{}
\maketitle
\vspace{-2em}
\begin{abstract} 
Free tensors are tensors which, after a change of bases, have free support: any two distinct elements of its support differ in at least two coordinates. They play a distinguished role in the theory of bilinear complexity, in particular in Strassen's duality theory for asymptotic rank.
Within the context of quantum information theory, where tensors are interpreted as multiparticle quantum states, freeness %
corresponds to a type of multiparticle Schmidt decomposition. In particular, if a state is free in a given basis, the reduced density matrices are diagonal. 
Although generic tensors in~$\C^n \ot \C^n \ot \C^n$ are non-free for~$n \geq 4$ by parameter counting, no explicit non-free tensors were known until now.
We solve this hay in a haystack problem by constructing explicit tensors that are non-free for every~$n \geq 3$.
In particular, this establishes that non-free tensors exist in~$\C^3 \ot \C^3 \ot \C^3$, where they are not generic.

To establish non-freeness, we use results from geometric invariant theory and the theory of moment polytopes.
In particular, we show that if a tensor~$T$ is free, then there is a tensor~$S$ in the~$\mathrm{GL}$-orbit closure of~$T$, whose support is free and whose moment map image is the minimum-norm point of the moment polytope of~$T$.
This implies a reduction for checking non-freeness from arbitrary basis changes of~$T$ to unitary basis changes of~$S$.
The unitary equivariance of the moment map can then be combined with the fact that tensors with free support have diagonal moment map image, in order to further restrict the set of relevant basis changes.
\end{abstract}

\setcounter{tocdepth}{1}
\section{Introduction}
\label{section:non-free tensor}

Generalizing the notion of a matrix, a~$k$-tensor can be described as a multidimensional array of numbers with~$k$ indices $T = (T_{i_1, \dotsc, i_k})_{i_1, \dotsc, i_k}$, where the~$i_\ell$ range from~$1$ to some natural number~$n_\ell$.
They play a pivotal role in algebraic complexity theory (bilinear complexity), quantum information theory (quantum states), combinatorics (cap sets and related problems).
In this paper we will restrict our attention to tensors of order~$3$, as they are of primary relevance for bilinear complexity.

We call any subset $S \subseteq [n]^3$ \emph{free} if any two distinct elements of $S$ differ in at least two coordinates.
For any tensor $T \in \C^n \otimes \C^n \otimes \C^n$ let $\supp(T) \subseteq [n]^3$ denote its support, consisting of the multi-indices at which~$T$ is non-zero. 
We call $T$ \emph{free} if there exist $(g_1, g_2, g_3) \in \GL_n(\C) \times \GL_n(\C) \times \GL_n(\C)$ such that $\supp((g_1 \otimes g_2 \otimes g_3) T)$ is free; in other words, $T$ has free support after possibly changing the basis on each of the tensor factors.
In particular, the tensor is sparse in this basis, in that it has at most~$n^2$ non-zero entries.
The class of free tensors play a particularly important role in several parts of the theory of moment polytopes and asymptotic spectra \cite{franz2002, strassenKomplexitatUndGeometrie2005}, which we will elaborate on below.

Generic tensors are not free, as can be seen by a dimension argument \cite[Remark 4.19]{christandlUniversalPointsAsymptotic2021}.
In \cite{connerGeometricApproachStrassen2021} the dimension of the Zariski-closure of the set of free tensors in $\C^n \ot \C^n \ot \C^n$ was determined to be $4n^2 - 3n$.
This implies that for every $n\geq4$ non-free tensors exist in $\C^n \ot \C^n \ot \C^n$, but they are dense for~$n=3$.
However, no explicit non-free tensors were known until now.

\subsubsection*{Main results}
In this paper, we resolve this ``hay in a haystack'' problem, in the sense that we give explicit non-free tensors in all cubic shapes of dimension at least three:
\begin{theorem}
    \label{thm:explicit non-free in balanced format}
For every $\gldim \geq 3$ the tensor $T \in \C^n \ot \C^n \ot \C^n$ given by
\[
T = \sum_{k=1}^{n-1} e_1 \otimes e_n \otimes e_k + \sum_{i=2}^n e_i \otimes e_{n+1-i}\otimes e_{n+1-i}+\sum_{i=1}^{n-1}e_i \otimes e_{n-i}\otimes e_n
\]
is non-free.
    In fact, a generic tensor with support contained in 
    \[
    \{(i,j,k) \in [n]^3 \mid \textnormal{$k < \gldim$ and $i+j=n+1$, or $k = \gldim$ and $i+j = \gldim$}\}
    \]
    is equivalent to the above tensor $T$ and hence non-free.
\end{theorem}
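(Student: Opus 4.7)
The plan is to prove the two assertions separately: that the explicit tensor $T$ is non-free (the main content), and that a generic tensor with support in the enlarged set is $\GL_n^3$-equivalent to $T$, so that non-freeness transfers.

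For non-freeness of $T$, I apply the moment-polytope reduction stated in the abstract: if $T$ were free, there would exist $S \in \overline{\GL_n^3 \cdot T}$ with free support and $\mu(S) = p$, where $p$ is the minimum-norm point of the moment polytope $\Delta(T)$. A routine index check shows the three sums defining $T$ have pairwise disjoint supports, so $T$ has exactly $3(n-1)$ nonzero entries of value $1$, and a multiplicity count yields
\[
\rho_1(T) = \mathrm{diag}(n, 2, \ldots, 2, 1), \qquad \rho_2(T) = \rho_3(T) = \mathrm{diag}(2, \ldots, 2, n-1).
\]
In particular $\mu(T)$ is already diagonal. I would then argue that $\mu(T)$ coincides with $p$, either by certifying polystability of $T$ via a critical-point computation (the diagonal form of $\mu(T)$ is suggestive) or by a direct verification from the combinatorics of the support polytope. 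Once $\mu(T) = p$, Kempf--Ness uniqueness of the $\mu$-fibre over $p$ yields $S = u \cdot T$ for some $u = (u_1, u_2, u_3) \in \U_n^3$. Free support of $S$ forces $u_\ell \rho_\ell(T) u_\ell^*$ to be diagonal for each $\ell$, so each $u_\ell$ must preserve the eigenspace decomposition of $\rho_\ell(T)$ (up to signed permutation): $u_1 \in \U(1) \oplus \U(n-2) \oplus \U(1)$ and $u_2, u_3 \in \U(n-1) \oplus \U(1)$ in the standard basis (with the caveat that for $n=3$ the constraints on $u_2, u_3$ are weaker since $\rho_2(T) = \rho_3(T) = 2 I_3$).

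The main obstacle is to rule out all such block-unitaries $u$. The key structural observation is that the first sum $\sum_{k=1}^{n-1} e_1 \otimes e_n \otimes e_k = e_1 \otimes e_n \otimes (e_1 + \cdots + e_{n-1})$ of $T$ is a rank-one tensor, and under $u$ it maps to $\xi_1 \zeta_n\, e_1 \otimes e_n \otimes u_3(e_1 + \cdots + e_{n-1})$, where $\xi_1, \zeta_n$ are the $\U(1)$-phases in the $e_1$- and $e_n$-blocks. Crucially, no other summand of $T$ contributes to positions with first coordinate $1$ and second coordinate $n$, since both the second and third sums have their second-factor images contained in $\linspan(e_1, \ldots, e_{n-1})$. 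Hence freeness of $\supp(u \cdot T)$ demands that $u_3(e_1 + \cdots + e_{n-1})$ be a phased basis vector, which pins down one row of $u_3$ as proportional to $(1, 1, \ldots, 1, 0)$. An analogous analysis on the third sum (the only contribution to positions with third coordinate $n$) and on the corner element $(n, 1, 1)$ (the only contribution with first coordinate $n$) imposes matching rigidity constraints on $u_1, u_2$. The admissible $u$'s thereby reduce to an essentially finite, signed-permutation-like collection, and a direct check shows each such $u$ still leaves a pair of support elements of $u \cdot T$ differing in only one coordinate --- contradicting freeness.

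For the second assertion, write a generic $\Sigma$-supported tensor as $T' = \sum_{k=1}^{n-1} \sum_{i=1}^n \alpha_{i,k}\, e_i \otimes e_{n+1-i} \otimes e_k + \sum_{i=1}^{n-1} \beta_i\, e_i \otimes e_{n-i} \otimes e_n$, parameterised by an $n \times (n{-}1)$ matrix $\alpha$ and an $(n{-}1)$-vector $\beta$ (totalling $n^2 - 1$ parameters). A block-diagonal $g_3 \in \GL_{n-1} \oplus \GL_1$ realises the right $\GL_{n-1}$-action on the columns of $\alpha$ while scaling $\beta$; combining with diagonal and suitably triangular actions on the first and second factors supplies enough freedom to normalise a generic $(\alpha, \beta)$ to the sparse pattern of $T$ by successive row-reductions on each antidiagonal slab. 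Since freeness is a $\GL_n^3$-invariant, non-freeness transfers from $T$ to every generic $\Sigma$-supported tensor.
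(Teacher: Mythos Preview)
Your proposal has a genuine gap at the very first computational step: the claim that $\mu(T)$ is diagonal is false. The third component $\mu_3(T)$ is \emph{not} diagonal, because the first sum $\sum_{k=1}^{n-1} e_1 \otimes e_n \otimes e_k$ places a nonzero entry at position $(1,n)$ in \emph{every} column $k \in [n-1]$. Concretely, for any $k \neq k'$ with $k,k' < n$ one has
\[
\bigl(\mu_3(T)\bigr)_{k,k'} \;=\; \frac{1}{\norm{T}^2}\sum_{i,j} \overline{T_{i,j,k}}\,T_{i,j,k'} \;=\; \frac{1}{3(n-1)}\,\overline{T_{1,n,k}}\,T_{1,n,k'} \;=\; \frac{1}{3(n-1)} \;\neq\; 0.
\]
So $T$ is not a critical point of $\norm{\mu}$, the Ness criterion $\exp(t\,\mu(T))\cdot T \propto T$ fails, and there is no reason to expect $\mu(T)$ (or its spectrum) to be the minimum-norm point of $\Delta(T)$. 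In fact the actual minimum-norm point $q$ computed in the paper has $q_1 = q_2$ with all entries distinct and $q_3$ with the first $n-1$ entries equal --- a completely different eigenspace pattern from your $\rho_1 = \diag(n,2,\dotsc,2,1)$, $\rho_2 = \rho_3 = \diag(2,\dotsc,2,n-1)$. Consequently the block structure you deduce for the admissible unitaries $u$, and the subsequent rigidity argument about $u_3(e_1+\dotsb+e_{n-1})$, are built on an incorrect foundation.

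This is precisely why the paper does \emph{not} work directly with the $0/1$ tensor $T$. Instead it constructs, for each $n$, an auxiliary tensor $T^W$ with support contained in the full anti-diagonal pattern $\Gamma_n$ and with carefully chosen coefficients $W \in \mathcal{W}_n$ (via a Schur--Horn-type argument) so that $\mu(T^W) = q$ is genuinely the minimum-norm point of $\Delta(T^W)$. Only then does the reduction of Theorem~1.3 apply, and the non-freeness of $T^W$ is obtained by showing that every row of $W$ has at least two nonzero entries. The link back to the $0/1$ tensor $T$ is made a posteriori, by proving that $T^W$ and $T$ lie in the same $\GL$-orbit (your second assertion, which the paper handles with an explicit normalisation argument similar in spirit to what you sketch). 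The step you are missing is exactly this passage to a well-chosen representative in the orbit before invoking the moment-map criterion.
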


For example, for~$n=3$ we have the non-free tensor
\begin{equation*}
  T =
  \left[\!\arraycolsep=3pt
    \begin{array}{ccc|ccc|ccc}
      \grayzero&\grayzero&\grayzero& \grayzero&\grayzero&1& 1&\grayzero&\grayzero \\
      \grayzero&\grayzero&1& \grayzero&1&\grayzero& \grayzero&\grayzero&\grayzero \\
      1&1&\grayzero& \grayzero&\grayzero&\grayzero& \grayzero&\grayzero&\grayzero
    \end{array}\!
  \right],
\end{equation*}
and for $n = 4$ we have
\begin{equation*}
  T =
  \left[\!\arraycolsep=3pt
    \begin{array}{cccc|cccc|cccc|cccc}
      \grayzero&\grayzero&\grayzero&\grayzero& \grayzero&\grayzero&\grayzero&\grayzero& \grayzero&\grayzero&\grayzero&1& 1&\grayzero&\grayzero&\grayzero \\
      \grayzero&\grayzero&\grayzero&\grayzero& \grayzero&\grayzero&\grayzero&1& \grayzero&1&\grayzero&\grayzero& \grayzero&\grayzero&\grayzero&\grayzero \\
      \grayzero&\grayzero&\grayzero&1& \grayzero&\grayzero&1&\grayzero& \grayzero&\grayzero&\grayzero&\grayzero& \grayzero&\grayzero&\grayzero&\grayzero \\
      1&1&1&\grayzero& \grayzero&\grayzero&\grayzero&\grayzero& \grayzero&\grayzero&\grayzero&\grayzero& \grayzero&\grayzero&\grayzero&\grayzero
    \end{array}\!
  \right].
\end{equation*}
Freeness of the support in terms of such a slice representation means that (i) every row in every slice contains at most~$1$ non-zero entry, (ii) every column in every slice contains at most~$1$ non-zero entry, and (iii) distinct slices have non-overlapping support. 

We note that the non-free $T$ is close to a free tensor, by a rank-one perturbation: removing the last row of every slice of $T$, that is, removing the term $\sum_{k=1}^{n-1} e_1 \otimes e_n \otimes e_k$, gives a free tensor.

We also note that the tensor $T$ from \autoref{thm:explicit non-free in balanced format} may after relabeling of basis vectors be written as
\[
\sum_{i = 1}^{n-1} \bigl(e_i \ot e_i \ot e_i + e_i \ot e_n \ot e_n\bigr) \,+\, \sum_{j = 1}^{n-1} e_n \ot e_j \ot e_{j+1}
\]
However, the expression for $T$ in \autoref{thm:explicit non-free in balanced format} will be more natural for our proof.

\begin{corollary}
There exist non-free tensors in $\C^3 \ot \C^3 \ot \C^3$.
\end{corollary}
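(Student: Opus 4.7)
The plan is to obtain the corollary as the $n=3$ instance of \autoref{thm:explicit non-free in balanced format}. Setting $n=3$ in the explicit formula
\[
T = \sum_{k=1}^{n-1} e_1 \otimes e_n \otimes e_k + \sum_{i=2}^n e_i \otimes e_{n+1-i}\otimes e_{n+1-i}+\sum_{i=1}^{n-1}e_i \otimes e_{n-i}\otimes e_n
\]
produces the six-term tensor
\[
T = e_1 \ot e_3 \ot e_1 + e_1 \ot e_3 \ot e_2 + e_2 \ot e_2 \ot e_2 + e_3 \ot e_1 \ot e_1 + e_1 \ot e_2 \ot e_3 + e_2 \ot e_1 \ot e_3 \in \C^3 \ot \C^3 \ot \C^3,
\]
which is exactly the slice representation displayed immediately after \autoref{thm:explicit non-free in balanced format}. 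Since $3 \geq 3$, the theorem applies and asserts that this $T$ is non-free, which directly witnesses the existence statement of the corollary.

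Hence the proof consists of nothing more than invoking the theorem at the smallest admissible value of $n$; there is no further argument to carry out, and no genuine obstacle here beyond the proof of \autoref{thm:explicit non-free in balanced format} itself (which is the substantive content and uses the moment polytope / GIT reduction sketched in the abstract).

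What is worth emphasizing in a brief remark after the proof — and the only reason the corollary merits separate billing — is that this case is qualitatively different from $n \geq 4$. By the dimension count of \cite{connerGeometricApproachStrassen2021}, non-free tensors form a proper lower-dimensional subset of $\C^3 \ot \C^3 \ot \C^3$, so one cannot produce a non-free tensor at $n=3$ by a generic or random construction. Thus the value of \autoref{thm:explicit non-free in balanced format} at $n=3$ is precisely that it pins down an explicit member of a thin subvariety, justifying the ``hay in a haystack'' framing most literally at this minimal dimension.
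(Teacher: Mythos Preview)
Your proposal is correct and matches the paper's intended derivation: the corollary is stated immediately after \autoref{thm:explicit non-free in balanced format} precisely because it is the $n=3$ instance, and your expansion of the formula agrees with the displayed slice matrix. The paper also gives an independent direct verification in \cref{section:explicit-non-free-333} (showing $\nurmievT_2$ and $\nurmievT_5$ are non-free via the same moment-map reduction), but that is the same argument specialized to $n=3$ rather than a genuinely different route.
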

In format~$3 \times 3 \times 3$, we in fact find two non-free concise tensors.
These tensors are two of the finitely many~$\Sl$-unstable tensors that exist in this format, as classified by Nurmiev~\cite{nurmievOrbitsInvariantsCubic2000}.
We number these tensors as $\nurmievT_2$ and $\nurmievT_5$ to match the numbering in Nurmiev's paper; the tensor~$\nurmievT_2$ is equivalent to the tensor shown above.
The discovery that they are non-free was in part due to computational efforts \cite{vandenBerg2025momentPolytopeAlgorithm}.
The tensor~$\nurmievT_2$ also inspired our construction of non-free tensors in larger balanced formats.

By contrast, in any strictly smaller format, every tensor is known to be free; this can also be deduced from Nurmiev's classification~\cite{nurmievOrbitsInvariantsCubic2000} of the~$\Sl$-unstable (in the sense of geometric invariant theory) tensors in~$\C^3 \ot \C^3 \ot \C^3$ (noting that non-concise tensors are automatically~$\Sl$-unstable, hence it includes a classification of all tensors of smaller formats).
This classification also shows more explicitly that the free tensors are dense in~$\C^3 \ot \C^3 \ot \C^3$, as there is an open dense family (``family~1'') consisting of free tensors.

Another consequence of Nurmiev's classification, and in particular additional knowledge of the degeneration structure between unstable tensors in this format presented in~\cite{nurmievClosuresNilpotentOrbits2000}, is the following:
\begin{theorem}
    \label{theorem:free-tensors-not-degen-closed}
    There exists a concise free tensor~$T \in \C^3 \ot \C^3 \ot \C^3$
    such that its orbit closure $\overline{\G \cdot T}$ contains a concise non-free tensor.
\end{theorem}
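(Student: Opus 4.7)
The plan is to combine Nurmiev's classification of $\G$-orbits on $\C^3\ot\C^3\ot\C^3$ from \cite{nurmievOrbitsInvariantsCubic2000} with the Hasse diagram of orbit-closure relations among nilpotent orbits from \cite{nurmievClosuresNilpotentOrbits2000}. From the discussion preceding this theorem we know that, among all concise tensors in this format, the only $\G$-orbits consisting of non-free tensors are those of $\nurmievT_2$ and $\nurmievT_5$; every other concise orbit---whether stable, semistable, or one of the remaining nilpotent orbits---admits a free representative.

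Given this, it suffices to exhibit a single concise free tensor $T$ such that $\overline{\G\cdot T}$ meets $\G\cdot\nurmievT_2$ or $\G\cdot\nurmievT_5$. I would read off from the Hasse diagram in \cite{nurmievClosuresNilpotentOrbits2000} a nilpotent orbit $\orbit$ that strictly dominates $\G\cdot\nurmievT_2$ (or $\G\cdot\nurmievT_5$) in the closure order and whose canonical representative is concise. Taking $T$ to be this representative, one obtains a concise tensor for which $\nurmievT_2$ (or $\nurmievT_5$) lies in $\overline{\G\cdot T}$ by construction, and which is free by the preceding paragraph. This would prove the theorem.

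The main obstacle is the verification that such a dominating orbit is in fact both concise and free. Conciseness can be read off by inspection from Nurmiev's explicit canonical forms. Freeness amounts to producing an explicit triple $(g_1,g_2,g_3)\in\GL_3(\C)^3$ such that $\supp((g_1\ot g_2\ot g_3)T)$ has any two elements differing in at least two coordinates; in format $3\times3\times3$ and for the small supports appearing in Nurmiev's canonical forms, this reduces to a finite case analysis on each factor.

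If no nilpotent orbit immediately above $\nurmievT_2$ or $\nurmievT_5$ happens to be concise and free, a fallback is to use a generic member of the dense semistable family~1 instead, which is automatically concise and free by the preceding discussion, and to verify via Hilbert--Mumford (applied to a suitable one-parameter subgroup acting on this family~1 representative) that the chosen non-free tensor $\nurmievT_2$ or $\nurmievT_5$ arises as its limit. Either route ultimately reduces the claim to a concrete, finite check aided by Nurmiev's explicit description of the degeneration structure.
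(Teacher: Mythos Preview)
Your proposal is correct and follows essentially the same route as the paper: the paper simply takes $T = \nurmievT_1$ and $T' = \nurmievT_2$ from Nurmiev's list, citing the degeneration diagram in \cite{nurmievClosuresNilpotentOrbits2000} for the closure relation and the classification for the conciseness and freeness of $\nurmievT_1$. Your assertion that $\nurmievT_2$ and $\nurmievT_5$ are the \emph{only} non-free concise orbits is slightly stronger than what the paper explicitly establishes, but it is not needed for the argument, since you only require the single chosen dominating orbit to be free---which you correctly flag as something to verify directly.
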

The tensors here are~$T = \nurmievT_1$ and~$T' = \nurmievT_2$ from Nurmiev's classification of unstable tensors.
To the best of our knowledge, this phenomenon was previously unknown.

To show that the tensors described in~\cref{thm:explicit non-free in balanced format} are not free, we give a new ``reduction'' developed using tools from geometric invariant theory.
Of vital importance in this criterion is the~\emph{moment map}.
This is a map~$\mu\colon \C^n \otimes \C^n \otimes \C^n \setminus \{0\}\to \Herm_{n,n,n}$, taking a non-zero tensor and outputting a triple of Hermitian~$n \times n$ matrices.
We will elaborate further on the moment map below.
The points~$S$ in~$\overline{\G \cdot T} \setminus \{0\}$ for which~$\norm{\mu(S)}$ is minimal play an important role in analyzing (semi)stability in geometric invariant theory, and as we show below, they are also relevant to (non-)freeness of~$T$:
\begin{restatable}{theorem}{freereductionK}
    \label{thm:free-tensor-has-free-tensor-in-minimal-K-orbit}
    Suppose $T$ is a free tensor, and let $S \in \closure{\G \cdot T} \setminus \{0\}$ such that $\norm{\mu(S)}$ is minimal.
    Then there exists $k \in \K$ such that $k \cdot S$ has free support.
\end{restatable}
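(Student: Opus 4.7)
The plan is to produce a minimum-$\|\mu\|$ point of $\overline{\G \cdot T}$ that \emph{already} has free support, and then invoke uniqueness of the minimum-norm $\K$-orbit to conclude the statement for an arbitrary minimizer $S$. To begin, since $T$ is free we pick $g \in \G$ so that $T' := g \cdot T$ has free support. Then $\mu(T')$ is diagonal: the off-diagonal entries of the first reduced density matrix are of the form $\sum_{j,k} T'_{ijk} \overline{T'_{i'jk}}$, which vanish for $i \neq i'$ because freeness forbids two support elements from agreeing on both the second and third coordinate, and the other two factors are analogous.

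Let $F \subseteq \C^n \ot \C^n \ot \C^n$ be the linear subspace spanned by $\{e_i \ot e_j \ot e_k : (i,j,k) \in \supp(T')\}$. Every $v \in F$ has support contained in $\supp(T')$, so $\mu(v)$ is diagonal; and the gradient of $\|\mu\|^2$ at $v$ is (up to a positive constant) the infinitesimal $\G$-action of $\mu(v) \in \Herm_{n,n,n}$ on $v$, which merely rescales each basis tensor $e_i \ot e_j \ot e_k$ and therefore preserves $F$. Hence $F$ is invariant under the gradient flow of $-\|\mu\|^2$, and starting from $T' \in F$ Kirwan's Morse theory for $\|\mu\|^2$ (passing to the projectivisation, or using coerciveness of $\|\mu\|^2$ along the flow) guarantees convergence to a critical point $S' \in F$, which has free support. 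Kirwan's stratification moreover assigns to each $v \in V$ a label $\beta(v)$ in the positive Weyl chamber which is $\G$-invariant, equals the minimum-norm point of the moment polytope of $\overline{\G \cdot v}$, and on whose associated critical set the flow from $v$ terminates (with $\|\mu\|$ equal to $\|\beta(v)\|$ there). Applied to $v = T' \in \G \cdot T$ this gives $\beta(T') = \beta(T)$, so $\|\mu(S')\| = \|\beta(T)\| = \min_{S \in \overline{\G \cdot T} \setminus \{0\}} \|\mu(S)\|$; thus $S'$ is a $\|\mu\|$-minimizer with free support.

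Finally, by Ness's theorem the set of $\|\mu\|$-minimizers in $\overline{\G \cdot T} \setminus \{0\}$ forms a single $\K$-orbit, so for the given $S$ there exists $k \in \K$ with $k \cdot S = S'$, whence $k \cdot S$ has free support. The most delicate step in this argument is the identification of the gradient-flow limit with a \emph{global} minimum of $\|\mu\|$ on $\overline{\G \cdot T}$: this rests on the $\G$-invariance of Kirwan's label and on its equality with the minimum-norm point of the moment polytope. The remaining ingredients are standard: invariance of $F$ under the flow is immediate from the diagonality of $\mu$ on free-supported tensors, and the uniqueness of the minimum-norm $\K$-orbit is Ness's theorem.
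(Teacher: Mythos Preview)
Your argument is essentially the paper's: the gradient flow from a free-supported $T$ moves only in diagonal (torus) directions and hence preserves the support, so its projective limit is a free-supported $\|\mu\|$-minimizer, and then uniqueness of the minimum-norm $\K$-orbit transfers this to the given $S$. One caveat the paper explicitly flags: what you invoke as ``Ness's theorem'' for minimizers in the orbit \emph{closure} is actually the extension due to Georgoulas--Robbin--Salamon (their Thm.~6.5); Ness's original result assumes the minimizers lie in $\G \cdot T$ itself, whereas here both your limit $S'$ and the given $S$ are a priori only in $\overline{\G \cdot T}$.
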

To prove this theorem we rely on classical results of geometric invariant theory, in particular results of Ness~\cite{nessStratificationNullCone1984}.
However, we also depend on more modern (analytically-minded) approaches, in particular the one taken by Georgoulas, Robbin and Salamon in~\cite{georgoulasMomentWeightInequalityHilbert2021}.
In particular we heavily rely on properties of the gradient flow of the Kempf--Ness function, combined with an observation regarding moment map evaluations for tensors with free support~\cite{sjamaarConvexityPropertiesMoment1998,franz2002}.

\subsubsection*{Motivation}
We now elaborate on the role of freeness in the theory of moment polytopes and asymptotic spectra \cite{franz2002, strassenKomplexitatUndGeometrie2005}.

We first make the link between freeness and the moment map explicit.
The moment map~$\mu\colon \C^n \ot \C^n \ot \C^n \setminus \{0\} \to \Herm_{n,n,n}$ has the following concrete description.
If~$T$ is normalized, then the~$(i,i')$-th entry of first component of~$\mu(T)$ records the Frobenius inner product between slices~$i$ and~$i'$ of~$T$. The second and third components of~$\mu$ are given by inner products of ``row-slices'' and ``column-slices'', respectively.
In particular the diagonal entries of each component of~$\mu(T)$ are simply given by the squared~$\ell_2$-norm of the corresponding slice, row or column of~$T$ (after normalizing~$T$).
Freeness of the support of a tensor implies that distinct slices/rows/columns have disjoint support, hence have zero inner product, leading to the following result~\cite{sjamaarConvexityPropertiesMoment1998,franz2002}:
\begin{proposition*}
If~$\supp(T)$ is free, then~$\mu([T])$ is a triple of~\emph{diagonal} matrices.
\end{proposition*}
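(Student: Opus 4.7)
The plan is to unfold the coordinate description of the moment map given in the paragraph preceding the proposition, and then read off the conclusion directly from the definition of freeness. After normalizing $T$ so that $\|T\|^2 = 1$, the three components of $\mu([T])$ are the reduced density matrices, whose entries are the Frobenius inner products of slices along each tensor factor. Concretely, the $(i,i')$-entry of the first component is
\[
(\mu([T]))_{1,\,i,i'} \;=\; \sum_{j,k \in [n]} \overline{T_{i',j,k}}\, T_{i,j,k},
\]
and analogously for the second and third components using row-slices and column-slices respectively. The diagonal entries are (as already noted) the squared $\ell_2$-norms of the respective slices, so the only question is whether the off-diagonal entries vanish.

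For this I would fix $i \neq i'$ and observe that a nonzero contribution to the above sum requires some $(j,k) \in [n]^2$ such that both $(i,j,k)$ and $(i',j,k)$ lie in $\supp(T)$. But any two such support elements differ only in the first coordinate, contradicting freeness. Hence the off-diagonal $(i,i')$-entry of the first component of $\mu([T])$ vanishes. Applying the identical argument to the other two tensor factors --- with "differ in only the second coordinate" and "differ in only the third coordinate" respectively playing the role of the forbidden configuration --- shows that all three components of $\mu([T])$ are diagonal, as desired. There is no real obstacle to overcome here: once the coordinate formula for $\mu$ is in place, the proposition is essentially an immediate restatement of freeness in terms of slice overlaps.
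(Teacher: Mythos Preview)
Your argument is correct and essentially identical to the paper's own proof (given as \cref{lem:free-implies-diagonal-moment-map}): write out the coordinate formula for the off-diagonal entries of each component of~$\mu$, and observe that any nonzero term would force two support elements differing in a single coordinate, contradicting freeness. The only cosmetic difference is that you normalize~$T$ upfront whereas the paper carries the~$1/\norm{T}^2$ factor.
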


Freeness of tensors plays a role in giving inner approximations of \emph{moment polytopes}~\cite{sjamaarConvexityPropertiesMoment1998,franz2002}.
The moment map is in fact a moment map in the symplectic geometry sense for the natural action of~$\K = \U_n \times \U_n \times \U_n$ on the projective space~$\Proj(\C^n \ot \C^n \ot \C^n)$.
For~$[T] \in \Proj(\C^n \ot \C^n \ot \C^n)$, the image of~$\mu$ on~$\overline{\G \cdot [T]} \subseteq \Proj(V)$ becomes a convex polytope in~$\R^n \times \R^n \times \R^n$ after restricting the image to diagonal matrices with weakly increasing diagonal entries.
This polytope is called the moment polytope of~$T$, and we will denote it by~$\Delta(T)$.
(We will more formally define the moment map and moment polytope in the main text.)

Franz used the notion of freeness to determine the~$3 \times 3 \times 3$ \emph{Kronecker polytope}: this is the largest possible moment polytope for tensors of shape $3 \times 3 \times 3$, and can alternatively be defined as the closure of all normalized triples~$\frac{1}{k} (\lambda,\mu,\nu)$ such that~$\lambda, \mu, \nu \vdash_3 k$ are partitions of~$k$ into at most~$3$ parts, with the condition that the Kronecker coefficient~$g_{\lambda,\mu,\nu} \neq 0$.
He established an alternative description of the moment polytope method, which lends itself well to obtaining outer approximations.
Next, he verified tightness of his outer approximation of the~$3 \times 3 \times 3$ Kronecker polytope by constructing all its vertices using tensors with free support.
Franz's method for obtaining outer approximations on moment polytopes is also highly relevant to our paper.
Sjamaar~\cite{sjamaarConvexityPropertiesMoment1998} considered moment polytopes in the general setting of a reductive group acting on a representation, and used a slightly more general definition (equivalent to freeness in the case of the tensor action) to provide inner approximations for the image of the moment map.
By acting with triples of invertible diagonal matrices on~$T$ (which does not affect the support), one can achieve the following lower bound on~$\Delta(T)$:
\begin{proposition*}[{\cite[Lem.~7.1]{sjamaarConvexityPropertiesMoment1998}}]
    If~$T$ has free support, then~$\Delta(T)$ contains~$p_{\scalebox{0.5}{$\searrow$}}$ for every~$p \in \conv \{ (e_i \sep e_j \sep e_k) \mid (i,j,k) \in \supp(T) \}$, where~$p_{\scalebox{0.5}{$\searrow$}}$ is obtained from~$p = (p_1 \sep p_2 \sep p_3)$ by putting each~$p_\ell$ in non-increasing order.
\end{proposition*}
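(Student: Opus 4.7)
The plan is to apply the Atiyah--Guillemin--Sternberg convexity theorem to the maximal compact torus sitting inside $\D^3$, acting on the torus-orbit closure of $[T]$ in $\Proj(\C^\gldim \ot \C^\gldim \ot \C^\gldim)$, and then to use permutations in $\K^3$ to sort the resulting diagonal moment map outputs in non-increasing order.

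First I would observe that the diagonal torus $\D^3 \subseteq \G^3$ preserves the support of any tensor, since it merely rescales entries. Hence every point in $\overline{\D^3 \cdot [T]}$ still has free support, and so by the previous proposition its moment map image is a triple of diagonal matrices. Restricting attention to the maximal compact subtorus $\mathbb{T} \subseteq \D^3$, the restriction of $\mu$ to $\overline{\mathbb{T} \cdot [T]}$ therefore takes values in $\R^\gldim \times \R^\gldim \times \R^\gldim$, once we identify diagonal matrices with their diagonal vectors.

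Next, I would invoke AGS convexity: the $\mathbb{T}$-moment map image of $\overline{\mathbb{T} \cdot [T]}$ equals the convex hull of the $\mu$-images of the $\mathbb{T}$-fixed points lying in that closure. The $\mathbb{T}$-fixed points of $\Proj(\C^\gldim \ot \C^\gldim \ot \C^\gldim)$ are precisely the classes $[e_i \ot e_j \ot e_k]$, and $\mu$ sends such a class to the diagonal triple $(e_i \sep e_j \sep e_k)$. Those fixed points contained in $\overline{\mathbb{T} \cdot [T]}$ are exactly those indexed by $\supp(T)$: any $(i,j,k) \in \supp(T)$ is reached by degenerating $[T]$ via a suitable one-parameter subgroup of $\D^3$ that suppresses all other support indices, while support preservation rules out any others. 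Hence the $\mathbb{T}$-moment polytope of $[T]$ equals $\conv\{(e_i \sep e_j \sep e_k) : (i,j,k) \in \supp(T)\}$.

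Finally, given any $p$ in this convex hull, I would pick $S \in \overline{\mathbb{T} \cdot [T]} \subseteq \overline{\G^3 \cdot [T]}$ with $\mu(S)$ the diagonal triple of diagonal $p$, and then act on $S$ by a triple of permutation matrices in $\K^3$ so as to reorder each component of $\mu(S)$ in non-increasing order; the result still lies in $\overline{\G^3 \cdot [T]}$, and by the definition of $\Delta(T)$ this yields $p_\searrow \in \Delta(T)$. The main obstacle will be the clean invocation of AGS for the possibly-singular invariant projective subvariety $\overline{\mathbb{T} \cdot [T]}$; this can be handled either by appealing to the version of the convexity theorem valid for compact invariant subvarieties of Kähler Hamiltonian spaces, or by a direct argument using the explicit formula for $\mu$ in terms of slice inner products, which for free-support tensors reduces the statement to the fact that torus scaling realizes every marginal triple in the relative interior of the convex hull of $\supp(T)$, with the boundary then obtained by taking closures.
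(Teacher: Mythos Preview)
Your argument is correct in substance, and it matches the spirit of the one-line justification the paper gives just before stating the proposition (``By acting with triples of invertible diagonal matrices on~$T$ (which does not affect the support), one can achieve the following lower bound on~$\Delta(T)$''). The paper does not supply a detailed proof of its own; the result is simply cited from Sjamaar. Invoking AGS convexity for the compact torus on the torus-orbit closure, together with the earlier proposition that free support forces a diagonal moment-map image, is exactly the standard route.

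One small imprecision worth noting: your claim that the $\mathbb{T}$-fixed points in $\overline{\mathbb{T}\cdot[T]}$ are \emph{exactly} the $[e_i\otimes e_j\otimes e_k]$ with $(i,j,k)\in\supp(T)$ is not quite right. The torus-orbit closure is the toric variety associated with the Newton polytope $\conv\{(e_i\sep e_j\sep e_k):(i,j,k)\in\supp(T)\}$, and its $\mathbb{T}$-fixed points correspond to the \emph{vertices} of that polytope, not to all support points. A support point sitting in the relative interior of a positive-dimensional face cannot be isolated by any one-parameter subgroup. This does not damage the conclusion, since AGS then returns the convex hull of the vertex images, which is the full polytope $\conv\supp(T)$ regardless. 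Also, a notational remark: in this paper $\G$, $\K$, $\D$ already denote the threefold products, so you should write $\D$ rather than $\D^3$, etc.
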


For free tensors it is known that two classes of functions that are central in the theory of asymptotic spectra (Strassen's dual theory to asymptotic rank) the support functionals and quantum functionals, coincide \cite{christandlUniversalPointsAsymptotic2021}.
The quantum functionals are given by the maximum value of certain entropic quantities over the moment polytope of a tensor, whereas the support functionals are given by optimizing the same entropic quantities over all distributions on supports that a tensor can attain upon basis changes.
It is not known whether the quantum and support functionals agree for non-free tensors.
Better understanding of the class of non-free tensors can help settle this question, and in particular explicitly constructed non-free tensors could potentially be used for separations.
We do note that the non-free tensors we construct do not appear to yield such a separation, based on numerical evidence for~$n=3,4$.

\subsubsection*{Organization of the paper}
\Cref{section:preliminaries} discusses preliminaries from geometric invariant theory.
In \cref{section:moment-polytope-criterion-freeness} we give a criterion for (non-)freeness in terms of the moment polytope. 
In \cref{section:explicit-non-free-333}, we give two examples (denoted $\nurmievT_2$ and $\nurmievT_5$) of concise tensors in $\C^3 \ot \C^3 \ot \C^3$ which are not free.
In \cref{section:explicit-non-free-nnn}, we generalize one of the two examples ($\nurmievT_2)$ to an explicit family of concise non-free tensors in $\C^\gldim \ot \C^\gldim \ot \C^\gldim$ for arbitrary~$\gldim \geq 3$.
In~\cref{section:general-moment-polytope-criterion-freeness} we state and prove a (non-)freeness criterion for general rational actions of connected reductive groups.

\section{Moment map and moment polytopes}
\label{section:preliminaries}

\subsubsection*{Notation}
For natural numbers~$n \geq 1$ we write~$[n] = \{1, \dotsc, n\}$.
We abbreviate~$\GL_n = \GL_n(\C)$.
We will work with tensors in~$\C^a \ot \C^b \ot \C^c$ for~$a,b,c \geq 1$. 
There are a number of natural groups acting on this vector space.
For us of primary importance are products of the general linear group $\GL_a \times \GL_b \times \GL_c$, which we will denote by~$\G$ when~$a,b,c$ are clear from context, and its subgroup~$\U \coloneqq \U_a \times \U_b \times \U_c \subseteq \GL$ consisting triples of unitary matrices.
The action of~$(A,B,C) \in \G$ on~$T \in \C^a \ot \C^b \ot \C^c$ is given by
\[
    (A,B,C) \cdot T \coloneqq (A \ot B \ot C) T.
\]
The space of Hermitian~$a \times a$ matrices is denoted by~$\Herm_a$, and we write 
\[
\Herm_{a,b,c} \coloneqq \Herm_a \times \Herm_b \times \Herm_c.
\]

\subsubsection*{Moment map and moment polytopes}
We now discuss the moment map associated to the action of~$\G$ on tensors.
We begin with the geometric characterization of the moment map (see also \cite{burgisser2018tensorScaling,burgisserTheoryNoncommutativeOptimization2019} for an alternative introduction).
Recall that we defined the subgroup of unitary matrices as $\K = \U_a \times \U_b \times \U_c \subseteq \G$.
We will write~$V = \C^a \ot \C^b \ot \C^c$, and endow it with the~$\K$-invariant inner product~$\langle T, S \rangle = \sum_{i \in [a], j \in [b], k \in [c]} \overline{T_{i,j,k}} S_{i,j,k}$ (note that our inner products are~$\C$-linear in the second argument, and conjugate-linear in the first argument).

\begin{definition}[Moment map for tensors]
  \label{def:moment-map-tensors}
  The moment map~$\mu\colon V \setminus \{0\} \to \Herm_{a,b,c}$ is defined as follows.
  Consider~$\tensor \in V \setminus \{0\}$ as a map~$T_1\colon (\C^{a})^* \to \C^{b} \ot \C^{c}$, and identify~$(\C^{a})^* \cong \C^{a}$ using the standard (complex) inner product, and similar for $\C^{b} \ot \C^{c}$.
  Then
  \begin{equation}
    \label{eq:tensor-moment-map}
    \mu_1(\tensor) = \frac{1}{\norm{T}^2} T_1^* T_1^{\phantom{*}} \in \C^{a \times a},
  \qquad\text{i.e.}\quad
    \mu_1(\tensor)_{i,\ell} = \frac{1}{\norm{T}^2} \sum_{j \in [b],k \in [c]} \overline{\tensor_{i,j,k}} \tensor_{\ell,j,k},
  \end{equation}
  and similarly for the other two components.
\end{definition}
\begin{remark}
  The moment map is invariant under non-zero scalar multiplication of the argument: for~$\lambda \in \C^{\times}$, $\mu(\lambda T) = \mu(T)$.
  Therefore~$\mu$ also induces a well-defined map~$\mu\colon \Proj(V) \to \Herm_{a,b,c}$ on the projective space~$\Proj(V)$ consisting of lines through the origin in~$V$. 
\end{remark}
\begin{remark}
  The moment map as defined above is a moment map in the sense of symplectic geometry for the~$\U$-action on~$\Proj(V)$, see e.g.~\cite{nessStratificationNullCone1984,burgisser2018alternatingMinimization}.
\end{remark}
\begin{lemma}%
    \label{lemma:tensor moment map equivariance}
    For every $(A,B,C) \in \K$, $\mu\big((A,B,C)\cdot T\big)
    = (A \mu_1(T) A^{-1}, B \mu_2(T) B^{-1}, C \mu_3(T) C^{-1})$.
\end{lemma}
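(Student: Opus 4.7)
The statement is the standard equivariance of a moment map with respect to its compact group action, so the plan is to verify it by a direct calculation from \Cref{def:moment-map-tensors}, exploiting unitarity of $(A,B,C)$ to collapse everything cleanly.

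First, since $(A,B,C) \in \K$ is a triple of unitaries, the action preserves the Hermitian inner product on~$V$ and hence the norm: $\|(A,B,C)\cdot T\| = \|T\|$. Therefore the normalization factor $1/\|T\|^2$ in the definition of~$\mu$ is~$\K$-invariant, and it suffices to prove the identity for the unnormalized quantities $T_1^* T_1$ (and the analogous $2$- and $3$-flattenings). Next, I would express the first flattening of $T' \coloneqq (A,B,C)\cdot T$ in terms of that of $T$. Unwinding the definition of the action on basis elements gives, as a map $\C^a \to \C^b \ot \C^c$,
\[
T'_1 \;=\; (B \ot C)\, T_1\, A^{-1},
\]
where the $A^{-1}$ arises because $A$ acts on the first tensor factor while $T_1$ takes the \emph{dual} of that factor as input (and for unitary $A$ this equals $A^*$). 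Substituting,
\[
(T'_1)^* T'_1 \;=\; A\, T_1^*\, (B \ot C)^* (B \ot C)\, T_1\, A^{-1} \;=\; A\, T_1^* T_1\, A^{-1},
\]
using that $(B\ot C)^*(B\ot C) = I$ by unitarity of $B$ and $C$. Dividing by $\|T\|^2$ yields $\mu_1(T') = A\,\mu_1(T)\,A^{-1}$, and the arguments for $\mu_2$ and $\mu_3$ are identical (or follow by symmetry of the three tensor factors).

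There is no real obstacle in this proof; the only mild subtlety is bookkeeping of transposes/conjugates when passing through the identification $(\C^a)^*\cong\C^a$, but unitarity of the acting matrices collapses all such variants into the same formula. Conceptually, one can also view this as the familiar covariance of reduced density matrices under local unitaries: up to the chosen identification, $\mu_i(T)$ is essentially the $i$-th reduced density matrix of the pure state $T/\|T\|$, which transforms as $\rho \mapsto U\rho U^*$, and $U^* = U^{-1}$ on $\K$.
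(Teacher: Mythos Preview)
Your proof is correct and follows essentially the same route as the paper: both note that $\norm{T}^2$ is $\K$-invariant, write the first flattening of $(A,B,C)\cdot T$ as $(B\ot C)\,T_1\,A^{-1}$ (the paper writes $A^*$), compute $(T'_1)^*T'_1 = A\,T_1^*T_1\,A^{-1}$ using unitarity of $B\ot C$, and appeal to symmetry for the other two components.
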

\begin{proof}
    The first component of~$\mu$ is given by~$T_1^* T_1 / \norm{T}^2$.
    Clearly~$\norm{T}^2$ is~$\K$-invariant, and $((A,B,C) T)_1 = (B \ot C) T_1 A^*$, so that
    \[
        ((A,B,C) T)_1^* ((A,B,C) T)_1 = A T_1^* (B \ot C)^* (B \ot C) T_1 A^* = A T_1^* T_1 A^{-1}
    \]
    since~$A, B, C$ are unitary matrices.
    The proof is analogous for the other components of~$\mu$.
\end{proof}

We now state a connection between freeness of tensors and the moment map.
\begin{lemma}[{\cite[Lem.~7.1]{sjamaarConvexityPropertiesMoment1998}, \cite[Prop.~2.2]{franz2002}}]
  \label{lem:free-implies-diagonal-moment-map}
  Suppose $T \in V \setminus \{0\}$ is a tensor with free support.
  Then $\mu(T)$ is a tuple of diagonal matrices.
\end{lemma}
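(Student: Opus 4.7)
The plan is to unpack the explicit formula for the moment map from \Cref{def:moment-map-tensors} and observe that freeness of the support forces the off-diagonal sums to vanish termwise. I would argue for~$\mu_1$ only and then invoke symmetry for~$\mu_2$ and~$\mu_3$.

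First I would fix indices~$i \neq \ell$ in~$[a]$ and write out
\[
    \mu_1(T)_{i,\ell} = \frac{1}{\norm{T}^2} \sum_{j \in [b],\, k \in [c]} \overline{T_{i,j,k}} T_{\ell,j,k}.
\]
To see that this sum is zero, I would show that every individual summand vanishes. For this, fix~$(j,k) \in [b] \times [c]$ and note that the triples~$(i,j,k)$ and~$(\ell,j,k)$ agree in the last two coordinates and differ only in the first. Since~$i \neq \ell$, these are distinct multi-indices differing in exactly one coordinate, so by the definition of a free set at most one of them lies in~$\supp(T)$. Consequently at least one of~$T_{i,j,k}$ and~$T_{\ell,j,k}$ is zero, so~$\overline{T_{i,j,k}} T_{\ell,j,k} = 0$. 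Summing over~$(j,k)$ gives~$\mu_1(T)_{i,\ell} = 0$ for all~$i \neq \ell$, which is exactly the statement that~$\mu_1(T)$ is diagonal.

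The same reasoning applies verbatim to the other two components: for~$\mu_2(T)_{j,m}$ with~$j \neq m$, the relevant index pairs are~$(i,j,k)$ and~$(i,m,k)$, which again differ in only one coordinate and hence cannot both be supported; analogously for~$\mu_3$. There is no real obstacle here — the only content is matching the combinatorial freeness condition (differing in at least two coordinates) to the algebraic condition needed to kill each inner product termwise. I would keep the proof to a few lines and emphasize that the permutation symmetry among the three tensor factors is what lets us dispatch~$\mu_2$ and~$\mu_3$ without repeating the calculation.
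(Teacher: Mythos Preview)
Your proposal is correct and essentially identical to the paper's own proof: both expand the explicit formula for~$\mu_1(T)_{i,\ell}$, observe that each summand~$\overline{T_{i,j,k}} T_{\ell,j,k}$ vanishes because~$(i,j,k)$ and~$(\ell,j,k)$ differ in exactly one coordinate, and then appeal to symmetry for~$\mu_2$ and~$\mu_3$.
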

\begin{proof}
  We establish that~$\mu_1(T)$ is diagonal, the proof for~$\mu_2(T)$ and~$\mu_3(T)$ is analogous.
  Observe that for~$i,j \in [a]$, expanding~\cref{eq:tensor-moment-map} yields
  \[
    (\mu_1(T))_{i,j} = \frac{1}{\norm{T}^2} \sum_{k = 1}^b \sum_{\ell=1}^c \overline{T_{i,k,\ell}} T_{j,k,\ell}.
  \]
  If~$T$ has free support, then~$\overline{T_{i,k,\ell}} T_{j,k,\ell} = 0$ whenever~$i \neq j$, because then~$(i,k,\ell)$ and~$(j,k,\ell)$ differ in exactly one coordinate.
  This shows that~$(\mu_1(T))_{i,j} = 0$  whenever~$i \neq j$, implying~$\mu_1(T)$ is a diagonal matrix.
\end{proof}

The next object of interest is the \emph{moment polytope}.
Let $\weylchamber_n \coloneqq \big\{\lambda \in \R^{n} \mid \lambda_{1} \geq \lambda_{2} \geq \cdots \geq \lambda_{n}\big\}$ be the non-increasing vectors and let $\weylchamber \coloneqq \weylchamber_a \times \weylchamber_b \times \weylchamber_c$.
The set~$\weylchamber$ will be referred to as the \emph{positive Weyl chamber} of~$\GL$.
We will often identify~$\weylchamber$ with the space of triples of real non-increasingly ordered diagonal matrices.
\begin{definition}[Moment polytope]
  Let~$T \in V \setminus \{0\}$.
  The moment polytope~$\Delta(T)$ of~$T$ is defined by
  \[
    \Delta(T) \coloneqq \mu\big(\overline{\G \cdot [T]}\big) \cap \weylchamber.
  \]
  where~$[T] \in \Proj(V)$ denotes the projective equivalence class of~$T$.
  Here~$\overline{\G \cdot [T]}$ is the closure\footnote{The closure is the same irrespective of whether one works with the Euclidean or Zariski topology on~$\Proj(V)$, see e.g.~\cite[Lem.~3.1]{wallachGeometricInvariantTheory2017}.} of the~$\G$-orbit of~$[T]$.
\end{definition}
\begin{remark}
  Observe that for every~$T \in V \setminus \{0\}$, there exist~$(A,B,C) \in \K$ such that~$\mu((A,B,C) \cdot T) \in \weylchamber$; after all, by~\cref{lemma:tensor moment map equivariance}, it suffices to pick~$A,B,C$ that diagonalize~$\mu_1(T), \mu_2(T), \mu_3(T)$ respectively, such that the eigenvalues are sorted non-increasingly. 
  If one writes~$\spec(H)$ for the non-increasingly ordered spectrum of a Hermitian matrix~$H$, then we obtain
  \[
    \Delta(T) = \Big\{\, \big(\spec\mu_1([S]), \spec\mu_2([S]), \spec\mu_3([S]) \big) \ \big|\ [S] \in \overline{\GL \cdot [T]}\, \Big\}.
  \]
\end{remark}
\begin{remark}
  One can also work affinely instead of projectively, or take the closure after taking the image of~$\mu$, leading to
  \[
    \Delta(T) = \mu\big(\overline{\G \cdot T} \setminus \{0\}\big) \cap \weylchamber = \overline{\mu(\G \cdot T)} \cap \weylchamber.
  \]
\end{remark}

The following remarkable theorem justifies the name of~$\Delta(T)$: %
\begin{theorem}[{\cite{nessStratificationNullCone1984,brion1987momentMapImage,walterEntanglementPolytopes2013}}]
    $\Delta(T)$ is a bounded convex polytope with rational vertices.
\end{theorem}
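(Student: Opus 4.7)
The plan is to decompose the statement into three assertions—boundedness, rational polyhedrality, and convexity—each proved by different means, with the cited works providing the deepest input.

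Boundedness is elementary. The projective space $\Proj(V)$ is compact in the Euclidean topology, so the orbit closure $\overline{\G \cdot [T]}$ is compact as a closed subset. The moment map $\mu$ is continuous on $\Proj(V)$ (the denominator $\norm{T}^2$ never vanishes on representatives), hence $\mu(\overline{\G \cdot [T]})$ is compact. Its intersection with the closed set $\weylchamber$ is therefore compact, and in particular bounded.

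For rational polyhedrality I would invoke the representation-theoretic description of moment polytopes due to Mumford and Brion: a rational point $(\lambda^{(1)},\lambda^{(2)},\lambda^{(3)})/k \in \weylchamber$ with integer dominant triples $\lambda^{(i)}$ belongs to $\Delta(T)$ if and only if the irreducible $\G$-representation of highest weight $(\lambda^{(1)},\lambda^{(2)},\lambda^{(3)})$ occurs inside the degree-$k$ homogeneous component of the coordinate ring of the affine cone over $\overline{\G \cdot [T]}$. Since $\G$ is reductive and this graded coordinate ring is finitely generated, the set of admissible $(\lambda^{(1)},\lambda^{(2)},\lambda^{(3)},k)$ forms a finitely generated sub-semigroup of the weight lattice crossed with $\Z_{\geq 0}$. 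Slicing this semigroup by a fixed scaling and taking the rational convex hull yields a rational polytope; the vertices descend from the semigroup generators and are therefore rational.

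Convexity is the hard step. In the abelian case it reduces to the classical Atiyah–Guillemin–Sternberg theorem on torus moment map images. For the non-abelian action of $\G$ one needs Kirwan's non-abelian convexity theorem, which states precisely that $\mu(\overline{\G \cdot [T]}) \cap \weylchamber$ is convex; the equivalent results of Ness in the GIT setting and the Kempf--Ness-based treatment of Walter et al.\ provide alternative routes and are already cited in the statement. Combining convexity with rational polyhedrality gives the desired description of $\Delta(T)$ as a bounded convex rational polytope. The main technical obstacle is thus the non-abelian convexity input rather than the finiteness or compactness arguments, and in this paper it would be cited rather than reproved from scratch.
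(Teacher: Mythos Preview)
The paper does not prove this theorem at all: it is stated with attribution to \cite{nessStratificationNullCone1984,brion1987momentMapImage,walterEntanglementPolytopes2013} and immediately used as a black box, with only a one-line remark afterward that the moment polytope also admits a representation-theoretic description. So there is no ``paper's own proof'' to compare against; your proposal supplies strictly more content than the paper does.

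That said, your sketch is a faithful outline of how the cited works establish the result, with one step that deserves more care. Boundedness via compactness of $\Proj(V)$ is fine, and invoking Ness/Kirwan for convexity is exactly what the citations are for. For rational polyhedrality, however, the passage ``since this graded coordinate ring is finitely generated, the set of admissible $(\lambda,k)$ forms a finitely generated sub-semigroup'' is where the real work hides: finite generation of the coordinate ring does not by itself imply finite generation of the highest-weight semigroup, because products of highest weight vectors need not be highest weight vectors. The standard route (Brion) goes through finite generation of the algebra of $U$-invariants for a maximal unipotent $U$, which is a nontrivial theorem of Had\v{z}iev/Grosshans type; only then does the semigroup argument go through. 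If you want your sketch to be self-contained rather than another citation, that is the gap to fill.
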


The moment polytope also has a characterization in terms of representation theory \cite{nessStratificationNullCone1984,brion1987momentMapImage,christandlUniversalPointsAsymptotic2021,burgisser2018tensorScaling}. 
This characterization exhibits the moment polytope as the set of normalized highest weights of $\G$ whose associated irreducible representations occur in the coordinate ring of $\overline{\G \cdot T}$.
Below, we will discuss another description of the moment polytope, and we refer to e.g.\ \cite{vandenBerg2025momentPolytopeAlgorithm} for a detailed discussion of all three different perspectives.

\subsubsection*{Moment polytopes via supports}
We now consider an alternative description of the moment polytope due to~\cite{franz2002}.
It exhibits the moment polytope as the intersection of the convex hulls of the possible supports of the tensor attainable under the action of lower triangular matrices.

Let $\G_{\lowertriangular} \subseteq \G$ be the triples of lower triangular matrices and $\G_{\uppertriangular} \subseteq \G$ the triples of upper triangular matrices.
The indices for the entries of~$T$ can be identified with triples of standard basis vectors, by identifying~$(i,j,k) \in [a] \times [b] \times [c]$ with~$(e_i, e_j, e_k) \in \R^a \times \R^b \times \R^c$.
We will then write~$\conv\supp(T)$ for the convex hull of those~$(e_i, e_j, e_k)$ for which~$T_{i,j,k} \neq 0$.\footnote{In a more general setting, the support is the set of \emph{weights} of the representation for which the projection of~$T$ onto the associated weight space is non-zero. The weights and the moment polytope can be realized in the same space, namely the (dual of the) Lie algebra of a maximal algebraic torus in~$G$.}
We then have the following description of~$\Delta(T)$:
\begin{theorem}[{\cite{franz2002}}]\label{theorem:tensor moment polytopes franz}
For every $U \in \GL_{\uppertriangular}$, we have the inclusion
\[
  \Delta(T) \supseteq \bigcap_{L \in \GL_{\lowertriangular}} \conv \supp(LU \cdot T) \cap \weylchamber,
\]
with equality for generic $U \in \GL_{\uppertriangular}$.
\end{theorem}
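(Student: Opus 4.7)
The plan is to prove both directions using tools from geometric invariant theory: the Hilbert--Mumford numerical criterion for the $\supseteq$ direction, and the representation-theoretic description of $\Delta(T)$ for the equality direction. A key ingredient in both is the Bruhat decomposition of $\G$.

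For the unconditional inclusion $\supseteq$, I would take $\lambda$ in the right-hand side and assume for contradiction that $\lambda \notin \Delta(T)$. Hilbert--Mumford then furnishes a destabilizer: a rational one-parameter subgroup $\xi$ of the maximal torus $\D \subseteq \G$ and an element $g \in \G$ such that
\[
  \max_{\nu \in \supp(g \cdot T)} \langle \xi, \nu \rangle < \langle \xi, \lambda \rangle.
\]
Using the Bruhat decomposition to write $g = p^{-} L U$, with $p^{-}$ in the parabolic $P_\xi$ stabilizing the $\xi$-weight filtration, $L \in \G_\lowertriangular$, and $U \in \G_\uppertriangular$ as fixed, the key observation is that $p^{-}$ shifts support only within level sets of $\langle \xi, \cdot \rangle$, so the maximum of $\langle \xi, \cdot \rangle$ on the support is unchanged when replacing $g$ by $LU$. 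The resulting strict separation $\max_{\nu \in \supp(LU\cdot T)} \langle \xi, \nu\rangle < \langle \xi, \lambda\rangle$ contradicts $\lambda \in \conv\supp(LU \cdot T)$ by linear-programming duality (Hahn--Banach). A Weyl-group adjustment may be required at the outset to put $\xi$ in the right chamber so that the reduction produces $L \in \G_\lowertriangular$ specifically.

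For equality at generic $U$, I would use the representation-theoretic characterization: $\lambda \in \Delta(T)$ iff there exists $d \geq 1$ and a $\G_\uppertriangular$-semi-invariant polynomial $f \in \Sym^d V^*$ of weight $d\lambda$ with $f(T) \neq 0$. Since $U \in \G_\uppertriangular$, $f(U \cdot T) = \chi_{d\lambda}(U) \cdot f(T) \neq 0$ for generic $U$. A standard monomial argument then gives $\lambda \in \conv \supp(L U \cdot T)$ for every $L \in \G_\lowertriangular$: expanding $f(LU \cdot T)$ in the dual monomial basis, each monomial contributing non-trivially pairs with $d$ coefficients of $LU \cdot T$, and by the weight condition the associated multi-indices lie in $\supp(LU \cdot T)$ and sum to $d\lambda$. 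Averaging exhibits $\lambda$ as a convex combination of points in $\supp(LU \cdot T)$. Since $\lambda \in \weylchamber$ by assumption, this gives the desired reverse inclusion.

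The main obstacle is the Bruhat / parabolic reduction in the $\supseteq$ direction: one must verify that any destabilizer $(g, \xi)$ can be conjugated into the specific form $g = LU$ with the \emph{preassigned} $U$, preserving the strict sign of the Mumford weight. This requires careful bookkeeping of how upper-triangular components interact with $\xi$ via the parabolic $P_\xi$, and invoking the Weyl group action to align $\xi$ correctly. A secondary issue in the equality direction is arguing that the $f$ furnished by the representation-theoretic criterion can be chosen so that the generic-$U$ condition $f(U \cdot T) \neq 0$ carries over to $f(LU \cdot T) \neq 0$ for \emph{all} $L$, as opposed to just generic $L$; this ultimately rests on the fact that $\lambda \in \conv\supp(S)$ depends only on the support of $S$, which is upper-semicontinuous under $\G_\lowertriangular$-translation.
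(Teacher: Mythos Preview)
The paper does not give a proof of this theorem; it is quoted from \cite{franz2002} and used as a black box (see the discussion immediately following the statement, which only explains how the theorem is \emph{applied} to obtain upper and lower bounds on $\Delta(T)$). There is therefore no proof in the paper to compare your proposal against.

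That said, a brief comment on your outline. For the inclusion $\supseteq$, note that since $U \in \G$ you have $\Delta(T) = \Delta(U \cdot T)$, so without loss of generality $U = I$; this dissolves the ``preassigned $U$'' issue you flag as the main obstacle. What remains is to show that if $\lambda \in \weylchamber$ lies in $\conv\supp(L \cdot T)$ for every $L \in \G_{\lowertriangular}$, then $\lambda \in \Delta(T)$. Your Hilbert--Mumford/Bruhat sketch is in the right spirit, but the reduction step is still delicate: given a destabilizing pair $(g,\xi)$ with $\xi$ dominant, Bruhat lets you write $g = b_1 w b_2$ with $b_1,b_2$ in a fixed Borel and $w$ a Weyl element, and the parabolic $P_\xi$ absorbs part of this --- but you need to argue carefully that what survives is a single lower-triangular factor $L$ (not an extra Weyl element or upper-triangular piece), and that $\xi$ stays in the correct chamber after this manipulation. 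For the equality direction, your highest-weight-vector argument is the standard one and is essentially what Franz does; the ``secondary issue'' you mention is not a real obstruction, since a $B$-semi-invariant $f$ satisfies $f(L U \cdot T) = \chi(L) f(U \cdot T)$ for every $L$, so nonvanishing at $U \cdot T$ automatically gives nonvanishing at $L U \cdot T$ for all $L$.
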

This theorem provides a method for proving both upper and lower bounds on~$\Delta(T)$.
Indeed, to prove that~$\Delta(T) \subseteq P$ for~$P \subseteq \R^a \times \R^b \times \R^c$, it is sufficient (and even equivalent) to show that
\[
\bigcap_{L \in \GL_{\lowertriangular}} \conv \supp(LU \cdot T) \cap \weylchamber \subseteq P
\]
for~\emph{every}~$U \in \GL_{\uppertriangular}$.
In the other direction, for proving lower bounds~$P \subseteq \Delta(T)$, it suffices to obtain one~$U \in \GL_{\uppertriangular}$ such that~$P \subseteq \bigcap_{L \in \GL_{\lowertriangular}} \conv \supp(LU \cdot T)$.
For our construction of non-free tensors, the upper bound will be the relevant direction (see \cref{subsection:nonfree-tensor-polytope-inequality}).

\section{A moment polytope criterion for non-freeness}
\label{section:moment-polytope-criterion-freeness}

We will now prove~\cref{thm:free-tensor-has-free-tensor-in-minimal-K-orbit}. Recall that $\mu$ denotes the moment map (\cref{def:moment-map-tensors}), $\Delta(T) = \{\mu\big(T'\big) \mid T' \in \overline{\G \cdot T} \setminus\{0\} \} \cap \weylchamber$ the moment polytope, and $\K = \U_a \times \U_b \times \U_c$ the maximal compact subgroup of $\G$ consisting of triples of unitary matrices.
We restate the theorem here for convenience:
\freereductionK*
We will use \cref{thm:free-tensor-has-free-tensor-in-minimal-K-orbit} in the contrapositive: to show that a tensor $T$ is not free, we exhibit an $S$ as in the theorem, such that for no $k \in \K$ the support of $k \cdot S$ is free.
To verify that $\norm{\mu(S)} = \min_{p \in \Delta(T)} \norm{p}$, we use the following criterion.
\begin{lemma}
    \label{lem:ness-checking-minimality}
    Let $T \in V  \setminus \{0\}$. Then the following are equivalent:
    \begin{enumerate}[label=\upshape(\arabic*)]
        \item\label{item:ness minimality minimality} $\norm{\mu(T)} = \min_{p \in \Delta(T)} \norm{p}$,
        \item\label{item:ness minimality stabilizer} there exists some $\lambda \in \R$ such that $\exp(t \mu(T)) \cdot T = e^{\lambda t} T$ for all $t \in \R$.
    \end{enumerate}
\end{lemma}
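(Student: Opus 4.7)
My plan is to prove this as a specialization of Ness's classical moment map minimality criterion \cite{nessStratificationNullCone1984} to the tensor setting. First, I reformulate condition~(2): since $t \mapsto \exp(t\mu(T)) \cdot T$ is analytic in $t$, the equation $\exp(t\mu(T)) \cdot T = e^{\lambda t} T$ is equivalent to the derived Lie-algebra action satisfying $\mu(T) \cdot T = \lambda T$, where $H \cdot T := (H_1 \otimes I \otimes I + I \otimes H_2 \otimes I + I \otimes I \otimes H_3) T$. Taking the inner product with $T$ and using the identity $\langle T, H \cdot T\rangle = \Tr(H \mu(T)) \|T\|^2$ (which follows immediately from \cref{def:moment-map-tensors}) with $H = \mu(T)$ yields $\lambda = \|\mu(T)\|^2 \in \R$. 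By \cref{lemma:tensor moment map equivariance}, $\|\mu(S)\|$ is invariant under the $\K$-action on $S$, so $\inf_{g \in \G} \|\mu(g \cdot T)\| = \min_{p \in \Delta(T)} \|p\|$; thus~(1) is equivalent to saying $F\colon \G \to \R$, $F(g) := \|\mu(g \cdot T)\|^2$, attains its infimum at $g = e$.

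For $(1) \Rightarrow (2)$: minimality of $F$ at $e$ forces $\frac{d}{dt}\big|_{t=0} F(\exp(tH)) = 0$ for every $H \in \Herm_{a,b,c}$. Differentiating $\mu_\ell(\exp(tH) T) = (\exp(tH) T)_\ell^* (\exp(tH) T)_\ell / \|\exp(tH) T\|^2$ and carefully tracking the contribution of the varying normalization $\|\exp(tH) T\|^{-2}$, one obtains
\[
    \tfrac{d}{dt}\Big|_{t=0} F(\exp(tH)) = \tfrac{4}{\|T\|^2}\Bigl(\operatorname{Re}\langle \mu(T) \cdot T, H \cdot T\rangle - \|\mu(T)\|^2 \langle T, H \cdot T\rangle\Bigr).
\]
Substituting $H = \mu(T)$ and using $\langle T, \mu(T) \cdot T\rangle = \|\mu(T)\|^2 \|T\|^2$ gives $\|\mu(T) \cdot T\|^2 = \|\mu(T)\|^4 \|T\|^2$, which is precisely the equality case of Cauchy--Schwarz $|\langle T, \mu(T) \cdot T\rangle|^2 \leq \|T\|^2 \|\mu(T) \cdot T\|^2$. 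This forces $\mu(T) \cdot T = c T$ for some $c \in \C$, and comparing inner products gives $c = \|\mu(T)\|^2 \in \R$, yielding~(2).

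For $(2) \Rightarrow (1)$: I would use the factor-wise polar decomposition $g = k \exp(H)$, with $k \in \K$ and $H \in \Herm_{a,b,c}$, together with \cref{lemma:tensor moment map equivariance} to reduce to showing $\|\mu(\exp(H) \cdot T)\| \geq \|\mu(T)\|$ for $H \in \Herm_{a,b,c}$. The Kempf--Ness function $\varphi_H(t) := \log\|\exp(tH) \cdot T\|^2$ is convex in $t$ with $\varphi_H'(0) = 2 \Tr(H \mu(T))$; condition~(2) makes $\varphi_{\mu(T)}$ globally linear. Combining the convexity of $\varphi_{H + s\mu(T)}(t)$ (a two-parameter version of the Kempf--Ness function) with the Cauchy--Schwarz bound $|\Tr(H \mu(S))| \leq \|H\| \cdot \|\mu(S)\|$ and extracting the asymptotic slope as $t \to \infty$ yields the desired inequality; this is essentially Ness's moment--weight inequality argument. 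The main obstacle is the explicit derivative computation in the $(1) \Rightarrow (2)$ direction: because $\mu$ is only $\K$-equivariant (not $\G$-equivariant), the non-unitary directional derivative requires careful handling of the varying normalization, but the telescoping into a clean Cauchy--Schwarz equality is the key simplification.
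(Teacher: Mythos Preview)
The paper's own proof is a one-line citation to \cite[Thm.~6.1]{nessStratificationNullCone1984}, together with the observation that in the tensor setting $T$ is automatically $\G$-unstable and $\mu(T)\neq 0$ (each component has trace~$1$). You instead attempt to reconstruct the argument directly. Your $(1)\Rightarrow(2)$ direction is correct: the derivative formula for $F$ is right, and setting $H=\mu(T)$ produces exactly the Cauchy--Schwarz equality case, forcing $\mu(T)\cdot T\in\C T$.

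Your $(2)\Rightarrow(1)$ sketch, however, has a gap. The ingredients you list --- convexity of the two-parameter Kempf--Ness function $\varphi_{H+s\mu(T)}$, the Cauchy--Schwarz bound on $\Tr(H\mu(S))$, and ``extracting the asymptotic slope as $t\to\infty$'' --- do not combine into the inequality $\norm{\mu(\exp(H)T)}\ge\norm{\mu(T)}$ in any way I can reconstruct. The asymptotic slope of $\varphi_X$ is twice the maximal $X$-eigenvalue on $\supp(T)$, which bears no direct relation to $\norm{\mu(T)}$; and $g\mapsto\norm{\mu(gT)}^2$ is not convex along rays $t\mapsto\exp(tH)$, so the vanishing of its derivative at $t=0$ (which is all that $(2)$ buys you, by reversing your own $(1)\Rightarrow(2)$ computation) does not yield global minimality. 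Your concluding ``this is essentially Ness's moment--weight inequality argument'' is therefore no more than the citation the paper already makes. If you want a short argument for $(2)\Rightarrow(1)$ within the paper's framework: condition~$(2)$ makes $T(t)=e^{-\lambda t}T$ the solution of the gradient-flow ODE~\eqref{equation:ode}, hence $[T]_\infty=[T]$, and \cref{thm:GRS-gradflow-ODE-properties-tensor} then gives $\norm{\mu(T)}=\min_{p\in\Delta(T)}\norm{p}$ directly.
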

\begin{proof}
The equivalence of (1) and (2) is shown in~\cite[Thm.~6.1]{nessStratificationNullCone1984}, noting that~$T$ is always~$\G$-unstable and~$\mu(T)$ is never zero (since every component has trace~$1$).
\end{proof}
In particular, condition~\ref{item:ness minimality stabilizer} is easily verified for an explicit tensor $T$.

We will now work towards the proof of~\cref{thm:free-tensor-has-free-tensor-in-minimal-K-orbit}.
The first ingredient is the following.
\begin{theorem}[{\cite[Thm.~6.4]{georgoulasMomentWeightInequalityHilbert2021}}]
    \label{thm:GRS-gradflow-ODE-properties-tensor}
    Let $T \in V \setminus \{0\}$.
    Then there is a solution~$T(t) \in V$, where $t \geq 0$, to the ordinary differential equation
    \begin{equation}
        \label{equation:ode}
        \frac{\diff}{\diff t} T(t) = - \bigl( \mu_1(T(t)) \ot I \ot I \,+\, I \ot \mu_2(T(t)) \ot I \,+\, I \ot I \ot \mu_3(T(t)) \bigr) T(t), \quad T(0) = T.
    \end{equation}
    It satisfies~$T(t) \in \G \cdot T$ for all $t \geq 0$.
    Moreover,
    writing~$[S] \in \Proj(V)$ for the equivalence class in projective space of a non-zero tensor~$S \in V \setminus \{0\}$,
    the limit $[T]_\infty := \lim_{t \to \infty} [T(t)] \in \Proj(V)$ exists, and $\norm{\mu([T]_\infty)} = \min_{p \in \Delta(T)} \norm{p}$.
\end{theorem}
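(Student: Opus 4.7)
I would realize $T(t)$ as the negative gradient flow of the Kempf--Ness function on the $\G$-orbit of $T$, and deduce projective convergence and minimality via the real-analytic {\L}ojasiewicz inequality combined with~\cref{lem:ness-checking-minimality}.

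\emph{Local existence and orbit invariance.} Since $\mu$ is smooth on $V \setminus \{0\}$, the right-hand side of~\eqref{equation:ode} is locally Lipschitz, so Picard--Lindel\"of gives a unique local solution $T(t)$. To see $T(t) \in \G \cdot T$, I would lift the flow to the group by letting $g(t) = (A(t), B(t), C(t)) \in \G$ solve the companion ODE $\dot A = -\mu_1(g \cdot T) A$, $\dot B = -\mu_2(g \cdot T) B$, $\dot C = -\mu_3(g \cdot T) C$, with $g(0) = I$. Differentiating $g(t) \cdot T$ shows it satisfies~\eqref{equation:ode}, so by uniqueness $T(t) = g(t) \cdot T$.

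\emph{Monotonicity and global existence.} The defining property of the moment map, $\Re \langle H \cdot S, S\rangle = \norm{S}^2 \sum_i \Tr(H_i \mu_i(S))$ for Hermitian $H = (H_1,H_2,H_3)$, immediately yields
\[
    \tfrac{d}{dt} \log \norm{T(t)}^2 \;=\; -2\, \norm{\mu(T(t))}^2 \;\leq\; 0.
\]
Thus $T(t)$ descends to the negative gradient flow on $\G/\K$ of the real-analytic Kempf--Ness function $F_T(g) := \log \norm{g \cdot T}^2$, which is convex along geodesics of the symmetric space. Projectively, $[T(t)]$ stays inside the compact set $\overline{\G \cdot [T]} \subseteq \Proj(V)$, so no finite-time blow-up can occur and the flow exists for all $t \geq 0$.

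\emph{Convergence and identification of the limit.} The projectivized flow $[T(t)]$ is the (rescaled) negative gradient flow of the real-analytic function $[S] \mapsto \norm{\mu([S])}^2$ on the compact real-analytic variety $\overline{\G \cdot [T]}$, so the real-analytic {\L}ojasiewicz gradient inequality forces convergence to a single limit $[T]_\infty$ (rather than mere convergent subsequences); this is the main analytic difficulty, established in~\cite{georgoulasMomentWeightInequalityHilbert2021} in a more general Hilbert setting. Equilibrium at the limit gives $\exp(t\,\mu([T]_\infty)) \cdot T_\infty = e^{\lambda t} T_\infty$ for some $\lambda \in \R$, so~\cref{lem:ness-checking-minimality} applied to $T_\infty$ identifies $\norm{\mu([T]_\infty)}$ with $\min_{p \in \Delta([T]_\infty)} \norm{p}$. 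Since the gradient flow is monotone in $\norm{\mu}^2$ and by Ness~\cite{nessStratificationNullCone1984} the infimum of $\norm{\mu}^2$ on $\overline{\G \cdot [T]}$ is attained at critical points (all of which achieve the same minimal value), this coincides with $\min_{p \in \Delta(T)} \norm{p}$, completing the argument.
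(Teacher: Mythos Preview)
The paper does not prove this theorem; it is quoted as a black box from \cite[Thm.~6.4]{georgoulasMomentWeightInequalityHilbert2021}, with only the remarks following it explaining why the ODE in~\eqref{equation:ode} is the gradient flow of $\tfrac12\norm{\mu}^2$. So there is no ``paper's own proof'' to compare against beyond the citation itself.

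Your sketch follows the standard route taken in the cited reference: Picard--Lindel\"of for local existence, a group-lift for orbit invariance, compactness of the projective orbit closure for global existence, and the \L ojasiewicz inequality for convergence of the gradient flow of the real-analytic function $\norm{\mu}^2$. This is correct in outline and is indeed how \cite{georgoulasMomentWeightInequalityHilbert2021} proceeds. Two small points are worth tightening. First, the \L ojasiewicz argument should be run on $\Proj(V)$ (or the unit sphere), where $\norm{\mu}^2$ is globally real-analytic; the orbit closure $\overline{\G\cdot[T]}$ may be singular, so invoking \L ojasiewicz ``on'' it is awkward, but this is harmless since the ambient flow already lives in $\Proj(V)$. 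Second, your last step deserves more care: from \cref{lem:ness-checking-minimality} you get $\norm{\mu([T]_\infty)}=\min_{p\in\Delta([T]_\infty)}\norm{p}$, and since $[T]_\infty\in\overline{\G\cdot[T]}$ you have $\Delta([T]_\infty)\subseteq\Delta(T)$, hence $\norm{\mu([T]_\infty)}\geq\min_{p\in\Delta(T)}\norm{p}$. The reverse inequality does not follow from monotonicity of $\norm{\mu(T(t))}$ along the single trajectory alone; one needs either the Ness--Kirwan fact that all critical values of $\norm{\mu}^2$ on a fixed orbit closure coincide with the infimum, or the observation that the flow can be started from any $[S]\in\G\cdot[T]$ and reaches the same critical level. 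Your appeal to \cite{nessStratificationNullCone1984} covers this, but the logic should be stated explicitly rather than bundled into ``monotonicity''.
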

\begin{remark}
  \label{remark:gradflow-ODE-explanation-infinitesimal}
  Consider the linear action of~$\C^{a\times a} \oplus \C^{b\times b} \oplus \C^{c \times c}$ on~$V$ defined by
  \[
    (A,B,C) \star T = 
    (A \otimes I \otimes I) T + 
    (I \otimes B \otimes I) T +
    (I \otimes I \otimes C) T.
  \]
  We will refer to this action as the \emph{infinitesimal action}, and it is the action of the Lie algebra of~$\G$ induced by the action of~$\G$ on~$V$ (and the moment map maps into the Lie algebra).
  With this notation, the ODE in \cref{thm:GRS-gradflow-ODE-properties-tensor} reads
  \[
    \frac{\diff}{\diff t} T(t) = - \mu(T(t)) \star T(t), \quad T(0) = T.
  \]
\end{remark}
\begin{remark}
  The original statement of~\cref{thm:GRS-gradflow-ODE-properties-tensor} in \cite{georgoulasMomentWeightInequalityHilbert2021} is rather more general, and in terms of the gradient flow of the function~$[T] \mapsto \frac12 \norm{\mu([T])}^2$ on~$\Proj(V)$.
  This gradient however is easily shown to be equal to the infinitesimal action of~$\mu([T])$ on~$[T]$, see e.g.~\cite[Lem.~6.1]{nessStratificationNullCone1984} or~\cite[Lem.~3.1]{georgoulasMomentWeightInequalityHilbert2021}.
\end{remark}

\begin{remark}
  A natural question is whether~$\lim_{t\to\infty} T(t) / \norm{T(t)}$ also converges, i.e., whether working projectively can be replaced by merely normalizing the tensor.
  This does not appear to be known, although it is known that~$\mu(T(t))$ converges, see~\cite[Thm.~4.13]{hiraiGradientDescentUnbounded2024}.
\end{remark}

We also use the following theorem, showing that the minimum-norm points in any $\GL$-orbit-closure lie in one $\K$-orbit. %
\begin{theorem}[{\cite[Thm.~6.5]{georgoulasMomentWeightInequalityHilbert2021}}]
    \label{thm:second-ness-uniqueness}
    Let $T \in V \setminus \{0\}$. Let~$S, S' \in \closure{\G \cdot T}$ such that~$\norm{\mu(S)} = \norm{\mu(S')} = \min_{p \in \Delta(T)} \norm{p}$.
    Then~$[S] \in \K \cdot [S']$.
\end{theorem}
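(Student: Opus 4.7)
The plan is to reduce the problem to strict convexity of the Kempf--Ness function on a single closed orbit of a Levi subgroup. As a first reduction, the convex polytope $\Delta(T)$ has a unique minimum-norm point $p^* \in \weylchamber$, since the squared Euclidean norm is strictly convex. By $\K$-equivariance of $\mu$ (\cref{lemma:tensor moment map equivariance}) together with the definition of $\Delta(T)$, the ordered spectra $\spec\mu(S)$ and $\spec\mu(S')$ both lie in $\Delta(T)$ and, by the norm hypothesis, both equal $p^*$. Replacing $S'$ by a suitable $\K$-translate, I may therefore assume $\mu(S) = \mu(S') = p^*$.

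Next I locate $S$ and $S'$ in a common weight space. By \cref{lem:ness-checking-minimality} the one-parameter subgroup $t \mapsto \exp(tp^*)$ acts on $S$ and $S'$ by scalars $e^{\lambda t}$ and $e^{\lambda' t}$ respectively. Differentiating the identity $\|\exp(tp^*)\cdot R\|^2 = e^{2\lambda t}\|R\|^2$ at $t=0$ and using the standard moment-map relation $\tfrac{d}{dt}\big|_{t=0} \|\exp(tH)\cdot R\|^2 = 2\|R\|^2 \langle \mu(R), H\rangle$ for Hermitian $H$, I obtain $\lambda = \langle p^*, p^*\rangle = \lambda'$. Hence $S$ and $S'$ lie in the common weight-$\lambda$ eigenspace $E \subseteq V$ of the subgroup $\exp(\R p^*)$.

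Finally, I invoke strict convexity on a Levi. Let $G^{p^*} \subseteq \G$ denote the centralizer of $\exp(\R p^*)$ (a Levi subgroup) and set $K^{p^*} := \K \cap G^{p^*}$. The subspace $E$ is $G^{p^*}$-stable, and the moment map for the induced $G^{p^*}$-action on $E$ is (the restriction to the Lie algebra of $G^{p^*}$ of) $\nu := \mu - p^*$, which vanishes at $S$ and at $S'$. By the Kempf--Ness/Hilbert--Mumford correspondence, $[S]$ and $[S']$ then lie in \emph{closed} $G^{p^*}$-orbits in $\Proj(E)$; a gradient-flow argument, applying \cref{thm:GRS-gradflow-ODE-properties-tensor} to the $G^{p^*}$-action on $E$ initialized at nearby elements of $\G \cdot T \cap E$, shows that these two closed orbits must coincide. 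On a single closed $G^{p^*}$-orbit, the Kempf--Ness function $g \mapsto \|g\cdot S\|^2$ is strictly convex along geodesics in the symmetric space $K^{p^*}\backslash G^{p^*}$ modulo the complex stabilizer, so its minimum locus -- equivalently the zero-locus of $\nu$ -- forms a single $K^{p^*}$-orbit. This yields $[S] \in K^{p^*}\cdot[S'] \subseteq \K\cdot[S']$. The main obstacle is the step asserting that $[S]$ and $[S']$ sit in a \emph{common} closed $G^{p^*}$-orbit: a priori the minimum-norm locus in $\overline{\G\cdot T}$ could decompose into several Levi orbits, and ruling this out relies on the detailed stratification theory developed in \cite{nessStratificationNullCone1984, georgoulasMomentWeightInequalityHilbert2021}.
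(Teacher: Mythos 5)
The paper does not prove this statement at all: it is quoted directly from Georgoulas--Robbin--Salamon \cite[Thm.~6.5]{georgoulasMomentWeightInequalityHilbert2021}, with the remark that it extends Ness's classical theorem (which assumes $S,S' \in \G\cdot T$ rather than the closure). So there is no in-paper argument to compare against; your proposal has to stand on its own as a proof of the cited result.

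As such, it has a genuine gap, and it is exactly the one you flag at the end. Your reductions up to that point are sound and follow the classical Ness route: uniqueness of the minimum-norm point $p^*$, $\K$-translating so that $\mu(S)=\mu(S')=p^*$ (using that $\Delta(S)\subseteq\Delta(T)$ forces $\norm{\mu(S)}=\min_{p\in\Delta(S)}\norm{p}$, so \cref{lem:ness-checking-minimality} applies), the computation $\lambda=\langle p^*,p^*\rangle=\lambda'$ placing $S,S'$ in the same eigenspace $E$ of $\exp(\R p^*)$, and the affine Kempf--Ness uniqueness statement on a single closed orbit of the Levi $G^{p^*}$. But the entire content of the extension from the orbit to the orbit closure is the step you defer: a priori the minimum-norm locus of $\closure{\G\cdot T}$ could meet several distinct closed $G^{p^*}$-orbits, coming from different boundary $\G$-orbits of the closure. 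Your proposed repair---running the flow of \cref{thm:GRS-gradflow-ODE-properties-tensor} for the $G^{p^*}$-action on $E$ ``initialized at nearby elements of $\G\cdot T\cap E$''---does not establish this: $S$ lies only in the closure, so $\G\cdot T\cap E$ need not contain points near $S$ (it may be empty), and even when it does, you would need that gradient-flow limits of nearby initial points land in a single $\K^{p^*}$-orbit, which is essentially the uniqueness statement being proved, so the argument is circular at the decisive point. Closing this gap requires the stratification/limit analysis of Ness--Kirwan and Georgoulas--Robbin--Salamon, which is precisely why the paper imports the theorem as a black box instead of reproving it.
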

This is an extension of a classical theorem of Ness~\cite[Thm.~6.2]{nessStratificationNullCone1984}, where it is additionally assumed that~$S, S' \in \G \cdot T$.
To prove that $\nurmievT_2$ and $\nurmievT_5$ are not free it suffices to use the original theorem of Ness. However, \Cref{thm:second-ness-uniqueness} will give a more general criterion for freeness of tensors that we will need later.

\begin{corollary}
    \label{cor:moment-map-norm-minimizers-unique-in-orbit-closure}
    Let $T \in V \setminus \{0\}$.
    If $S \in \closure{\G \cdot T}$ is such that $\norm{\mu(S)} = \min_{p \in \Delta(T)} \norm{p}$, then $[S] \in \K \cdot [T]_\infty$.
\end{corollary}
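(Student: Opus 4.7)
The plan is to assemble the two prior results, Theorem 3.5 (existence of the gradient-flow limit with minimum-norm moment map image) and Theorem 3.7 (uniqueness of minimum-norm points up to the $\K$-action), applied to the single orbit closure $\closure{\G \cdot T}$. The corollary is just saying that, among all representatives of the minimum of $\norm{\mu(\,\cdot\,)}^2$ on $\closure{\G \cdot T}$, the particular point produced by the gradient flow of Theorem 3.5 is, up to the~$\K$-action, the only one. So the only real content beyond direct invocation is to reconcile the projective formulation of Theorem 3.5 with the affine formulation of Theorem 3.7.

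More concretely, I would first invoke Theorem 3.5 to obtain $[T]_\infty = \lim_{t \to \infty}[T(t)] \in \Proj(V)$, which satisfies $\norm{\mu([T]_\infty)} = \min_{p \in \Delta(T)} \norm{p}$. I then pick an arbitrary representative $T_\infty \in V \setminus \{0\}$ of this projective class; by $\C^\times$-invariance of the moment map we still have $\norm{\mu(T_\infty)} = \min_{p \in \Delta(T)} \norm{p}$. It remains to check $T_\infty \in \closure{\G \cdot T}$. This follows because $\G$ contains the scalar subgroup $\C^\times$, so $\closure{\G \cdot T}$ is closed under nonzero rescaling; more precisely, the projective convergence $[T(t)] \to [T]_\infty$ means there exist scalars $\lambda_t \in \C^\times$ with $\lambda_t T(t) \to T_\infty$ in~$V$, and since $\lambda_t T(t) \in \G \cdot T$ for every $t$, the limit $T_\infty$ lies in $\closure{\G \cdot T} \setminus \{0\}$.

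Now both $S$ and $T_\infty$ are elements of $\closure{\G \cdot T} \setminus \{0\}$ whose moment map norms equal $\min_{p \in \Delta(T)} \norm{p}$. Applying Theorem 3.7 gives $[S] \in \K \cdot [T_\infty] = \K \cdot [T]_\infty$, which is the desired conclusion. The main (minor) obstacle is the projective-vs-affine bookkeeping in the previous paragraph; everything else is a direct substitution into the two cited theorems, and no further analysis of the gradient flow or of stability is needed.
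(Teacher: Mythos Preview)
Your proposal is correct and takes essentially the same approach as the paper, which simply states that the corollary ``follows directly from'' \cref{thm:GRS-gradflow-ODE-properties-tensor} and \cref{thm:second-ness-uniqueness}. Your additional care in passing between the projective limit $[T]_\infty$ and an affine representative $T_\infty \in \closure{\G \cdot T} \setminus \{0\}$ (using that $\G$ contains the scalars) makes explicit the only nontrivial bookkeeping the paper leaves implicit.
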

\begin{proof}
  This follows directly from~\cref{thm:GRS-gradflow-ODE-properties-tensor,thm:second-ness-uniqueness}.
\end{proof}

\begin{proof}[Proof of~\cref{thm:free-tensor-has-free-tensor-in-minimal-K-orbit}.]
    Assume without loss of generality that~$T$ has free support, otherwise replace it by a tensor in its orbit with free support; this does not affect~$\overline{\G \cdot T}$ or~$\Delta(T)$.
    Consider the solution $T(t)$ to the gradient flow and its projective limit $[T]_\infty$ from \cref{thm:GRS-gradflow-ODE-properties-tensor}.
    We claim that~$T(t)$ remains in the orbit of~$T$ under the action of the maximal algebraic torus~$\D = (\C^*)^a \times (\C^*)^b \times (\C^*)^c \subseteq \G$, consisting of all diagonal matrices in~$\G$.
    Since the support of a tensor is invariant under the action of $\D$, this implies that $T(t)$ has the same support as $T$ for any $t$.
    Hence if we write $[T]_\infty = [T']$, then the support of $T' \in \overline{\D \cdot T}$ is contained in the support of $T$, and hence $T'$ is also a free tensor.
    \Cref{cor:moment-map-norm-minimizers-unique-in-orbit-closure} implies that $[T'] \in \K \cdot [S]$.
    After appropriate rescaling of $T'$, which does not affect freeness of its support, this implies $T' \in \K \cdot S$, and we are done.
    
    We now prove the claim.
    We first define~$\momentmaptorus \colon V \setminus \{0\} \to \Herm_{a,b,c}$ to be the composition of $\mu$ and the componentwise projection onto the diagonal part of~$\mu(T)$, i.e., the~$(i,j)$-th entry of $(\momentmaptorus(T))_1$ is~$0$ if~$i \neq j$ and equal to~$(\mu_1(T))_{i,i}$ otherwise.
    Consider the ODE
    \[
      \frac{\diff}{\diff t} \tilde T(t) = - \momentmaptorus(\tilde T(t)) \star \tilde T(t), \quad \tilde T(0) = T.
    \]
    where~$\star$ is as defined in~\cref{remark:gradflow-ODE-explanation-infinitesimal}. 
    Then the solution $\tilde T(t)$ is in the $\D$-orbit of $T$.
    To see this, first observe that $t \mapsto \exp(\int_0^t -\momentmaptorus(\tilde T(s)) \, \mathrm{d}s) \cdot T$ is also a solution to the ODE, because
    \begin{align*}
        \frac{\diff}{\diff t} \left(\exp\left(\int_0^t -\momentmaptorus(\tilde T(s)) \, \mathrm{d}s\right) \cdot T\right) & = - \momentmaptorus(\tilde T(t)) \star \left(\exp\left(\int_0^t -\momentmaptorus(\tilde T(s)) \, \mathrm{d}s\right) \cdot T\right) \\ & = - \momentmaptorus(\tilde T(t)) \star \tilde T(t),
    \end{align*}
    where we used that $\int_0^t -\momentmaptorus(\tilde T(s)) \, \mathrm{d}s$ is a diagonal matrix for every~$t \geq 0$, so that its matrix exponential is easy to differentiate. 
    By uniqueness of solutions to ODEs\footnote{The moment map, its diagonal part, and the infinitesimal action are locally Lipschitz functions, so uniqueness follows from a standard application of Picard--Lindel\"of theory.} we find that 
    \begin{equation*}
      \tilde T(t) = \exp\left(\int_0^t -\momentmaptorus(\tilde T(s)) \, \mathrm{d}s\right) \cdot T.
    \end{equation*}
    Because $\exp\bigl(\int_0^t -\momentmaptorus(\tilde T(s)) \, \mathrm{d}s\bigr) $ is an invertible diagonal matrix for every $t \geq 0$, this implies that $\tilde T(t) \in \D \cdot T$.
    This yields~$\mu(\tilde T(t)) = \momentmaptorus(\tilde T(t))$ by~\cref{lem:free-implies-diagonal-moment-map}.
    Hence $\tilde T(t)$ is also a solution to the ODE of \cref{thm:GRS-gradflow-ODE-properties-tensor} (\cref{equation:ode}).
    By another application of uniqueness of solutions to ODEs, we find~$\tilde T(t) = T(t)$ for all~$t \geq 0$, and conclude~$T(t) = \tilde T(t) \in \D \cdot T$.
\end{proof}

\section{Explicit non-free \texorpdfstring{$3 \times 3 \times 3$}{3 x 3 x 3} tensors}
\label{section:explicit-non-free-333}

Let~$e_{i,j,k} \coloneqq e_i \ot e_j \ot e_k$, and consider the tensors
\begin{equation*}
    \nurmievT_2 = e_{1,2,3} + e_{1,3,2} + e_{2,1,3} + e_{2,2,1} + e_{2,2,2} + e_{3,1,1}
\end{equation*}
and
\begin{equation*}
    \nurmievT_5 = e_{1,1,3} + e_{1,3,1} + e_{1,3,2} + e_{2,2,1} + e_{3,1,2}
\end{equation*}
which are the second and fifth $\Sl$-unstable tensors from Nurmiev's classification~\cite{nurmievOrbitsInvariantsCubic2000}.
\begin{theorem}
    \label{thm:c333 t2 t5 not free}
    The tensors $\nurmievT_2$ and $\nurmievT_5$ are not free.
\end{theorem}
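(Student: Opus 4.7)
The plan is to apply \cref{thm:free-tensor-has-free-tensor-in-minimal-K-orbit} in contrapositive form: for each $T \in \{\nurmievT_2, \nurmievT_5\}$, exhibit a minimum-norm $S \in \closure{\G \cdot T} \setminus \{0\}$ and show that no $k \in \K$ makes $k \cdot S$ have free support.

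Computing $\mu(T)$ directly from \cref{def:moment-map-tensors}, one finds that in both cases the slices of $T$ along the first and second tensor factors have pairwise disjoint supports, so $\mu_1(T)$ and $\mu_2(T)$ are already diagonal, while $\mu_3(T)$ has a nondiagonal part supported on a single pair of indices (coming from the overlap of two third-factor slices at $(2,2)$ for $\nurmievT_2$ and at $(1,3)$ for $\nurmievT_5$). I would construct $S$ in two steps: (i) apply a unitary $C \in \K$ on the third factor that diagonalizes $\mu_3(T)$, choosing the rotation carefully if $\mu_3(T)$ has a degenerate eigenspace; (ii) apply a diagonal scaling $D \in \D$ on $C \cdot T$ whose entries solve the Ness eigenvalue equation $(\mu_1(S))_{aa} + (\mu_2(S))_{bb} + (\mu_3(S))_{cc} = \lambda$ on $\supp(S)$ required by \cref{lem:ness-checking-minimality}. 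This reduces to a small polynomial system that I would solve explicitly; for $\nurmievT_2$ I expect a solution in the $\G$-orbit itself, while for $\nurmievT_5$ it may require a limit in one of the scaling parameters and hence live in a strictly smaller orbit of the closure.

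Once $S$ is in hand, suppose for contradiction that $T$ is free. By \cref{thm:free-tensor-has-free-tensor-in-minimal-K-orbit} there is some $k \in \K$ with $k \cdot S$ free, and by \cref{lem:free-implies-diagonal-moment-map} $\mu(k \cdot S)$ is diagonal. Combining this with the equivariance $\mu(k \cdot S) = k \mu(S) k^{-1}$ from \cref{lemma:tensor moment map equivariance} and the fact that $\mu(S)$ is already diagonal, each $k_\ell$ must preserve the eigenspace block structure of $\mu_\ell(S)$: a simple eigenvalue restricts $k_\ell$ to a phased permutation, while a multiplicity-$m$ eigenvalue permits an arbitrary $\U_m$-action on the corresponding eigenspace. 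Since phased permutations do not change the support, freeness of $k \cdot S$ reduces to whether the remaining $\U_m$-action on the degenerate eigenspace can produce free support.

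To rule this out, focus on the factor whose $\mu_\ell(S)$ carries a nontrivial eigenspace --- for $\nurmievT_2$, the third factor, whose $\mu_3(S)$ has a two-dimensional eigenspace spanned by $e_1, e_2$ --- and extract the fiber vectors $v_{a,b} = (S_{a,b,1}, S_{a,b,2}) \in \C^2$ at each $(a,b)$ for which at least one component is nonzero. Freeness of $k \cdot S$ would force the remaining $\U_2$-action to route each $v_{a,b}$ into one of the two coordinate axes of $\C^2$. Checking that at least three of these fiber vectors are pairwise non-parallel then finishes the proof: a unitary sends the standard basis to an orthonormal frame, so three non-parallel vectors cannot all lie in just two preimage lines, and a pigeonhole contradiction follows. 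The same fiber-vector obstruction applies to $\nurmievT_5$, using the factor whose eigenspace degeneracy produces the obstructing geometry. The main anticipated obstacle is step (ii) for $\nurmievT_5$, whose moment map carries multiplicity-two eigenvalues in both $\mu_1$ and $\mu_3$: the diagonalizing rotations within each degenerate eigenspace must be coordinated with the scaling, and the minimum may only be reached as a limit point in $\closure{\G \cdot T}$ rather than in the orbit itself, which is precisely where one needs \cref{thm:second-ness-uniqueness} rather than Ness' original theorem.
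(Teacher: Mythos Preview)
Your strategy matches the paper's: exhibit a minimum-norm point $S$ in the orbit, verify it via \cref{lem:ness-checking-minimality}, use \cref{lemma:tensor moment map equivariance,lem:free-implies-diagonal-moment-map} to reduce the relevant $k \in \K$ to a $\U_2$-action on the degenerate eigenspace of $\mu_3(S)$, and finish with the fiber-vector pigeonhole obstruction (three pairwise non-parallel vectors in $\C^2$ cannot all be sent to coordinate axes).

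The paper differs only in how $S$ is produced: instead of your two-step ``unitary on factor~3, then diagonal scaling'' scheme, it writes down an explicit $g \in \G$ (diagonal on factors $1,2$ and lower-triangular on factor~3) and verifies the three properties of \cref{prop:c333 t2 computation} by direct computation. One caution about your step~(ii): a diagonal scaling applied after the unitary $C$ will in general destroy the diagonality of $\mu_3$, so your ``small polynomial system'' must include the off-diagonal vanishing of $\mu_3$ as an extra constraint, not only the Ness eigenvalue equations. Two corrections for~$\nurmievT_5$: the paper's $S_5$ lies in the $\G$-orbit itself (not merely its closure), so Ness' original theorem already suffices and no limit is needed; and at the minimum-norm point one has $\mu_1(S_5)=\mu_2(S_5)=\diag(\tfrac{13}{30},\tfrac13,\tfrac{7}{30})$ with \emph{distinct} eigenvalues, while only $\mu_3(S_5)=\diag(\tfrac25,\tfrac25,\tfrac15)$ is degenerate --- so the endgame is exactly parallel to~$\nurmievT_2$, not harder.
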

We prove that $\nurmievT_2$ is not free by giving an explicit $S \in \closure{\G \cdot T} \setminus \{0\}$ (in fact, even in $\G \cdot T$) such that $\mu([S])$ is the minimum-norm point of $\Delta(T)$, and subsequently showing that no element of $\K \cdot S$ has free support.
To show that no element of $\K \cdot [S]$ has free support, we exploit the $\K$-equivariance of the moment map, together with the fact that the moment map evaluates to a tuple of diagonal matrices if the tensor has free support (\cref{lem:free-implies-diagonal-moment-map}).
The same strategy works for~$\nurmievT_5$, for which we provide the essential data necessary to mimic the proof for~$\nurmievT_2$ in \cref{remark:T5 non-free proof}.

The next proposition records a computation yielding an element of $\K \cdot [\nurmievT_2]_\infty$ (with notation as in~\cref{thm:GRS-gradflow-ODE-properties-tensor}):
\begin{lemma}
    \label{prop:c333 t2 computation}
    Let
    \begin{equation*}
        g_1 =
        \begin{bmatrix}
            \sqrt{\frac{5}{2 \cdot 11}} & \grayzero & \grayzero \\
            \grayzero & 1 & \grayzero \\
            \grayzero & \grayzero & \sqrt{\frac{7 \cdot 11}{2 \cdot 5}}
        \end{bmatrix}, \,
        g_2 =
        \begin{bmatrix}
            \sqrt{\frac{5^2}{7 \cdot 11^2}} & \grayzero & \grayzero \\
            \grayzero & \sqrt{\frac{2 \cdot 5}{7 \cdot 11}} & \grayzero \\
            \grayzero & \grayzero & 1
        \end{bmatrix}, \,
        g_3 =
        \begin{bmatrix}
            1 & \grayzero & \grayzero \\
            -\frac{4}{\sqrt{3 \cdot 5 \cdot 7}} & \sqrt{\frac{11^2}{3 \cdot 5 \cdot 7}} & \grayzero \\
            \grayzero & \grayzero & \frac{11}{5}
        \end{bmatrix}.
    \end{equation*}
    Then~$g = (g_1, g_2, g_3)$ is such that $S_2 := g \cdot \nurmievT_2$ satisfies: 
    \begin{enumerate}[label=\upshape(\arabic*)]
    \item $S_2 =
        \sqrt{\tfrac{1}{7}}\, e_{1,2,3}
        + \sqrt{\tfrac{11}{42}}\, e_{1,3,2}
        + \sqrt{\tfrac{1}{7}}\, e_{2,1,3}
        + \sqrt{\tfrac{10}{77}}\, e_{2,2,1}
        + \sqrt{\tfrac{2}{33}}\, e_{2,2,2}
        + \sqrt{\tfrac{5}{22}}\, e_{3,1,1}
        - \sqrt{\tfrac{8}{231}}\, e_{3,1,2}$,
    \item $\mu(S_2) = \bigl(\diag(\tfrac{17}{42}, \tfrac{1}{3}, \tfrac{11}{42}), \diag(\tfrac{17}{42}, \tfrac{1}{3}, \tfrac{11}{42}), \diag(\tfrac{5}{14}, \tfrac{5}{14}, \tfrac{2}{7})\bigr)$,
    \item for every~$t \in \R$, $\exp(-t \mu(S_2)) \cdot S_2 = e^{- t \, 43/42} S_2$.
    \end{enumerate}
\end{lemma}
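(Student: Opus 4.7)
}
All three claims reduce to explicit computation, so the plan is to organize them so that each stage feeds the next and no expression is manipulated twice.

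\textbf{Step 1: verify (1) by direct application of $g$.} Since $g_1$ and $g_2$ are diagonal, they act on standard basis vectors by scalar multiplication: $g_1 e_i = (g_1)_{ii} e_i$ and $g_2 e_j = (g_2)_{jj} e_j$. The matrix $g_3$ is lower triangular, so $g_3 e_1 = e_1 - \tfrac{4}{\sqrt{105}} e_2$, $g_3 e_2 = \tfrac{11}{\sqrt{105}} e_2$, and $g_3 e_3 = \tfrac{11}{5} e_3$. Expanding $g \cdot \nurmievT_2$ term by term on the six basis tensors in $\nurmievT_2$ and collecting, I expect exactly the seven summands claimed: six come from the diagonal entries of $g_3$, and the extra term $-\sqrt{8/231}\, e_{3,1,2}$ comes from the off-diagonal $(g_3)_{21}$ applied to the $e_{3,1,1}$ summand (the only summand where a ``$k=1$'' index is tensored with a valid $(i,j)$-pair). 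The computation is mechanical; the only risk is bookkeeping the radicals like $\sqrt{5/22}\cdot(-4/\sqrt{105}) = -\sqrt{8/231}$, which I would double-check by clearing denominators and matching prime factorizations.

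\textbf{Step 2: derive (2) from (1).} With $S_2$ written out explicitly, I compute the three marginals using \cref{def:moment-map-tensors}. Note that $\norm{S_2}^2 = 1$: summing the squared coefficients yields $\tfrac{1}{7}+\tfrac{11}{42}+\tfrac{1}{7}+\tfrac{10}{77}+\tfrac{2}{33}+\tfrac{5}{22}+\tfrac{8}{231}$, which I expect to reduce to $1$ after using a common denominator of $2 \cdot 3 \cdot 7 \cdot 11 = 462$. For each component of $\mu$, the $(p,q)$-entry is $\sum_{j,k} \overline{(S_2)_{p,j,k}}\,(S_2)_{q,j,k}$ (and analogously). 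Looking at the support of $S_2$, one sees that distinct first indices never share the same $(j,k)$ pair \emph{except} possibly when the off-diagonal $e_{3,1,2}$ summand overlaps with another term; since the only other term with first index $2$ and second index $1$ is $e_{2,1,3}$, which has $k=3 \neq 2$, no off-diagonal entries arise. Thus $\mu_1(S_2)$ is diagonal with entries $\tfrac{1}{7}+\tfrac{11}{42}$, $\tfrac{1}{7}+\tfrac{10}{77}+\tfrac{2}{33}$, $\tfrac{5}{22}+\tfrac{8}{231}$, which I expect to simplify to $\tfrac{17}{42}, \tfrac{1}{3}, \tfrac{11}{42}$. The analogous checks for $\mu_2$ and $\mu_3$ proceed identically; again the only potential off-diagonal contribution comes from the $e_{3,1,2}$ term, and it vanishes against all other summands.

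\textbf{Step 3: derive (3) from (2) and the support of $S_2$.} Since each $\mu_\ell(S_2)$ is diagonal, the group element $\exp(-t\mu(S_2)) = (\exp(-t\mu_1),\exp(-t\mu_2),\exp(-t\mu_3))$ is a triple of diagonal matrices, and its action on a pure basis tensor $e_{i,j,k}$ scales it by $\exp\!\bigl(-t[(\mu_1)_{ii}+(\mu_2)_{jj}+(\mu_3)_{kk}]\bigr)$. So it suffices to check that every $(i,j,k)$ in the support of $S_2$ gives the same sum $43/42$. Working over the common denominator $42$, the diagonals are $(17,14,11)/42$, $(17,14,11)/42$, $(15,15,12)/42$, and running through the seven support elements $(1,2,3)$, $(1,3,2)$, $(2,1,3)$, $(2,2,1)$, $(2,2,2)$, $(3,1,1)$, $(3,1,2)$ produces $17+14+12$, $17+11+15$, $14+17+12$, $14+14+15$, $14+14+15$, $11+17+15$, $11+17+15$, all equal to $43$. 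Hence $\exp(-t\mu(S_2))\cdot S_2 = e^{-43t/42} S_2$, and by \cref{lem:ness-checking-minimality} this already shows $\norm{\mu(S_2)}$ is the minimum-norm point on $\Delta(\nurmievT_2)$, which is the reason $g$ was constructed in the first place.

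\textbf{Main obstacle.} There is no conceptual difficulty; the only real hazard is arithmetic precision in Steps 1--2. I would carry out each radical calculation once over the common denominator $2\cdot 3\cdot 7\cdot 11 = 462$, verify $\|S_2\|^2 = 1$ first (providing a global sanity check), and only then read off the diagonal entries of $\mu(S_2)$. The eigenvalue check in Step 3 is then essentially free.
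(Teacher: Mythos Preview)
Your approach is the same as the paper's (which simply states ``this follows from a direct computation''), and your organization of Steps~1--3 is sound. Two small oversights are worth flagging, though, since if you follow your own description literally you will get wrong intermediate numbers.

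In Step~1, $e_{3,1,1}$ is \emph{not} the only summand of $\nurmievT_2$ with $k=1$: the term $e_{2,2,1}$ also has $k=1$, and the off-diagonal entry $(g_3)_{21}$ acting on it produces a contribution $-\tfrac{4}{\sqrt{105}}\sqrt{10/77}\,e_{2,2,2}$. This must be combined with the diagonal contribution $\tfrac{11}{\sqrt{105}}\sqrt{10/77}\,e_{2,2,2}$ from the original $e_{2,2,2}$ term to obtain the claimed coefficient $\sqrt{2/33}$; the diagonal contribution alone gives $\tfrac{11}{7}\sqrt{2/33}$, which is wrong.

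In Step~2, the claim that the off-diagonal entries of $\mu_3(S_2)$ vanish ``trivially'' is not correct: the support elements $(2,2,1),(2,2,2)$ and $(3,1,1),(3,1,2)$ both contribute to $(\mu_3)_{1,2}$, and one needs the genuine cancellation
\[
\sqrt{\tfrac{10}{77}}\sqrt{\tfrac{2}{33}} \;-\; \sqrt{\tfrac{5}{22}}\sqrt{\tfrac{8}{231}} \;=\; 0.
\]
Once you account for these two points the plan goes through, and Step~3 is entirely correct as written.
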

\begin{proof}
    This follows from a direct computation.
\end{proof}

\begin{lemma}
\label{prop:c333 s2 no free support}
There is no~$k \in \K$ such that~$k \cdot S_2$ has free support.
\end{lemma}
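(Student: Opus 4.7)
The plan is to derive a contradiction: suppose some $k = (k_1, k_2, k_3) \in \K$ makes $k \cdot S_2$ have free support. Combining \cref{lem:free-implies-diagonal-moment-map} with the equivariance of \cref{lemma:tensor moment map equivariance} forces each $k_i\, \mu_i(S_2)\, k_i^*$ to be diagonal. The explicit values from \cref{prop:c333 t2 computation}---where $\mu_1(S_2)$ and $\mu_2(S_2)$ are diagonal with three distinct eigenvalues, while $\mu_3(S_2) = \diag(5/14, 5/14, 2/7)$ has a two-fold degenerate eigenvalue on $\mathrm{span}(e_1, e_2)$---pin the $k_i$ down tightly: $k_1$ and $k_2$ must be monomial (a permutation matrix times a diagonal unitary), and after absorbing a third-factor permutation matrix into $k_3$ (which just relabels positions along the third factor and therefore preserves freeness of the support), $k_3$ lies in $U(2) \oplus U(1)$, with the $U(2)$-block $U = \begin{pmatrix} a & b \\ c & d \end{pmatrix}$ acting on $\mathrm{span}(e_1, e_2)$.

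Once $k$ has this restricted form, I would track the five nonzero ``columns'' $(i,j,\cdot)$ of $S_2$ obtained by fixing the first two coordinates. The monomial parts $k_1, k_2$ act bijectively on column positions $(i, j)$ and only multiply by phases, so freeness of $k \cdot S_2$ reduces to the condition that the image of each such column of $S_2$ under $k_3$ has at most one nonzero coordinate. The column at $(1, 3, \cdot)$ equals $\sqrt{11/42}\, e_2$, so its $k_3$-image is proportional to $(b, d, 0)$; freeness forces $b = 0$ or $d = 0$, and unitarity of $U$ then collapses the possibilities to either $U$ diagonal (with $|a| = |d| = 1$) or $U$ anti-diagonal (with $|b| = |c| = 1$).

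The contradiction comes from the column at $(2, 2, \cdot)$, which equals $\sqrt{10/77}\, e_1 + \sqrt{2/33}\, e_2$ with two distinct positive magnitudes. In both the diagonal and anti-diagonal subcases, the image of this column under $k_3$ still has two nonzero coordinates: $(a\sqrt{10/77}, d\sqrt{2/33}, 0)$ in the first case and $(b\sqrt{2/33}, c\sqrt{10/77}, 0)$ in the second. This contradicts freeness and completes the proof. The main conceptual step (and the principal obstacle) is the reduction to the monomial-plus-$U(2)$-block form via moment-map equivariance and the spectral data of \cref{prop:c333 t2 computation}; the final case check is an elementary computation.
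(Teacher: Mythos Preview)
Your argument is correct and follows essentially the same route as the paper: reduce via \cref{lemma:tensor moment map equivariance} and \cref{lem:free-implies-diagonal-moment-map} together with the spectral data of \cref{prop:c333 t2 computation} to a $\U_2$-action on the first two third-index coordinates, then obstruct. The only cosmetic difference is the endgame: the paper notes that the three relevant column vectors $(0,\sqrt{11/42})$, $(\sqrt{10/77},\sqrt{2/33})$, $(\sqrt{5/22},-\sqrt{8/231})$ are pairwise non-parallel and invokes pigeonhole, whereas you use only the first two, letting the first force $U$ to be (anti\nobreakdash-)diagonal and the second deliver the contradiction.
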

\begin{proof}
    The moment map is $\K$-equivariant (\cref{lemma:tensor moment map equivariance}). \Cref{prop:c333 t2 computation}(2) shows that $\mu(S_2)$ is a tuple of diagonal matrices, such that $\mu_1(S_2) = \mu_2(S_2) = \diag(\frac{17}{42}, \frac{1}{3}, \frac{11}{42})$ has no repeated eigenvalues.
    Suppose~$k = (k_1, k_2, k_3) \in \K$ is such that~$k \cdot S_2$ has free support.
    Then~$\mu_1(k \cdot S_2) = k_1 \mu_1 (S_2) k_1^{-1}$ is diagonal by~\cref{lem:free-implies-diagonal-moment-map}, but also isospectral to~$\mu_1(S_2)$.
    By further composing with a local permutation we may assume that~$\mu_1(k \cdot S_2) = \mu_1(S_2)$.
    Similarly we may assume that~$\mu_2(k \cdot S_2) = \mu_2(S_2)$.
    This implies that~$k_1$ and~$k_2$ are both diagonal unitary matrices, since the eigenvalues of~$\mu_1(S_2) = \mu_2(S_2)$ are all distinct.
    These do not affect the support of a tensor, and so $k \cdot S_2 = (k_1, k_2, I) \cdot ((I, I, k_3) \cdot S_2)$ has the same support as $(I, I, k_3) \cdot S_2$.
    Therefore it just remains to show that $(I, I, k_3) \cdot S_2$ does not have free support.
    Again we may assume without loss of generality that~$\mu_3(k \cdot S_2) = \mu_3(S_2)$ by acting with a permutation on the third system.
    Now $\mu_3(S_2) = \diag(\frac{5}{14}, \frac{5}{14}, \frac{2}{7})$, so $k_3$ is necessarily of the form
    \begin{equation}
        \label{eq:s2 k3 form}
        k_3 = \begin{bmatrix}
            U & 0 \\
            0 & z
        \end{bmatrix}
    \end{equation}
    for some $2 \times 2$ unitary $U$ and $z \in \C$ with $\abs{z} = 1$.
    The choice of $z$ does not affect the support, so we may assume $z = 1$.
    It remains to argue that there is no $U$ such that for $k_3$ as in \cref{eq:s2 k3 form} the tensor $(I, I, k_3) \cdot S_2$ has free support.
    For this, it is helpful to view $S_2$ in terms of its slices (\cref{prop:c333 t2 computation}):
    \begin{equation*}
        S_2 =
        \left[
        \begin{array}{ccc|ccc|ccc}
            \grayzero & \grayzero & \grayzero & \grayzero & \grayzero & \sqrt{\frac{1}{7}} & \sqrt{\frac{5}{22}} & - \sqrt{\frac{8}{231}} & \grayzero \\
            \grayzero & \grayzero & \sqrt{\frac{1}{7}} & \sqrt{\frac{10}{77}} & \sqrt{\frac{2}{33}} & \grayzero & \grayzero & \grayzero & \grayzero \\
            \grayzero & \sqrt{\frac{11}{42}} & \grayzero & \grayzero & \grayzero & \grayzero & \grayzero & \grayzero & \grayzero
        \end{array}
        \right].
    \end{equation*}
    The unitary $U$ acts on $S_2$ by taking linear combinations of the first two columns of each slice simultaneously. 
    Now we note that there is no $U$ that makes the first two columns free in each of these three slices simultaneously.
    The obstruction is that there is no~$2 \times 2$ unitary matrix~$U$ that the vectors
    \begin{equation*}
        U \begin{bmatrix} 0 \\ \sqrt{\frac{11}{42}} \end{bmatrix},
        \quad
        U \begin{bmatrix} \sqrt{\frac{10}{77}} \\ \sqrt{\frac{2}{33}} \end{bmatrix},
        \quad
        U \begin{bmatrix} \sqrt{\frac{5}{22}} \\ - \sqrt{\frac{8}{231}}\end{bmatrix}
    \end{equation*}
    each have at most one non-zero entry.
    If such a $U$ existed, two of the vectors must be parallel to one another by the pigeonhole principle, but this is clearly not the case.
\end{proof}
\begin{proof}[Proof of~\cref{thm:c333 t2 t5 not free} for $\nurmievT_2$.]
    We have $S_2 \in \GL \cdot \nurmievT_2$, and $\norm{\mu(S_2)} = \min_{p \in \Delta(T)} \norm{p}$ by \cref{prop:c333 t2 computation,lem:ness-checking-minimality}.
    Therefore, if $\nurmievT_2$ were free, \cref{thm:free-tensor-has-free-tensor-in-minimal-K-orbit} would show that $S_2$ contains some tensor with free support in its $\K$-orbit.
    However, \cref{prop:c333 s2 no free support} shows that this is not the case.
\end{proof}

\begin{remark}
\label{remark:T5 non-free proof}

The proof that $\nurmievT_5 = e_{1,1,3} + e_{1,3,1} + e_{1,3,2} + e_{2,2,1} + e_{3,1,2}$,
is not free is similar to the proof for $\nurmievT_2$ given above.
In its slice form, $\nurmievT_5$ is given by
\begin{equation*}
    \nurmievT_5 = \left[\!
    \begin{array}{ccc|ccc|ccc}\!
        \grayzero & \grayzero & 1  &  \grayzero & \grayzero & \grayzero  &   \grayzero & 1 & \grayzero \\
        \grayzero & \grayzero & \grayzero  &  1 & \grayzero & \grayzero  &   \grayzero & \grayzero & \grayzero \\
        1 & 1 & \grayzero  &  \grayzero & \grayzero & \grayzero  &   \grayzero & \grayzero & \grayzero
    \end{array}\!\right].
\end{equation*}
This support is not free as the first slice and third row contains two non-zero entries.
One should observe that this has a similar structure as $\nurmievT_2$, which in slice form is written as
\begin{equation*}
    \nurmievT_2 = \left[\!
    \begin{array}{ccc|ccc|ccc}
        \grayzero & \grayzero & \grayzero  &  \grayzero & \grayzero & 1  &   1 & \grayzero & \grayzero \\
        \grayzero & \grayzero & 1  &  1 & 1 & \grayzero  &   \grayzero & \grayzero & \grayzero \\
        \grayzero & 1 & \grayzero  &  \grayzero & \grayzero & \grayzero  &   \grayzero & \grayzero & \grayzero
    \end{array}\!\right].
\end{equation*}
The minimum-norm point in the moment polytope of $\nurmievT_5$ can be found as follows:
in its orbit lies the tensor
\begin{equation*}
  S_5 = \left[\!
     \begin{array}{ccc|ccc|ccc}
       \grayzero & \grayzero & \sqrt{\frac{1}{5}}                    &  \grayzero & \grayzero & \grayzero  &   \grayzero & \sqrt{\frac{7}{3\grayzero}} & \grayzero \\
       \grayzero & \grayzero & \grayzero                                     &  \sqrt{\frac{2}{7}} & - \sqrt{\frac{1}{21}} & \grayzero  &   \grayzero & \grayzero & \grayzero \\
       \sqrt{\frac{4}{35}} & \sqrt{\frac{5}{42}} & \grayzero &  \grayzero & \grayzero & \grayzero  &   \grayzero & \grayzero & \grayzero
    \end{array}\!\right],
\end{equation*}
and evaluating the moment map on this tensor gives
\begin{equation*}
    \mu(S_5) = \bigl(\diag(\tfrac{13}{30}, \tfrac{1}{3}, \tfrac{7}{30}), \diag(\tfrac{13}{30}, \tfrac{1}{3}, \tfrac{7}{30}), \diag(\tfrac{2}{5}, \tfrac{2}{5}, \tfrac{1}{5})\bigr),
\end{equation*}
for which~$\exp(t \mu(S_5)) S_5 \propto S_5$, establishing that~$\mu(S_5)$ is the minimum-norm point in $\Delta(S_5) = \Delta(\nurmievT_5)$ by~\cref{lem:ness-checking-minimality}.
The argument for proving that $S_5$ does not have any tensor with free support in its $\K$-orbit is exactly the same as for $\nurmievT_2$.
\end{remark}

\begin{remark}[Quantum and support functionals]
    Although $\nurmievT_2$ and $\nurmievT_5$ are not free, we observe numerically that the quantum functionals $F_\theta$~\cite{christandlUniversalPointsAsymptotic2021} and Strassen's (upper) support functionals $\zeta^\theta$~\cite{strassen1991} agree for all weights $\theta$. %
    It is not known whether the quantum functionals and Strassen's upper support functionals are equal in general, but equality is known to hold for free tensors.
\end{remark}

\begin{remark}
The point~$\bigl(\diag(\tfrac{17}{42}, \tfrac{1}{3}, \tfrac{11}{42}), \diag(\tfrac{17}{42}, \tfrac{1}{3}, \tfrac{11}{42}), \diag(\tfrac{5}{14}, \tfrac{5}{14}, \tfrac{2}{7})\bigr)$ can also be realized as the moment map image of a tensor with free support, namely
\[
    T
    = \sqrt{\frac{5}{14}} e_{1,1,1} 
    + \sqrt{\frac{1}{21}} e_{1,2,2} 
    + \sqrt{\frac{1}{21}} e_{2,1,2}
    + \sqrt{\frac{2}{7}} e_{2,2,3}
    + \sqrt{\frac{11}{42}} e_{3,3,2}.
\]
\end{remark}

\section{Explicit non-free \texorpdfstring{$n\times n \times n$}{n x n x n} tensors for all \texorpdfstring{$n \geq 3$}{n at least 3}}
\label{section:explicit-non-free-nnn}

We proved that $\nurmievT_2$ and $\nurmievT_5$, which are concise tensors in $\C^3 \ot \C^3 \ot \C^3$, are not free (\cref{thm:c333 t2 t5 not free}). In this section we will construct, for every $\gldim \geq 3$, a concise tensor $T \in \C^\gldim \ot \C^\gldim \ot \C^\gldim$ that is not free. The construction can be thought of as a generalization of $\nurmievT_2$.

For $n\geq 2$, define the support $\Gamma_\gldim \subseteq [\gldim]^3$ as the set of all triples $(i,j,k) \in [n]^3$ such that
\begin{equation}
    i + j =
    \begin{cases}
        \gldim + 1 & \text{ if } k \in [\gldim-1], \\
        \gldim & \text{ if } k = \gldim.
    \end{cases}
\end{equation}
In other words, $\Gamma_n = \{(i,n+1-i,k) \mid i \in [n], k \in [n-1]\} \cup \{(i,n-i,n) \mid i \in [n]\}$.
For instance, for $\gldim = 3$, we have
\[
    \Gamma_3 = \{ (1,3,1), (1,3,2), (1,2,3), (2,2,1), (2,2,2), (2,1,3), (3,1,1), (3,1,2) \}.
\]
This corresponds to the support pattern as depicted below, similar to that of $\nurmievT_2$:
\begin{equation*}
\left[\!
    \begin{array}{ccc|ccc|ccc}
    \grayzero & \grayzero & \grayzero & \grayzero & \grayzero & * & * & * & \grayzero \\
    \grayzero & \grayzero & * & * & * & \grayzero & \grayzero & \grayzero & \grayzero \\
    * & * & \grayzero & \grayzero & \grayzero & \grayzero & \grayzero & \grayzero & \grayzero
    \end{array}\!
\right]
\end{equation*}
When $\gldim = 4$, this is extended as
\begin{equation*}
\left[\!
    \begin{array}{cccc|cccc|cccc|cccc}
    \grayzero&\grayzero&\grayzero&\grayzero& \grayzero&\grayzero&\grayzero&\grayzero& \grayzero&\grayzero&\grayzero&*& *&*&*&\grayzero \\
    \grayzero&\grayzero&\grayzero&\grayzero& \grayzero&\grayzero&\grayzero&*& *&*&*&\grayzero& \grayzero&\grayzero&\grayzero&\grayzero \\
    \grayzero&\grayzero&\grayzero&*& *&*&*&\grayzero& \grayzero&\grayzero&\grayzero&\grayzero& \grayzero&\grayzero&\grayzero&\grayzero \\
    *&*&*&\grayzero& \grayzero&\grayzero&\grayzero&\grayzero& \grayzero&\grayzero&\grayzero&\grayzero& \grayzero&\grayzero&\grayzero&\grayzero
    \end{array}\!
\right]
\end{equation*}
If instead one slices tensor along the third index, then the first~$n-1$ matrices are anti-diagonal matrices, and the last slice is shifted up.
\begin{remark}
    For~$n=2$,~$\Gamma_n = \{ (1,1,2), (1,2,1), (2,1,1) \}$ is the support of the so-called~$W$-state.
    This support is in particular free. For $n\geq3$, however, $\Gamma_n$ is not free.
\end{remark}
\subsection{Moment polytope inequality and minimum-norm point}
\label{subsection:nonfree-tensor-polytope-inequality}

We prove an inequality on the moment polytope of any tensor whose support is contained in~$\Gamma_\gldim$.
Recall that~$u_n = \frac1n (1, \dotsc, 1) \in \R^n$.
We define~$h \in \R^n \times \R^n \times \R^n$ and~$c \in \R$ by
\begin{equation}
  \label{eq:non-free tensors h c defn}
  h_1 = h_2 = \Bigl(\frac{\gldim-1}{2}, \frac{\gldim-1}{2} - 1, \dotsc, -\frac{\gldim-1}{2}\Bigr), \, %
  h_3 = u_\gldim - e_\gldim,\, c = \frac1\gldim.
\end{equation}
\begin{lemma}
    \label{lem:non-free tensor inequality}
    Let $n\geq 2$ and $T \in \C^\gldim \ot \C^\gldim \ot \C^\gldim$. If $\supp(T) \subseteq \Gamma_n$, %
    then for every point $p = (p_1 \sep p_2 \sep p_3) \in \Delta(T)$, we have $\<p, h> \geq c$.
\end{lemma}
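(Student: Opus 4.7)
The plan is to apply Franz's theorem (\cref{theorem:tensor moment polytopes franz}) with the specific choice $L = I$. Per the equivalence discussed after that theorem, establishing $\Delta(T) \subseteq \{p : \langle p, h\rangle \geq c\}$ reduces to showing, for every $U \in \G_{\uppertriangular}$, that $\bigcap_{L \in \G_{\lowertriangular}} \conv \supp(LU \cdot T) \cap \weylchamber$ lies in the halfspace. Taking $L = I$ and using convexity of the halfspace, it suffices to prove the pointwise statement that every $(i,j,k) \in \supp(U \cdot T)$ satisfies $h_1(i) + h_2(j) + h_3(k) \geq c$.

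The first ingredient is the calculation that $\langle (e_i, e_j, e_k), h\rangle = c$ for every $(i,j,k) \in \Gamma_\gldim$. Writing $h_1(i) + h_2(j) = (n+1) - (i+j)$: in the case $k < n$, where $i + j = n+1$, this sum is $0$ and $h_3(k) = 1/n = c$; in the case $k = n$, where $i + j = n$, this sum is $1$ and $h_3(n) = 1/n - 1$, again summing to $c$. So all of $\Gamma_\gldim$ lies precisely on the boundary hyperplane $\{\langle \cdot, h\rangle = c\}$.

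The second ingredient is monotonicity: $\langle (e_i, e_j, e_k), h\rangle$ is weakly decreasing in the componentwise partial order on $[n]^3$. Indeed $h_1$ and $h_2$ are strictly decreasing in their index, while $h_3$ is constant $1/n$ on $[n-1]$ and drops to $1/n - 1$ at $n$, hence also weakly decreasing. Combined with the first ingredient: if $(i,j,k) \leq (i',j',k')$ componentwise with $(i',j',k') \in \Gamma_\gldim$, then $\langle (e_i,e_j,e_k), h\rangle \geq c$. For $U \in \G_{\uppertriangular}$, the formula $(U \cdot T)_{i,j,k} = \sum_{i' \geq i,\, j' \geq j,\, k' \geq k} (U_1)_{i,i'} (U_2)_{j,j'} (U_3)_{k,k'}\, T_{i',j',k'}$ shows that $\supp(U \cdot T)$ lies in the downward closure of $\Gamma_\gldim$, so every element satisfies the desired inequality.

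I expect the main conceptual point is the calibration of $h_3$: the drop $h_3(n) = h_3(n-1) - 1$ is precisely what makes the ``shifted'' last slice $\{(i,j,n) : i + j = n\}$ coplanar with the other slices $\{(i,j,k) : i + j = n + 1,\, k < n\}$ under the functional $h$, placing all of $\Gamma_\gldim$ on a single boundary hyperplane. Everything else is a routine consequence of the combinatorial structure of the upper-triangular action on supports.
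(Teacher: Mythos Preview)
Your proof is correct and follows essentially the same approach as the paper: both apply \cref{theorem:tensor moment polytopes franz} with $L=I$, observe that $\supp(U\cdot T)$ lies in the downward closure of $\Gamma_n$ for any $U\in\G_{\uppertriangular}$, and then verify the inequality $\langle (e_i,e_j,e_k),h\rangle\geq c$ on that downward closure via the two cases $k<n$ and $k=n$. Your added remark that $\Gamma_n$ lies exactly on the hyperplane (not merely in the halfspace), and that this reflects the calibration of $h_3$, is a nice conceptual point the paper leaves implicit.
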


\begin{proof}
    Let $\Gamma_n^\downarrow = \{(i,j,k) \mid \exists (a,b,c) \in \Gamma_n, (i,j,k) \leq (a,b,c)\}$ be the downward closure of $\Gamma_n$ (for the pointwise order). %
    If $(i,j,k) \in \Gamma_n^\downarrow$, then $i+j \leq \gldim+1$ and $k \in [\gldim-1]$, or $i+j\leq\gldim$ and $k=\gldim$.
    For every $U \in \GL_{\uppertriangular}$, $\supp(U \cdot T) \subseteq \Gamma_n^\downarrow$: indeed, if~$A$ is an upper-triangular matrix, the~$i'$-th entry of $A e_i$ is zero for~$i' > i$, and~$\supp(U \cdot T)$ being in the downward-closure of~$\supp(T) \subseteq \Gamma_n$ is the right analog of this statement for higher-order tensors.
    Let $\Omega_n = \{(e_i \sep e_j \sep e_k) \mid (i,j,k) \in \Gamma_n^\downarrow\}$.
    By \cref{theorem:tensor moment polytopes franz}, there is a $U \in \GL_{\uppertriangular}$ such that $\Delta(T) = \cap_L \conv \supp (LU \cdot T)$, where the intersection goes over all $L \in \GL_{\lowertriangular}$. We have $\cap_L \conv \supp (LU \cdot T) \subseteq \conv \supp (U\cdot T) \subseteq \conv \Omega_n$, since~$\supp(U \cdot T) \subseteq \Gamma_n^\downarrow$.
    To prove $\<p,h> \geq c$ holds for every $p \in \Delta(T)$, it thus suffices to prove that $\<p,h> \geq c$ holds for every $p \in \Omega_{\gldim}$. We check the two cases.
    \begin{itemize}
        \item If $k \in [\gldim-1]$, then $\<(e_i \sep e_j \sep e_k), h> = \gldim - 1 - (i+j-2) + \frac1{\gldim} \geq \frac1{\gldim} = c$, as $i+j \leq \gldim+1$.
        \item If $k = \gldim$, then $\<(e_i \sep e_j \sep e_k), h> = \gldim - 1 - (i+j-2) + \frac1{\gldim} - 1 \geq \frac1{\gldim} = c$, as $i+j \leq \gldim$.
    \end{itemize}
    This proves the claim. 
\end{proof}

With~$h$ and $c$ as in~\cref{eq:non-free tensors h c defn}, we define $q \in \R^n \times \R^n \times \R^n$ by
\begin{equation}
    \label{eq:q-defn}
    q = (u_\gldim \sep u_\gldim \sep u_\gldim) + c \frac{h}{\norm{h}^2}.
\end{equation}
For any vector $v \in \R^d$ we say that $v$ is non-negative if $v_i \geq 0$ for all $i \in [d]$, we say that $v$ is positive if $v_i > 0$ for all $i \in [d]$, and we say that $v$ is non-increasing if $v_1 \geq v_2 \geq \cdots \geq v_d$.
\begin{lemma}
    \label{lem:q-very-basic-properties}
    Let~$n \geq 2$.
    \begin{enumerate}[label=\upshape(\arabic*)]
    \item\label{item:q-is-orthogonal-projection} The vector~$q = (q_1 \sep q_2 \sep q_3)$ is the orthogonal projection of $(u_\gldim \sep u_\gldim \sep u_\gldim)$ onto the hyperplane $\{ p \mid \<p, h> = c \}$.
    \item\label{item:q-very-basic-weyl-chamber} For every $i \in [3]$, the vector $q_i$ is non-negative and non-increasing, with~$\sum_{j=1}^n (q_i)_j = 1$.
    \end{enumerate}
\end{lemma}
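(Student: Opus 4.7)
For part~\ref{item:q-is-orthogonal-projection}, I would invoke the standard formula for the orthogonal projection onto an affine hyperplane $\{p \mid \<p,h> = c\}$: the projection of a point~$x$ is $x + \frac{c - \<x,h>}{\norm{h}^2}\,h$. Setting $x = (u_\gldim \sep u_\gldim \sep u_\gldim)$, the claimed identity $q = x + c\, h/\norm{h}^2$ reduces to verifying $\<x, h> = 0$, i.e.\ $\<u_\gldim, h_i> = 0$ for each $i \in [3]$. For $i \in \{1,2\}$ this holds because $h_i$ is an arithmetic progression symmetric about zero and hence has zero sum, while $\<u_\gldim, h_3> = \<u_\gldim, u_\gldim - e_\gldim> = \tfrac{1}{\gldim} - \tfrac{1}{\gldim} = 0$.

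For part~\ref{item:q-very-basic-weyl-chamber}, write $q_i = u_\gldim + \tfrac{c}{\norm{h}^2}\, h_i$ for each $i \in [3]$. The sum-to-one property $\sum_j (q_i)_j = 1$ is immediate from the zero-sum observation just established, and the non-increasing property follows because $u_\gldim$ is constant while each $h_i$ is manifestly non-increasing. The only nontrivial point is non-negativity of the smallest entry of each~$q_i$. For $i \in \{1,2\}$ this smallest entry equals $\tfrac{1}{\gldim} - \tfrac{c}{\norm{h}^2}\cdot\tfrac{\gldim-1}{2}$, while for $i = 3$ it equals $\tfrac{1}{\gldim} - \tfrac{c}{\norm{h}^2}\cdot\tfrac{\gldim-1}{\gldim}$. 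Both reduce, after using $c = 1/\gldim$, to the single inequality $\norm{h}^2 \geq \tfrac{\gldim-1}{2}$.

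To establish this bound I would compute the three component norms explicitly: $\norm{h_1}^2 = \norm{h_2}^2 = \sum_{k=0}^{\gldim-1}\bigl(\tfrac{\gldim-1}{2}-k\bigr)^2 = \tfrac{\gldim(\gldim^2-1)}{12}$ and $\norm{h_3}^2 = (\gldim-1)\tfrac{1}{\gldim^2} + \bigl(1 - \tfrac{1}{\gldim}\bigr)^2 = \tfrac{\gldim-1}{\gldim}$, giving
\[
\norm{h}^2 = \tfrac{\gldim(\gldim^2-1)}{6} + \tfrac{\gldim-1}{\gldim} = (\gldim-1)\Bigl(\tfrac{\gldim(\gldim+1)}{6} + \tfrac{1}{\gldim}\Bigr),
\]
which is at least $(\gldim-1)$ for every $\gldim \geq 2$ and hence comfortably exceeds $\tfrac{\gldim-1}{2}$. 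Conceptually the lemma merely records that~$q$ is an orthogonal projection of $(u_\gldim \sep u_\gldim \sep u_\gldim)$ onto the supporting hyperplane of the inequality from~\cref{lem:non-free tensor inequality}; the only ``obstacle'' is the bookkeeping in the norm computation, which is entirely routine.
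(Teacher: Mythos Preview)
Your proof is correct and follows essentially the same approach as the paper: both parts reduce to verifying $\langle u_\gldim, h_i\rangle = 0$ and then computing $\norm{h}^2 = \tfrac{\gldim(\gldim^2-1)}{6} + \tfrac{\gldim-1}{\gldim}$ explicitly to bound the last entry of each~$q_i$. The only cosmetic difference is that the paper bounds $(h_i)_\gldim \geq -\norm{h}$ uniformly and reduces non-negativity to $\norm{h}\geq 1$, whereas you track the exact last entries and reduce to $\norm{h}^2 \geq \tfrac{\gldim-1}{2}$; both thresholds are verified from the same formula.
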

\begin{proof} (1) 
    The vector $h$ is a normal vector for the hyperplane, so the orthogonal projection adds a scalar multiple of $h$. 
    We see that $\langle (u_\gldim \sep u_\gldim \sep u_\gldim), h\rangle = \langle u_\gldim, h_1 \rangle + \langle u_\gldim, h_2 \rangle + \langle u_\gldim, h_3 \rangle = 0$, since each~$h_i$ has entries summing to~$0$. Therefore, $q = (u_\gldim \sep u_\gldim \sep u_\gldim) + c \frac{h}{\norm{h}^2}$.

    (2) %
    For every $i \in [3]$, $q_i$ is sorted non-increasingly, because $u_\gldim$ and $h_i$ are, and $c = 1/\gldim \geq 0$.
    Therefore we just have to establish that the last entry of $q_i$ is non-negative.
    Note that $(h_i)_\gldim \geq - \norm{h_i} \geq - \norm{h}$.
    Therefore,
    \begin{equation}\label{eq:item2pr}
        (q_i)_\gldim = \frac1\gldim \Bigl(1 + \frac{(h_i)_\gldim}{\norm{h}^2} \Bigr) \geq \frac1\gldim \Bigl(1 - \frac{1}{\norm{h}} \Bigr).
    \end{equation}
    We also have
    \[
        \norm{h_1}^2 = \norm{h_2}^2 = \sum_{k=0}^{\gldim-1} \left(\frac{\gldim-1-2k}{2}\right)^2 = \frac{1}{12} \gldim (\gldim^2 - 1),
    \]
    and $\norm{h_3}^2 = (1/\gldim)^2 (\gldim-1) + ((1/\gldim) - 1)^2 = (\gldim-1)/\gldim$.
    Therefore, $\norm{h}^2 = \gldim (\gldim^2 - 1) / 6 + (\gldim-1) / \gldim$, and then from $n\geq2$ %
    it follows that $\norm{h} \geq 1$.
    Combining this with \eqref{eq:item2pr} gives $(q_i)_\gldim \geq 0$.
    Lastly, $\sum_{j=1}^n (q_i)_j = \sum_{j=1}^n (u_n)_j + (h_i)_j = 1 + 0 = 1$ for~$i \in [3]$.
\end{proof}
The non-negativity and normalization in~\cref{lem:q-very-basic-properties}\ref{item:q-very-basic-weyl-chamber} in particular imply that~$(q_1 \sep q_2 \sep q_3)$ has the potential to appear in the image of the moment map~$\mu$ (recall that we identify points in~$\R^n \times \R^n \times \R^n$ with triples of diagonal matrices).
These two lemmas will later be used through the following corollary, allowing us to verify the conditions of~\cref{thm:free-tensor-has-free-tensor-in-minimal-K-orbit}.
\begin{corollary}
    \label{cor:T-with-supp-Gamma-and-mu-q-is-minnorm}
    Let~$T \in \C^n \ot \C^n \ot \C^n \setminus \{0\}$ be such that~$\supp(T) \subseteq \Gamma_n$ and~$\mu(T) = q$. %
    Then~$q$ is the minimum-norm point of~$\Delta(T)$.
\end{corollary}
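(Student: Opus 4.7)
The plan is to combine the two preceding lemmas with a direct geometric argument: the hyperplane inequality of \cref{lem:non-free tensor inequality} cuts $\Delta(T)$ out of a larger half-space, and $q$ will be realized as the unique minimum-norm point of that half-space once we account for the trace-one normalization built into the moment map.

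First I would check that $q$ actually lies in $\Delta(T)$. This is immediate: by \cref{lem:q-very-basic-properties}\ref{item:q-very-basic-weyl-chamber} each component of $q$ is non-increasing, so $q \in \weylchamber$; combined with $\mu(T) = q$ this gives $q \in \mu(\G \cdot [T]) \cap \weylchamber \subseteq \Delta(T)$. So it only remains to show that no other element of $\Delta(T)$ has smaller norm.

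Next I would identify the ambient affine subspace. Every component of the moment map has unit trace (visible from \cref{eq:tensor-moment-map}: $\tr \mu_1(T) = \|T\|^{-2}\sum_{i,j,k} |T_{i,j,k}|^2 = 1$), hence
\[
  \Delta(T) \;\subseteq\; A \coloneqq \bigl\{(p_1 \sep p_2 \sep p_3) \in \R^\gldim \times \R^\gldim \times \R^\gldim \,\big|\, \textstyle\sum_i (p_\ell)_i = 1 \text{ for each } \ell \in [3]\bigr\}.
\]
Combining this with \cref{lem:non-free tensor inequality} gives $\Delta(T) \subseteq A \cap H^+$ where $H^+ = \{p : \langle p, h\rangle \geq c\}$. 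So it suffices to show that $q$ is the (unique) minimum-norm point of $A \cap H^+$.

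Finally I would perform the projection argument. The linear subspace $V_0$ parallel to $A$ consists of those triples whose components sum to zero, and by the definitions in \cref{eq:non-free tensors h c defn} the vector $h$ lies in $V_0$. Minimizing $\|p\|^2$ over $A$ alone yields $p = (u_\gldim \sep u_\gldim \sep u_\gldim)$ (by a one-line Lagrange computation, or just because this is the orthogonal projection of the origin onto $A$). On the half-space $A \cap H^+$, the minimum either stays at $(u_\gldim \sep u_\gldim \sep u_\gldim)$—but this point has $\langle (u_\gldim \sep u_\gldim \sep u_\gldim), h\rangle = 0 < c$, so it is infeasible—or is attained on the boundary hyperplane $A \cap \{\langle p, h\rangle = c\}$, at the orthogonal projection of $(u_\gldim \sep u_\gldim \sep u_\gldim)$ onto it. Since $h \in V_0$ is parallel to $A$, that projection stays inside $A$ and is exactly $(u_\gldim \sep u_\gldim \sep u_\gldim) + c h/\|h\|^2 = q$ by \cref{lem:q-very-basic-properties}\ref{item:q-is-orthogonal-projection}. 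Thus $q$ is the minimum-norm point of the superset $A \cap H^+ \supseteq \Delta(T)$; since $q \in \Delta(T)$, it is the minimum-norm point of $\Delta(T)$ as well. There is no real obstacle here—the two lemmas already do the work and the remaining content is just verifying that the unit-trace constraint makes the projection of $(u_\gldim \sep u_\gldim \sep u_\gldim)$, rather than the projection of the origin, the relevant minimizer.
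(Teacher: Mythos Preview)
Your proposal is correct and follows essentially the same approach as the paper: both argue that $q\in\Delta(T)$ via \cref{lem:q-very-basic-properties}, then use the trace-one affine constraint together with the hyperplane inequality of \cref{lem:non-free tensor inequality} to bound the norm from below, yielding $q$ as the minimizer. The only cosmetic difference is that the paper writes out the orthogonal decomposition $p_i = p_i^\top + p_i^\perp$ and applies Cauchy--Schwarz explicitly to get $\norm{p}^2 \geq \tfrac{3}{n} + c^2/\norm{h}^2 = \norm{q}^2$, whereas you phrase the same computation geometrically as ``project the origin onto $A$, then onto $A\cap\{\langle\cdot,h\rangle=c\}$''; the underlying content is identical.
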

\begin{proof}
    First observe that, since~$q \in \weylchamber$ by \cref{lem:q-very-basic-properties} and~$\mu(T) = q$, $q \in \weylchamber \cap \mu(\overline{\G \cdot [T]}) = \Delta(T)$.
    
    Next, we will show that for every~$p \in \Delta(T)$, $\norm{p}^2 \geq \norm{q}^2$.
    We first compute
    \[
        \norm{q}^2 = \norm*{(u_n \sep u_n \sep u_n) + c \frac{h}{\norm{h}^2}}^2
        = \norm*{(u_n \sep u_n \sep u_n)}^2 + \frac{c^2}{\norm{h}^2}
        = \frac{3}{n} + \frac{c^2}{\norm{h}^2}.
    \]
    Now we aim to show that every~$p \in \Delta(T)$ satisfies~$\norm{p}^2 \geq \norm{q}^2$.
    Every~$p = (p_1 \sep p_2 \sep p_3) \in \Delta(T)$ satisfies~$\sum_{j=1}^n (p_i)_j = 1$ for~$i \in [3]$, i.e., $\langle p_i, u_n \rangle = \frac1n$.
    There exists a unique decomposition~$p_i = p_i^\top + p_i^\perp$ where~$p_i^\perp \perp u_n$ for every~$i \in [3]$ and~$p_i^\top = \langle p_i, u_n \rangle u_n / \norm{u_n}^2$; then
    \[
        \norm{p_i}^2 = \norm{p_i^\top}^2 + \norm{p_i^\perp}^2 = \frac{\abs{\langle p_i, u_n \rangle }^2}{\norm{u_n}^2} + \norm{p_i^\perp}^2 = \frac1n + \norm{p_i^\perp}^2.
    \]
    We also have the inequality~$\langle p, h \rangle \geq c$ for every~$p \in \Delta(T)$ (\cref{lem:non-free tensor inequality}). Since~$h = (h_1 \sep h_2 \sep h_3)$ is such that~$h_i \perp u_n$ for every~$i \in [3]$, we find
    \[
        c^2 \leq \abs{\langle p, h \rangle}^2 = \abs{\langle p^\perp, h \rangle}^2 \leq \norm{p^\perp}^2 \norm{h}^2,
    \]
    where the inequality is the Cauchy--Schwarz inequality.
    Therefore for every~$p \in \Delta(T)$,
    \[
        \norm{p}^2 = \sum_{i=1}^3 \norm{p_i}^2 = \norm{p^\perp}^2 + \frac{3}{n^2 \norm{u_n}^2} \geq \frac{c^2}{\norm{h}^2} + \frac{3}{n^2 \norm{u_n}^2}
        = \frac{c^2}{\norm{h}^2} + \frac{3}{n} = \norm{q}^2,
    \]
    establishing that~$q$ is the minimum-norm point of~$\Delta(T)$.
\end{proof}

\begin{remark}
    It is also possible to use Ness' theorem~(\cref{lem:ness-checking-minimality}) to show that the inequality~$\langle \cdot, h \rangle \geq c$ holds for~$\Delta(T)$, at least if one can show there exists~$S \in \G \cdot T$ with~$\supp(S) \subseteq \Gamma_n$ and~$\mu(S) = q$, where~$q$ is the minimum-norm point on the hyperplane~$\{ p \mid \langle p,h \rangle = c \}$.
    Indeed, this~$q$ would be such that~$\exp(t \mu(S)) \cdot S = \exp(t q) \cdot S = \exp(t \lambda) \cdot S$ for some~$\lambda \in \R$ (by essentially the same computation as above, showing that~$\langle (e_i,e_j,e_k), q \rangle$ is constant for~$(i,j,k) \in \Gamma_n$).
    \Cref{lem:ness-checking-minimality} implies that~$q$ is the minimum-norm point in~$\Delta(T)$, and hence the inequality~$\langle p, q \rangle \geq \langle q,q \rangle$ holds for every~$p \in \Delta(T)$.
    This inequality is in turn equivalent to~$\langle p, h \rangle \geq c$.
    The tensor we will construct in this section does in fact have support contained in~$\Gamma_n$ and moment map image equal to~$q$, so~$q$ is the minimum-norm point in the tensor's moment polytope.
\end{remark}

\subsection{Explicit construction of a non-free tensor}
\label{subsection:proof-main-theorem}

\subsubsection*{Proof overview}
Now we want to construct a tensor with support contained in $\Gamma_n$ such that the tensor is not free. We thus need to choose coefficients on this support.
In our construction we will keep track of these coefficients in the following way.
Given a matrix $W \in \C^{n \times (n-1)}$ and vector $a \in \C^{n-1}$, we define the tensor $T^{W,a}$ by $T^{W,a}_{n+1-i,i,k} = W_{i,k}$ for $i \in [n]$ and $k < n$, $T^{W,a}_{n-i,i,n} = a_i$ for $i \in [n-1]$, and $T_{i,j,k} = 0$ for all other $(i,j,k) \in [n]^3$. 
For example, for $n = 4$ we have
\begin{equation*}
  T^{W,a} =
  \left[\arraycolsep=3pt\def\arraystretch{0.8}
    \begin{array}{cccc|cccc|cccc|cccc}
      \grayzero&\grayzero&\grayzero&\grayzero& \grayzero&\grayzero&\grayzero&\grayzero& \grayzero&\grayzero&\grayzero&a_1& W_{11}&W_{12}&W_{13}&\grayzero \\
      \grayzero&\grayzero&\grayzero&\grayzero& \grayzero&\grayzero&\grayzero&a_2& W_{21}&W_{22}&W_{23}&\grayzero& \grayzero&\grayzero&\grayzero&\grayzero \\
      \grayzero&\grayzero&\grayzero&a_3& W_{31}&W_{32}&W_{33}&\grayzero& \grayzero&\grayzero&\grayzero&\grayzero& \grayzero&\grayzero&\grayzero&\grayzero \\
      W_{41}&W_{42}&W_{43}&\grayzero& \grayzero&\grayzero&\grayzero&\grayzero& \grayzero&\grayzero&\grayzero&\grayzero& \grayzero&\grayzero&\grayzero&\grayzero
    \end{array}
  \right].
\end{equation*}
Our strategy is to first choose $W$ and $a$ such that $\mu(T^{W,a}) = q$.
Requirements on the tensor $T^{W,a}$ such that $\mu(T^{W,a}) = q$ translate directly into requirements on $W$ and $a$. We will denote the set of suitable $W$ by $\mathcal W_n \subseteq \C^{n\times (n-1)}$. It turns out we can choose $a$ independently of $W$, so we will fix its value and write $T^W$ in place of $T^{W,a}$.
We will show that $\mathcal W_n$ is non-empty (essentially through an application of the Schur--Horn theorem).

Having established that $\mu(T^{W}) = q$, 
\cref{cor:T-with-supp-Gamma-and-mu-q-is-minnorm} will tell us that $\mu(T^{W})$ is the minimum-norm point in the moment polytope $\Delta(T^{W})$.
This allows us to invoke our freeness condition from \cref{thm:free-tensor-has-free-tensor-in-minimal-K-orbit} which says: if $T^{W}$ is free, then its unitary orbit contains a tensor with free support. 
It then remains to show that no tensor in the unitary orbit of~$T^{W}$ has free support. 

To show this, the approach is as follows.
We suppose~$T^{W}$ has a tensor $S$ with free support in its unitary orbit. Then we show that $S$ equals (up to local permutation and scaling, which do not affect the freeness of the support)~$T^{W u}$ for some unitary~$u \in \U_{n-1}$ (using \cref{lemma:tensor moment map equivariance,lem:free-implies-diagonal-moment-map}). However, it turns out that~$\mathcal{W}_n$ is closed under right-multiplication with unitary matrices, so we may without loss of generality assume $T^{W}$ has free support.
To finish, we then show that any~$W \in \mathcal{W}_n$ must have a row with at least 2 non-zero entries, which implies that~$T^{W}$ does not have free support.

In~\cref{subsection:non-free-01-tensor} we will show that the~$T^W$ are equivalent to the tensor from~\cref{thm:explicit non-free in balanced format}, whose coefficients are all in~$\{0,1\}$.

\subsubsection*{Definition of \texorpdfstring{$\mathcal{W}_n$}{W_n} and theorem statement}
We will now describe the requirements on $W$ and $a$.
Let~$b \in \R^n$ by taking
\begin{equation}
  \label{eq:non-free tensors a}
  b_j = \sum_{\ell=1}^j (q_2)_\ell - (q_1)_{n+1-\ell}.
\end{equation}
for $j \in [n]$.
Define~$w \in \R^n$ by taking
\[
  w_j = \sqrt{\frac{1-(q_3)_n}{n-1} - (q_2)_j + b_j}
\]
for $j \in [n]$
(these are square roots of non-negative numbers by \cref{lem:a properties}\ref{item:w-positive}).
Let~$\mathcal{W}_n$ be the set of all matrices~$W \in \C^{n \times (n-1)} $ satisfying:
\begin{enumerate}[label=\upshape(\arabic*)]
  \item $W^*W = \frac{1 - (q_3)_n}{n-1} I_{n-1}$.
  \item $WW^* = \frac{1-(q_3)_n}{n-1} I_n - w w^*$.
\end{enumerate}
Define $a \in \R^n$ by taking $a_j = \sqrt{b_j}$ for~$j \in [n]$ ($b_j$ is non-negative by \cref{lem:a properties}\ref{item:bj-nonnegative}).
Recall that we write $T^{W}$ as a shorthand for $T^{W,a}$.

\begin{theorem}
\label{thm:explicit-concise-nonfree-tensor}
Let~$n \geq 3$. Then the set~$\mathcal{W}_n$ is not empty, and for every $W \in \mathcal{W}_n$, the tensor~$T^W \in \C^n \ot \C^n \ot \C^n$ is non-free and concise.
\end{theorem}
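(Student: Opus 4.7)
The plan follows the four-step strategy outlined just before the theorem.

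First, I would verify non-emptiness of $\mathcal{W}_n$. The two conditions on $W$ prescribe its singular value decomposition: $n-1$ equal singular values $\sqrt{(1-(q_3)_n)/(n-1)}$ with left singular vectors spanning $w^\perp$. Consistency boils down to the identity $\|w\|^2 = (1-(q_3)_n)/(n-1)$, which should follow from a short manipulation of the explicit formulas for $w$, $b$, and $q$; granted this, an explicit $W \in \mathcal{W}_n$ is $\sqrt{(1-(q_3)_n)/(n-1)}\, V$ for any $n \times (n-1)$ isometry $V$ with columns orthonormal to $w$. Next I would verify $\mu(T^W) = q$ directly from \Cref{def:moment-map-tensors}: the support $\Gamma_n$ ensures that most off-diagonal inner products of slices/rows/columns vanish, and the formulas for $b_j$, $a_j = \sqrt{b_j}$, and $w_j$ are engineered so that the defining conditions on $W$ yield $\mu_i(T^W) = q_i$ for $i = 1, 2, 3$.

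Second, since $q \in \weylchamber$, \Cref{cor:T-with-supp-Gamma-and-mu-q-is-minnorm} gives that $q$ is the minimum-norm point in $\Delta(T^W)$, and \Cref{thm:free-tensor-has-free-tensor-in-minimal-K-orbit} applies in the contrapositive. Suppose for contradiction $T^W$ is free; then there is $k = (k_1, k_2, k_3) \in \K$ with $k \cdot T^W$ of free support. By \Cref{lemma:tensor moment map equivariance,lem:free-implies-diagonal-moment-map}, $\mu(k \cdot T^W) = (k_i q_i k_i^*)_i$ is a tuple of diagonal matrices. The vectors $q_1 = q_2$ have pairwise distinct entries (since $h_1 = h_2$ is strictly decreasing) and $q_3$ has its first $n-1$ entries equal with the last distinct (since $h_3 = u_n - e_n$). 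Consequently $k_1, k_2$ are permutations times diagonal unitaries, and $k_3$ has the form $\pi \cdot \mathrm{diag}(U, z)$ for some permutation $\pi$, $U \in \U_{n-1}$, and $z \in \U_1$. Local permutations preserve freeness, while diagonal phases and $z$ preserve the support outright, so up to these operations $k \cdot T^W$ coincides with $T^{W u'}$ for some $u' \in \U_{n-1}$; in particular $T^{W u'}$ has free support. Since $\mathcal{W}_n$ is manifestly closed under right-multiplication by $\U_{n-1}$ (both defining conditions are invariant under $W \mapsto W u$), the problem reduces to showing $T^W$ itself has non-free support for every $W \in \mathcal{W}_n$.

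Third, I would show that every $W \in \mathcal{W}_n$ has at least one row with two or more nonzero entries --- this is enough because otherwise two triples $(n+1-i, i, k), (n+1-i, i, k') \in \supp T^W$ with $k \neq k'$ would differ in only one coordinate. Assuming each row of $W$ has at most one nonzero entry, $WW^*$ decomposes as an orthogonal direct sum of rank-one blocks indexed by the nonzero columns of $W$, plus zero rows. Comparing with $WW^* = \frac{1-(q_3)_n}{n-1}I_n - ww^*$ (nonzero spectrum $\{\frac{1-(q_3)_n}{n-1}\}^{\times(n-1)}$, kernel spanned by $w$), a short case analysis --- either all blocks are $1 \times 1$ with a single zero row, or one block is $2 \times 2$ and the rest are $1 \times 1$ --- forces $\lvert\supp w\rvert \leq 2$. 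But a direct calculation gives $w_j^2 = (c/\|h\|^2)\bigl[\tfrac{1}{n} + j(n+1-j) - \tfrac{n+1}{2}\bigr]$, and since $j(n+1-j) \geq n > \tfrac{n+1}{2} - \tfrac{1}{n}$ for all $j \in [n]$ and $n \geq 2$, all $n$ entries of $w$ are strictly positive, contradicting $\lvert\supp w\rvert \leq 2$ for $n \geq 3$. Conciseness of $T^W$ is a bonus: $\mu(T^W) = q$ with every $(q_i)_j > 0$ (using the proof of \Cref{lem:q-very-basic-properties}) implies each flattening of $T^W$ has trivial kernel. The main obstacle I anticipate is the moment map computation in Step~1 and the block-structure case analysis in Step~3, both requiring careful bookkeeping with the explicit formulas for $a, b, w$.
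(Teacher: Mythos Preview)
Your proposal is correct and follows the paper's strategy almost verbatim: construct $W$ via an explicit isometry onto $w^\perp$, verify $\mu(T^W)=q$ from the defining conditions on $W$ and $a$, invoke \cref{cor:T-with-supp-Gamma-and-mu-q-is-minnorm} and \cref{thm:free-tensor-has-free-tensor-in-minimal-K-orbit}, reduce via the stabilizer of $q$ to $T^{Wu}$ with $Wu\in\mathcal W_n$, and finish using entrywise positivity of $w$.

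The only notable divergence is in the last step. Your block-decomposition argument (forcing $\lvert\supp w\rvert\le 2$) is valid, but the paper's version is shorter: since $w$ is entrywise positive, \emph{every} off-diagonal entry of $WW^*=\lambda I_n-ww^*$ is nonzero, so no two distinct rows of $W$ can be orthogonal; but if each row of $W$ had at most one nonzero entry, any two rows whose nonzero entries sit in different columns (and such rows exist because $W^*W=\lambda I_{n-1}$ forces each of the $n-1\ge 2$ columns to be nonzero) would have disjoint support and hence be orthogonal. This bypasses your case analysis entirely while using the same key fact about $w$. Your approach has the mild advantage of making the obstruction quantitative ($\lvert\supp w\rvert\le 2$ versus $n\ge 3$), but the paper's argument is what you should aim for in a clean write-up.
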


\subsubsection*{Technical properties}

Before we begin with the proof proper, we first establish various properties of the~$b_j$'s which will be relevant for the proof. 
\begin{lemma}
  \label{lem:a properties}
  Let~$n \geq 2$.
  \begin{enumerate}[label=\upshape(\arabic*)]
  \item\label{item:bj-nonnegative} 
  For $j \in [n-1]$ we have $b_j > 0$, and $b_n = 0$. 
  \item\label{item:b-normalization} $\sum_{j=1}^{n-1} b_j = (q_3)_n$.
  \item\label{item:bj-gaps} Writing~$b_0 = 0$, we have~$(q_2)_j - b_j = (q_1)_{n+1-j} - b_{j-1}$ for~$j \in [n]$.
  \item \label{item:w-positive} 
  For~$j \in [n]$, $0 \leq (q_2)_j - b_j < \frac{1-(q_3)_n}{n-1}$.
  \end{enumerate}
\end{lemma}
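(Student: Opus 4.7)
The plan is to derive a closed form for $b_j$, from which all four claims become short computations using only the explicit value of $\norm{h}^2$ computed in the proof of \cref{lem:q-very-basic-properties}. The key observation is that $h_1 = h_2$ is an arithmetic progression symmetric about~$0$, so $(h_2)_{n+1-\ell} = -(h_2)_\ell$. Combined with $q_1 = q_2$ and $q = (u_n \sep u_n \sep u_n) + c\, h/\norm{h}^2$, this gives
\[
(q_2)_\ell - (q_1)_{n+1-\ell} \,=\, \frac{2c\,(h_2)_\ell}{\norm{h}^2} \,=\, \frac{n+1-2\ell}{n\norm{h}^2},
\]
using $c = 1/n$ and $(h_2)_\ell = (n+1-2\ell)/2$. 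Summing over $\ell = 1, \dotsc, j$ produces
\[
b_j \,=\, \frac{1}{n\norm{h}^2}\sum_{\ell=1}^j (n+1-2\ell) \,=\, \frac{j(n-j)}{n\norm{h}^2}.
\]

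With this closed form, claim~(1) is immediate since $j(n-j) > 0$ for $1 \leq j \leq n-1$ and vanishes at $j=n$. Claim~(3) also follows directly: the telescoping identity $b_j - b_{j-1} = (n+1-2j)/(n\norm{h}^2)$ is precisely $(q_2)_j - (q_1)_{n+1-j}$ by the computation above, which (with $b_0 = 0$) rearranges to the asserted equation.

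For claim~(2), I would use $\sum_{j=1}^{n-1} j(n-j) = n(n^2-1)/6$ to obtain $\sum_{j=1}^{n-1} b_j = (n^2-1)/(6\norm{h}^2)$. On the other hand, from $(q_3)_n = \frac{1}{n} + \frac{(h_3)_n}{n\norm{h}^2} = \frac{1}{n} - \frac{n-1}{n^2\norm{h}^2}$ together with the explicit value $\norm{h}^2 = n(n^2-1)/6 + (n-1)/n$ used in the proof of \cref{lem:q-very-basic-properties}, a one-line rearrangement yields $(q_3)_n = (n^2-1)/(6\norm{h}^2)$, matching $\sum b_j$.

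For claim~(4), first I note the cosmetic identity $\frac{1-(q_3)_n}{n-1} = (q_3)_1 = \frac{1}{n} + \frac{1}{n^2\norm{h}^2}$, which holds because the components of~$q_3$ sum to~$1$ and are equal on the first $n-1$ coordinates. The quantity $f(j) := (q_2)_j - b_j$ is a convex quadratic in $j$ attaining its minimum at $j = (n+1)/2$ with value $\frac{1}{n} - \frac{n^2-1}{4n\norm{h}^2}$ and its maximum on $[1,n]$ at the endpoints $j \in \{1,n\}$ with value $\frac{1}{n} - \frac{n-1}{2n\norm{h}^2}$. The lower bound in~(4) then reduces to $4\norm{h}^2 \geq n^2-1$, which follows directly from $\norm{h}^2 \geq n(n^2-1)/6$ for $n \geq 2$; the upper bound reduces to the trivial inequality $-\frac{n-1}{2n\norm{h}^2} < \frac{1}{n^2\norm{h}^2}$. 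The only real obstacle is the bookkeeping involved in rearranging expressions in $\norm{h}^2$; no step is substantively hard once $b_j$ is in closed form.
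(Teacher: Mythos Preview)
Your argument is correct and follows essentially the same approach as the paper: both reduce to the explicit computation using $q = (u_n \sep u_n \sep u_n) + c\,h/\norm{h}^2$ and the value $\norm{h}^2 = n(n^2-1)/6 + (n-1)/n$. The one notable difference is organizational: you first extract the closed form $b_j = j(n-j)/(n\norm{h}^2)$ and then reuse it throughout, whereas the paper re-derives the relevant partial sums at each step. This makes your treatment of~(4) in particular cleaner---you use convexity of the quadratic $f(j) = (q_2)_j - b_j$ to locate its extremes on $[1,n]$ and reduce the two bounds to the single-line inequalities $4\norm{h}^2 \geq n^2-1$ and $-\tfrac{n-1}{2n\norm{h}^2} < \tfrac{1}{n^2\norm{h}^2}$, while the paper proves the lower bound via an induction on a degree-four polynomial inequality in~$n$. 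Your identification $\tfrac{1-(q_3)_n}{n-1} = (q_3)_1$ is also a nice shortcut that the paper does not make explicit. In substance, though, both arguments are the same direct computation.
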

\begin{proof}
  (1)
  Recall that~$q = (u_n \sep u_n \sep u_n) + c \frac{h}{\norm{h}^2}$
  where~$c = \frac1n$ and~$h = (h_1 \sep h_2 \sep h_3)$ is as defined in~\cref{eq:non-free tensors h c defn}.
  Therefore~$b_j \geq 0$ reduces to
  \begin{equation*}
    \sum_{\ell=1}^j (h_2)_\ell - (h_1)_{n+1-\ell} \geq 0
  \end{equation*}
  for~$j \in [n]$.
  This quickly follows from~$h_1 = h_2 = (\frac{n-1}{2}, \frac{n-1}{2} - 1, \dotsc, -\frac{n-1}{2})$: we have~$\sum_{\ell=1}^j (h_2)_\ell \geq 0$ and $\sum_{\ell=1}^j (h_2)_\ell = - \sum_{\ell=1}^j (h_1)_{n+1-\ell}$.
  The inequality also becomes strict when~$j \in [n-1]$.

  (2)
  Observe that
  \begin{align*}
    \sum_{j=1}^{n-1} b_j & = \frac{c}{\norm{h}^2} \sum_{j=1}^{n-1} \sum_{\ell=1}^j (h_2)_\ell - (h_1)_{n+1-\ell} \\
               & = \frac{c}{\norm{h}^2} \sum_{j=1}^{n-1} \sum_{\ell=1}^j \big(n - 1 - 2 (\ell-1)\big) \\
               & = \frac{c}{\norm{h}^2} \sum_{j=1}^{n-1} j (n-j)\\
               & = \frac{c}{\norm{h}^2} \cdot \frac{1}{6} (n+1) n (n-1) \\
               & = \frac{c}{\norm{h}^2} \Big(\norm{h}^2 - \frac{n-1}{n}\Big) \\
               & = \frac{1}{n} + \frac{c}{\norm{h}^2} (h_3)_n = (q_3)_n
  \end{align*}
  using that~$\norm{h}^2 = n (n^2 - 1)/6 + (n-1)/n$ and~$(h_3)_n = 1/n - 1$.

  (3)
  The equality
    $b_j - b_{j-1}  = (q_2)_j - (q_1)_{n+1-j}$
  follows directly from the definition of~$b_j$.

  Finally we prove the bounds on~$(q_2)_j - b_j$; first we evaluate
  \begin{align*}
    (q_2)_j - b_j & = \frac{1}{n} + \frac{1}{n} \frac{\frac{n-1}{2} - 2(j-1)}{\norm{h}^2} - \sum_{\ell=1}^j ((q_2)_\ell - (q_1)_{n+1-\ell}) \\
                          & = \frac{1}{n} + \frac{1}{n} \frac{\frac{n-1}{2} - 2(j-1)}{\norm{h}^2} - \frac{1}{n} \frac{1}{\norm{h}^2} j (n-j) \\
                          & = \frac{1}{n} + \frac{1}{n} \frac{\frac{n-1}{2} - 2(j-1) - j (n-j)}{\norm{h}^2}.
  \end{align*}

  (4) We now prove~$(q_2)_j - b_j \geq 0$.
  It suffices to have
  \[
    \frac{n-1}{2} - 2(j-1) - j (n-j) \geq -\norm{h}^2 = -\left(\frac{1}{6} (n+1)n(n-1) + \frac{n-1}{n}\right).
  \]
  The left-hand side is minimized for~$j = \frac{n}{2} + 1$ where it attains the value~$\frac{3}{4} - \frac{1}{4} (n+1)^2$.
  Rewriting the above shows that the inequality is equivalent to
  \[
    \frac{n^4}{6} - \frac{n^3}{4} - \frac{2n^2}{3} + \frac{3 n}{2} - 1 \geq 0.
  \]
  Equality holds for~$n=2$; moreover, the difference between its value for~$n=m+1$ and~$n=m$ is
  \[
    \frac{2m^3}{3} + \frac{m^2}{4} - \frac{17m}{12} + \frac{3}{4}
  \]
  which is non-negative for~$m \geq 2$, as~$\frac{2m^3}{3} \geq \frac{17m}{12}$. This proves~$(q_2)_j - b_j \geq 0$.

  Lastly we show $(q_2)_j - b_j < \frac{1-(q_3)_n}{n-1}$, completing the proof of (4).
  Recall that~$(q_3)_n = \frac{1}{n} + c \frac{(h_3)_n}{\norm{h}^2} = \frac{1}{n} + \frac{1/n - 1}{n \norm{h}^2}$.
  Therefore we must show
  \begin{align*}
    \frac{1}{n} + \frac{1}{n} \frac{\frac{n-1}{2} - 2(j-1) - j (n-j)}{\norm{h}^2}
    = (q_2)_j - b_j
    & <
    \frac{1 - (q_3)_n}{n-1} =
    \frac{1}{n-1} \left(1 - \frac{1}{n} - \frac{1/n - 1}{n \norm{h}^2}\right) \\
    & = \frac{1}{n-1} \left(\frac{n-1}{n} - \frac{1-n}{n^2 \norm{h}^2}\right) \\
    & = \frac{1}{n} + \frac{1}{n^2 \norm{h}^2}.
  \end{align*}
  or equivalently
  \[
  \frac{n-1}{2} - 2(j-1) - j (n-j)
  <
  \frac{1}{n}
  \]
  for~$j \in [n]$.
  The left-hand side is maximized for~$j=1$ where its value is
  \[
    \frac{n-1}{2} - (n-1) = -\frac{n-1}{2}
  \]
  which is strictly smaller than $\frac{1}{n}$, given that~$n \geq 2 > 0$. This concludes the estimate~$(q_2)_j - b_j < (1-(q_3)_n) / (n-1)$.
\end{proof}

\subsubsection*{Construction of \texorpdfstring{$W \in \mathcal W_n$}{W}}
To construct an element of $\mathcal W_n$, we first prove the following more abstract lemma, concerning the existence of matrices~$W$ with similar conditions as in the definition of~$\mathcal{W}_n$.
\begin{lemma}
  \label{lem:special-schur-horn}
  Let~$n \geq 2$, $\lambda \in \R_{\geq 0}$, $d \in \R^n$ such that~$0 \leq d_i \leq \lambda$ for all~$i \in [n]$, and~$\sum_{i=1}^n d_i = (n-1) \lambda$.
  Then there exists a matrix~$W\in \C^{n \times (n-1)}$ satisfying
  \begin{enumerate}[label=\upshape(\arabic*)]
    \item $W^*W = \lambda I_{n-1}$, and
    \item %
    $WW^* = \lambda I_n  - ww^*$ with~$w \in \R^n$ given by~$w_i = \sqrt{\lambda - d_i}$ for~$i \in [n]$.
  \end{enumerate}
  In particular,~(2) implies that~$WW^*$ has diagonal entries given by~$d$, and spectrum~$(\lambda, \dotsc, \lambda, 0)$ since~$WW^*$ and~$W^*W$ are isospectral.
\end{lemma}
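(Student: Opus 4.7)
The plan is to bypass the Schur--Horn theorem as a black box and construct $W$ directly, exploiting the observation that the explicit matrix $M \coloneqq \lambda I_n - w w^*$ already realises \emph{both} the prescribed diagonal $d$ and the ``right'' spectrum. A spectral decomposition of $M$ will then immediately yield the desired $W$.

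First, I would verify that $M$ is Hermitian and positive semidefinite, with $\spec(M) = (\lambda, \dots, \lambda, 0)$ (that is, $n-1$ copies of $\lambda$ and one zero). Hermiticity is immediate; for the spectrum, $w w^*$ is of rank at most one, with its unique potentially nonzero eigenvalue equal to
\[
  \norm{w}^2 = \sum_{i=1}^n (\lambda - d_i) = n\lambda - (n-1)\lambda = \lambda,
\]
using the trace condition $\sum_i d_i = (n-1)\lambda$. Hence $M = \lambda I_n - ww^*$ is PSD with spectrum $(\lambda,\dots,\lambda,0)$. A direct computation also shows that the $(i,i)$-entry of $M$ equals $\lambda - \abs{w_i}^2 = d_i$, which handles the remark after the statement.

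Next, spectrally decompose $M = \sum_{i=1}^{n-1} \lambda\, v_i v_i^*$ for an orthonormal family $v_1, \dots, v_{n-1} \in \C^n$ spanning $\mathrm{im}(M)$ (these exist because $\lambda$ has multiplicity $n-1$), and set
\[
  W \coloneqq \sqrt{\lambda}\, [\,v_1 \mid v_2 \mid \cdots \mid v_{n-1}\,] \in \C^{n \times (n-1)}.
\]
By orthonormality, $(W^*W)_{ij} = \lambda \langle v_i, v_j \rangle = \lambda \delta_{ij}$, proving (1), while $WW^* = \sum_{i=1}^{n-1} \lambda\, v_i v_i^* = M = \lambda I_n - w w^*$, proving (2). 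I do not anticipate any real obstacle: once one spots that the rank-one correction $\lambda I_n - w w^*$ realises both the diagonal $d$ and the spectrum $(\lambda,\dots,\lambda,0)$ simultaneously, the factorisation via the spectral theorem is immediate. The hypotheses $0 \leq d_i \leq \lambda$ and $\sum_i d_i = (n-1)\lambda$ are precisely what guarantees that $w \in \R^n$ is well defined and that the rank-one perturbation hits the correct eigenvalue.
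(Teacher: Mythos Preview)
Your proposal is correct and essentially identical to the paper's proof: the paper defines $H = \lambda I_n - ww^*$, verifies its diagonal entries and spectrum exactly as you do, and then takes $W = \sqrt{\lambda}\,[w^1 \mid \dotsb \mid w^{n-1}]$ where $w^1,\dots,w^{n-1}$ is an orthonormal basis of the orthogonal complement of $w$, which is the same as your eigenvector basis of $\mathrm{im}(M)$. The only cosmetic difference is that the paper phrases the choice of columns via the kernel/orthogonal complement rather than via the spectral decomposition.
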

\begin{proof}
  Let~$w \in \R^n$ be given by~$w_i = \sqrt{\lambda - d_i}$.
  Note that~$w \neq 0$: not all~$d_i$ can be~$\lambda$, as~$0 \leq d_i$ and~$\sum_{i=1}^n d_i = (n-1) \lambda$.
  Let~$H = \lambda I_n - w w^*$.
  Then~$H$ is Hermitian, and its~$i$-th diagonal entry is given by~$\lambda - (\lambda - d_i) = d_i$.
  Moreover~$\norm{w}^2 = n \lambda - \sum_{i=1}^n d_i = \lambda$, and so the eigenvalues of~$H$ are given by~$\lambda$ with multiplicity~$n-1$ and~$0$ with multiplicity~$1$.
  The kernel is spanned by~$w$.
  Let~$w^1, \dotsc, w^{n-1} \in \R^n$ be an orthonormal basis of the orthogonal complement of~$w$.
  Set~$W = \sqrt{\lambda} [w^1 \sep \dotsc \sep w^{n-1}]$.
  Then~$W^* W = \lambda I_{n-1}$ by virtue of orthogonality of the~$w^j$, and~$H = \lambda I_n - w w^* = \sum_{j=1}^{n-1} \lambda w^j (w^j)^* = WW^*$.
\end{proof}
\begin{remark}
  The existence of~$W$ as in~\cref{lem:special-schur-horn} is a special case of the Schur--Horn theorem~\cite{hornDoublyStochasticMatrices1954}.
  It states that there exists a~$n \times n$ Hermitian matrix~$H$ with eigenvalues~$\lambda_1 \geq \dotsc \geq \lambda_n$ and diagonal entries~$d_1 \geq \dotsc \geq d_n$ if and only if~$(d_1, \dotsc, d_n)$ is majorised by~$(\lambda_1, \dotsc, \lambda_n)$, that is
  \[
    \sum_{i=1}^k d_i \leq \sum_{i=1}^k \lambda_i, \quad \sum_{i=1}^n d_i = \sum_{i=1}^n \lambda_i.
  \]
  In our setting we need an~$H = WW^*$ with diagonal entries~$(d_1, \dotsc, d_n)$ and eigenvalues $(\lambda, \dotsc, \lambda, 0)$.
  Here, a solution can also be explicitly constructed.
  It is not known how to do this (efficiently) in general.
\end{remark}

\begin{lemma}%
  \label{claim:Wn-not-empty}
  The set $\mathcal{W}_n$ is not empty.
\end{lemma}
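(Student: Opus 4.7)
The plan is to apply \cref{lem:special-schur-horn} with an appropriately chosen scalar $\lambda$ and diagonal data $d \in \R^n$, so that the matrix $W$ produced by that lemma lands exactly in $\mathcal{W}_n$. Comparing the two sets of conditions, the natural choice is
\[
\lambda \coloneqq \frac{1 - (q_3)_n}{n-1}, \qquad d_j \coloneqq (q_2)_j - b_j \text{ for } j \in [n].
\]
With these choices, the vector $w$ produced by \cref{lem:special-schur-horn} satisfies $w_j = \sqrt{\lambda - d_j} = \sqrt{\frac{1-(q_3)_n}{n-1} - (q_2)_j + b_j}$, which matches exactly the vector $w$ used to define $\mathcal{W}_n$. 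Thus the output $W$ of that lemma satisfies $W^*W = \lambda I_{n-1}$ and $WW^* = \lambda I_n - w w^*$, i.e.\ conditions (1) and (2) defining $\mathcal{W}_n$.

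It therefore only remains to verify the hypotheses of \cref{lem:special-schur-horn}, namely that $0 \leq d_j \leq \lambda$ for every $j$ and that $\sum_{j=1}^n d_j = (n-1)\lambda$. The bounds $0 \leq d_j < \lambda$ are precisely the content of \cref{lem:a properties}\ref{item:w-positive}. For the sum, I would compute
\[
\sum_{j=1}^n d_j = \sum_{j=1}^n (q_2)_j - \sum_{j=1}^n b_j = 1 - (q_3)_n = (n-1)\lambda,
\]
using $\sum_j (q_2)_j = 1$ from \cref{lem:q-very-basic-properties}\ref{item:q-very-basic-weyl-chamber} and $\sum_{j=1}^{n-1} b_j = (q_3)_n$ together with $b_n = 0$ from \cref{lem:a properties}\ref{item:bj-nonnegative},\ref{item:b-normalization}.

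Both hypotheses of \cref{lem:special-schur-horn} are met, so a matrix $W \in \C^{n \times (n-1)}$ satisfying (1) and (2) exists, showing $\mathcal{W}_n \neq \emptyset$. There is no serious obstacle here: all the arithmetic needed to match the parameters has already been done in \cref{lem:a properties}, and the existence of a Hermitian matrix with prescribed diagonal and spectrum $(\lambda,\dotsc,\lambda,0)$ is the essentially explicit construction given in \cref{lem:special-schur-horn}. The only mild subtlety is remembering the convention $b_n = 0$ when computing $\sum_{j=1}^n b_j$, so that the total comes out to $(q_3)_n$ and not something else.
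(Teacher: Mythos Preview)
Your proof is correct and follows exactly the same approach as the paper: apply \cref{lem:special-schur-horn} with $\lambda = \frac{1-(q_3)_n}{n-1}$ and $d_j = (q_2)_j - b_j$, then verify its hypotheses via \cref{lem:a properties}. Your write-up is in fact more explicit than the paper's, which simply cites \cref{lem:a properties}\ref{item:w-positive} and \cref{lem:special-schur-horn} without spelling out the verification of the sum condition $\sum_j d_j = (n-1)\lambda$.
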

\begin{proof}
  This follows from~\cref{lem:a properties}\ref{item:w-positive} combined with~\cref{lem:special-schur-horn}, setting~$d_j = (q_2)_j - b_j$ and~$\lambda = \frac{1-(q_3)_n}{n-1}$.
\end{proof}

\subsubsection*{Achieving the minimum-norm point}
We now show that the assumption that~$W \in \mathcal{W}_n$, as well as the specific choice of the entries~$a_i$ in the last columns of~$T^W$, guarantees that~$\mu(T^W) = q$. 
\begin{lemma}%
  \label{claim:TW-mu-is-q}
  For every $W \in \mathcal{W}_n$, we have $\mu(T^W) = q$, and~$q$ is the minimum-norm point of~$\Delta(T^W)$.
\end{lemma}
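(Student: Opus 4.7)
The plan is to verify directly that $\mu(T^W) = q$ by computing each of the three components, and then to apply \cref{cor:T-with-supp-Gamma-and-mu-q-is-minnorm} to obtain the minimum-norm statement. The tensor $T^W$ consists of two blocks with disjoint supports: the ``$W$-block'' on $\{(n+1-i,i,k) : i \in [n],\, k \in [n-1]\}$ with entries $W_{i,k}$, and the ``$a$-block'' on $\{(n-i,i,n) : i \in [n-1]\}$ with entries $a_i = \sqrt{b_i}$. As a warm-up one checks $\norm{T^W}^2 = \tr(W^*W) + \sum_{i=1}^{n-1} b_i = (1 - (q_3)_n) + (q_3)_n = 1$, using condition~(1) on $W$ and \cref{lem:a properties}\ref{item:b-normalization}.

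For each component of the moment map I would directly expand the formula from \cref{def:moment-map-tensors}. Cross terms between the two blocks always vanish: the $W$-block has $i+j = n+1$ while the $a$-block has $i+j = n$, and moreover the two blocks use different values of $k$, so no inner product between slices of different blocks ever contributes. For the third component, the $W$-block contributes $(W^*W)_{k,p} = \tfrac{1-(q_3)_n}{n-1} \delta_{k,p}$ by condition~(1), and the $a$-block contributes $(q_3)_n$ at position $(n,n)$ by \cref{lem:a properties}\ref{item:b-normalization}; this matches $\diag(q_3)$, since the first $n-1$ entries of $q_3$ are all equal. For the second component, the diagonal entries are $(WW^*)_{j,j} + b_j$ (with $b_n = 0$), and condition~(2) together with the definition of $w_j$ gives $(WW^*)_{j,j} = (q_2)_j - b_j$, so $\mu_2(T^W) = \diag(q_2)$. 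For the first component, the diagonal entries work out to $(WW^*)_{n+1-i,n+1-i} + b_{n-i}$ for $i < n$, and to $(WW^*)_{1,1}$ for $i = n$; the identity $(q_2)_j - b_j = (q_1)_{n+1-j} - b_{j-1}$ from \cref{lem:a properties}\ref{item:bj-gaps}, applied with the convention $b_0 = 0$ for the boundary case $i=n$, is exactly what is needed to collapse these expressions to $(q_1)_i$.

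Combining the three calculations yields $\mu(T^W) = q$. Since $\supp(T^W) \subseteq \Gamma_n$ by construction (both blocks lie in $\Gamma_n$), \cref{cor:T-with-supp-Gamma-and-mu-q-is-minnorm} immediately gives that $q$ is the minimum-norm point of $\Delta(T^W)$. The main ``obstacle'' is really just index bookkeeping: one has to keep careful track of the reversal $j \leftrightarrow n+1-j$ between the first and second tensor factor, and check that the $b_j$ contributions from the $a$-block slot correctly into both $\mu_1$ and $\mu_2$---a role played precisely by \cref{lem:a properties}\ref{item:bj-gaps}, whose formulation is essentially dictated by this compatibility requirement.
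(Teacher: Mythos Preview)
Your proposal is correct and follows essentially the same approach as the paper's proof: compute $\norm{T^W}^2=1$, verify each component of $\mu(T^W)$ directly from the structure of the $W$- and $a$-blocks using the defining conditions on $\mathcal W_n$ and \cref{lem:a properties} (in particular \ref{item:b-normalization} for $\mu_3$ and \ref{item:bj-gaps} for the $\mu_1$/$\mu_2$ index shift), and then invoke \cref{cor:T-with-supp-Gamma-and-mu-q-is-minnorm}. The only point you leave implicit is that the off-diagonal entries of $\mu_1$ and $\mu_2$ vanish; this follows immediately from the support structure (for fixed $(i,k)$ there is at most one $j$ with $(i,j,k)\in\Gamma_n$, and symmetrically), exactly as the paper notes in one line.
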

\begin{proof}
  Observe first that~$\norm{T^W} = \norm{W}^2 + \sum_{i=1}^{n-1} \abs{a_i}^2 = 1-(q_3)_n + (q_3)_n = 1$, where the second inequality follows from~$\abs{a_i}^2 = b_i$ and~\cref{lem:a properties}\ref{item:b-normalization}.
  The support of~$T^W$ is contained in~$\Gamma_n$. 
  The definition of the moment map (\cref{eq:tensor-moment-map}) implies that
  \[
    \mu_3(T^W) = W^*W \oplus [(q_3)_n] = \diag(q_3).
  \]
  It remains to check that~$\mu_1(T^W) = \diag(q_1)$ and~$\mu_2(T^W) = \diag(q_2)$.
  The support structure and~\cref{eq:tensor-moment-map} already imply that~$\mu_1(T^W)$ and~$\mu_2(T^W)$ are diagonal matrices, so it remains to check that the diagonal entries are correct.
  To obtain~$\mu_2(T^W) = \diag(q_2)$, recall that its~$j$-th diagonal entry is the squared~$2$-norm of the~$j$-th row of~$T^W$, so
  \begin{align*}
    \mu_2(T^W)_{j,j} = \sum_{i,k=1}^n \abs{T^W_{i,j,k}}^2 & = \abs{a_j}^2 + \sum_{k=1}^{n-1} \abs{W_{j,k}}^2 \\
                                                          & = \abs{a_j}^2 + (WW^*)_{j,j} = (q_2)_j,
  \end{align*}
  by definition of~$WW^*$ and~$\abs{a_j}^2 = b_j$.

  Next, the~$i$-th diagonal entry of~$\mu_1(T^W)$ is the squared $2$-norm of the~$i$-th slice of~$T^W$, hence given by
  \begin{align*}
    \sum_{j,k=1}^n \abs{T^W_{i,j,k}}^2 & = \abs{a_{n-i}}^2 + \sum_{k=1}^{n-1} \abs{W_{(n+1-i),k}}^2 \\
                                       & = \abs{a_{n-i}}^2 + (WW^*)_{(n+1-i),(n+1-i)} \\
                                       & = (q_1)_i.
  \end{align*}
  where we note that we have set~$a_0 = 0$.
  For the last equality we used that
  \[
    (WW^*)_{(n+1-i),(n+1-i)} = (q_2)_{n+1-i} - \abs{a_{n+1-i}}^2 = (q_1)_i - \abs{a_{n-i}}^2
  \]
  by~\cref{lem:a properties}\ref{item:bj-gaps} and~$\abs{a_j}^2 = b_j$.

  Finally, since~$T^W$ has support contained in~$\Gamma_n$ and~$\mu(T^W) = q$, \cref{cor:T-with-supp-Gamma-and-mu-q-is-minnorm} implies that~$q$ is the minimum-norm point of~$\Delta(T^W)$.
\end{proof}

\subsubsection*{Non-freeness of the tensor}
We now prove that~$T^W$ is not free.

\begin{lemma}
  \label{claim:TW-not-free}
  For every $W \in \mathcal{W}_n$, the tensor $T^W$ is not free.
\end{lemma}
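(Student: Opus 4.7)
The plan is to apply \cref{thm:free-tensor-has-free-tensor-in-minimal-K-orbit} contrapositively. By \cref{claim:TW-mu-is-q}, the point $\mu(T^W) = q$ is already the minimum-norm point of $\Delta(T^W)$, so if $T^W$ were free there would exist some $k = (k_1, k_2, k_3) \in \K$ such that $k \cdot T^W$ has free support; I will show this is impossible.

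First I would analyze the constraints on such a $k$. Combining \cref{lemma:tensor moment map equivariance} and \cref{lem:free-implies-diagonal-moment-map}, each matrix $k_i q_i k_i^*$ must be diagonal. Since $h_1 = h_2$ is strictly decreasing, $q_1$ and $q_2$ have pairwise distinct entries, so $k_1$ and $k_2$ are each a permutation matrix composed with a diagonal unitary. On the other hand, $q_3$ has eigenvalue pattern $((q_3)_1, \dotsc, (q_3)_1, (q_3)_n)$ with the first eigenvalue of multiplicity $n-1$ distinct from $(q_3)_n$, so $k_3$ must be a permutation composed with a block-diagonal matrix of the form $\bigl(\begin{smallmatrix} U & 0 \\ 0 & c \end{smallmatrix}\bigr)$ for some $U \in \U_{n-1}$ and $|c|=1$.

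Next I would exploit that local permutations preserve the free-support property, and that diagonal unitaries on any factor leave the support invariant. After stripping these harmless factors from $k$, one is left with the action of the above block-diagonal matrix on the third factor of $T^W$. A direct computation shows that the resulting tensor equals $T^{Wu}$ (up to a global scalar $c$ on the last slice, which does not affect support) for some $u \in \U_{n-1}$. The set $\mathcal{W}_n$ is manifestly closed under right multiplication by $\U_{n-1}$, as both defining conditions $W^*W = \lambda I_{n-1}$ and $WW^* = \lambda I_n - ww^*$ are preserved by such multiplication. Hence I may assume without loss of generality that $T^W$ itself has free support.

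The final step is to contradict this assumption. Since $\supp(T^W) \subseteq \Gamma_n$, an inspection of $\Gamma_n$ shows $T^W$ has free support if and only if every row of $W$ has at most one non-zero entry. Let $\pi(i)$ denote the column of this entry (if it exists). Under this sparsity condition, the $(i,j)$-entry of $WW^*$ for $i \neq j$ vanishes whenever rows $i$ and $j$ are not supported at the same column. Combined with $WW^* = \lambda I_n - ww^*$, this forces $w_i \overline{w_j} = 0$ for every $i \neq j$ with $\pi(i) \neq \pi(j)$ (or with $\pi$ undefined on either index). The main obstacle, and the heart of why the construction works, is establishing that every $w_i$ is strictly positive; this follows directly from the strict inequality $(q_2)_j - b_j < \lambda$ proved in \cref{lem:a properties}\ref{item:w-positive}. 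With all $w_i > 0$, $\pi$ must be a constant function on $[n]$, so $W$ has at most one non-zero column. This contradicts $W^*W = \lambda I_{n-1}$ with $\lambda = (1-(q_3)_n)/(n-1) > 0$.
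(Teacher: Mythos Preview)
Your argument is correct and follows essentially the same route as the paper: invoke \cref{thm:free-tensor-has-free-tensor-in-minimal-K-orbit} at the point $T^W$ itself (via \cref{claim:TW-mu-is-q}), use equivariance and \cref{lem:free-implies-diagonal-moment-map} to reduce $k$ to a unitary $u \in \U_{n-1}$ acting on the third factor, absorb $u$ into $W$ using the $\U_{n-1}$-invariance of $\mathcal W_n$, and then derive a contradiction from the fact that all off-diagonal entries of $WW^*=\lambda I_n-ww^*$ are nonzero because $w$ is entrywise positive (\cref{lem:a properties}\ref{item:w-positive}). The paper phrases the final contradiction by picking two columns of $W$ and looking at a single off-diagonal entry of $WW^*$, whereas you argue that $\pi$ must be globally constant; these are the same observation.
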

\begin{proof}
  Suppose~$T^W$ is free.
  As~$\mu(T^W) = q$ is the minimum-norm point of~$\Delta(T^W)$ by~\cref{claim:TW-mu-is-q},
  \cref{thm:free-tensor-has-free-tensor-in-minimal-K-orbit} implies that there exists~$k \in \U_n \times \U_n \times \U_n$ such that~$k \cdot T^W$ has free support.
  Therefore every component of~$\mu(k \cdot T^W)$ is diagonal by~\cref{lem:free-implies-diagonal-moment-map}.

  We now further simplify~$k$.
  As~$\mu$ is equivariant with respect to the unitary group action, the components of~$\mu(k \cdot T^W)$ and~$\mu(T^W)$ are isospectral; but both are tuples of diagonal matrices, hence the diagonal matrices are related by permutations.
  By additionally acting with some~$\sigma \in (S_n)^3 \subseteq \U_n^3$, we may assume~$\sigma k \cdot \mu(T^W) = \mu(\sigma k \cdot T^W) = \mu(T^W)$.
  We note here that local permutations do not affect freeness.
  In other words, without loss of generality we may assume that~$k$ is in the stabilizer of~$\mu(T^W)$.
  Now we use that~$\mu(T^W) = q$: both~$q_1$ and~$q_2$ are strictly decreasing, and~$q_3$ is of the form~$(\frac{1-(q_3)_n}{n-1}, \dotsc, \frac{1-(q_3)_n}{n-1}, (q_3)_n)$ with~$(q_3)_n < \frac1n$.
  This implies that the stabilizer is given by the subgroup~$\U_1^n \times \U_1^n \times (\U_{n-1} \times \U_1) \subseteq \U_n^3$.
  By further acting with an element of~$\U_1^n \times \U_1^n \times (\{I_{n-1}\} \times \U_1)$, we may assume~$k$ only acts with some unitary~$u \in \U_{n-1}$ on the first~$n-1$~columns of~$T^W$; this again does not affect the freeness of the support of~$T^W$.
  Then we have
  \[
    k \cdot T^W = T^{W u^\top}
  \]
  where~$W u^\top$ is the usual matrix product of~$W$ with the transpose of~$u$.
  Next, note that $W u^\top \in \mathcal{W}_n$ as~$\bar{u} W^* W u^\top = \frac{1 - (q_3)_n}{n-1} \bar{u} u^\top = \frac{1 - (q_3)_n}{n-1} I_{n-1} = W^* W$ and~$W u^\top \bar{u} W^* = W W^*$. Here~$\bar{u}$ denotes the entrywise complex conjugate of~$u$.
  Therefore we may as well assume that we initially chose~$W$ such that~$T^W$ has free support.

  The matrix $W$ contains at least one non-zero element in every column, as~$W^* W$ has non-zero diagonal entries.
  Since~$W$ has~$n-1 \geq 2$ columns, there exist~$i, i'$ such that~$W_{i,1} \neq 0$ and~$W_{i',2} \neq 0$.
  In order for~$T^W$ to have free support, it must be the case that every row of~$W$ contains at most~one non-zero element.
  Therefore~$i \neq i'$, and the~$i$-th and~$i'$-th rows of~$W$ have zero inner product.
  But every off-diagonal entry of~$WW^* = \frac{1 - (q_3)_n}{n-1} I_n - ww^*$ is non-zero by virtue of~$w$ being entrywise positive (\cref{lem:a properties}\ref{item:w-positive}); in particular, the inner product between the~$i$-th and~$i'$-th row cannot be zero.
  This is a contradiction, and we conclude that~$T^W$ is not free.
\end{proof}

We are now in a position to prove \cref{thm:explicit-concise-nonfree-tensor}.
\begin{proof}[Proof of~\cref{thm:explicit-concise-nonfree-tensor}.]
  There exists~$W \in \mathcal W_n$ by~\cref{claim:Wn-not-empty}.
  The tensor~$T^W$ is not free by~\cref{claim:TW-not-free}.
  The conciseness of~$T^W$ follows from~\cref{claim:TW-mu-is-q}, as~$q$ has no zero entries; combined with the definition of~$\mu$ (\cref{def:moment-map-tensors}) this implies that $T^W$ has full-rank flattenings, that is, $T^W$ is concise.
  This completes the proof.
\end{proof}

\subsection{A non-free 0/1 tensor}
\label{subsection:non-free-01-tensor}
In this subsection we finish the proof of~\cref{thm:explicit non-free in balanced format} by showing that the tensors~$T^W$ from~\cref{thm:explicit-concise-nonfree-tensor} is~$\GL$-equivalent to the~$0/1$-tensor described in~\cref{thm:explicit non-free in balanced format}, and that this also holds for generic tensors with the right support. 

Let~$V_n = \{ S \in \C^n \otimes \C^n \otimes \C^n \mid \supp(S) \subseteq \Gamma_n \}$, where~$\Gamma_n =\{(i,j,k)\in [n]^3 \mid k < n \text{ and } i+j=n+1, \text{ or } k = n \text{ and } i+j = n\}$ as before.
Define~$\mathrm{W}\colon V_n \to \C^{n \times (n-1)}$ by
\[
\mathrm{W}_{i,k}(S) = S_{(n+1-i),i,k}, \quad i \in [n], \, k \in [n-1],
\]
and~$\mathrm{a}\colon V_n \to \C^{n-1}$ by~$\mathrm{a}_{i}(S) = S_{(n-i),i,n}$ for~$i \in [n-1]$.
The notion is chosen in analogy with that for the construction of~$T^W$.
Let~$S_0 \in V_n$ be the (unique) tensor with~$\mathrm{a}_i(S_0) = 1$ for~$i \in [n-1]$, and
\[
    \mathrm{W}(S_0) = \begin{bmatrix} e_1^\top \\ \vdots \\ e_{n-1}^\top \\ e_1^\top + \dotsb + e_{n-1}^\top\end{bmatrix} = 
    \left[\!
    \begin{array}{c}
    I_{n-1} \\
    \hline
    1 \, \cdots \, 1 
    \end{array}\!
    \right].
\]
For example, for~$n=4$,
\[
    S_0 = \left[
    \begin{array}{cccc|cccc|cccc|cccc}
    \grayzero & \grayzero & \grayzero & \grayzero & \grayzero & \grayzero & \grayzero & \grayzero & \grayzero & \grayzero & \grayzero & 1 & 1 & \grayzero & \grayzero & \grayzero \\
    \grayzero & \grayzero & \grayzero & \grayzero & \grayzero & \grayzero & \grayzero & 1 & \grayzero & 1 & \grayzero & \grayzero & \grayzero & \grayzero & \grayzero & \grayzero \\
    \grayzero & \grayzero & \grayzero & 1 & \grayzero & \grayzero & 1 & \grayzero & \grayzero & \grayzero & \grayzero & \grayzero & \grayzero & \grayzero & \grayzero & \grayzero \\
    1 & 1 & 1 & \grayzero & \grayzero & \grayzero & \grayzero & \grayzero & \grayzero & \grayzero & \grayzero & \grayzero & \grayzero & \grayzero & \grayzero & \grayzero
    \end{array}
    \right].
\]
Note that~$S_0$ exactly has the support as described in~\cref{thm:explicit non-free in balanced format}.
\begin{lemma}
\label{lem:generic-Vn-tensor-is-equivalent-to-S0}
Let~$S \in V_n$ be such that~$\mathrm{a}_i(S) \neq 0$ for~$i\in [n-1]$, and any~$n-1$ rows of~$\mathrm{W}(S)$ are linearly independent.
Then~$S$ is~$\G$-equivalent to~$S_0$.
\end{lemma}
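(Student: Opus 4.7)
The plan is to exhibit an explicit triple $(g_1, g_2, g_3) \in \G$ carrying $S$ to $S_0$, restricting to the subgroup where $g_1 = \diag(\alpha)$ and $g_2 = \diag(\beta)$ are diagonal and $g_3 = \diag(A, d)$ is block diagonal with $A \in \GL_{n-1}$ and $d \in \C^*$. This subgroup preserves $V_n$ setwise, and unpacking the definitions of $\mathrm{W}$ and $\mathrm{a}$ shows that under its action
\[
  \mathrm{W}((g_1,g_2,g_3) \cdot S) = D_1\, \mathrm{W}(S)\, A^T, \qquad \mathrm{a}((g_1,g_2,g_3) \cdot S) = D_2\, \mathrm{a}(S),
\]
where $D_1 = \diag(\alpha_n \beta_1, \alpha_{n-1} \beta_2, \dotsc, \alpha_1 \beta_n)$ and $D_2 = \diag(d \alpha_{n-1} \beta_1, \dotsc, d \alpha_1 \beta_{n-1})$.

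First I would produce $A \in \GL_{n-1}$ and a diagonal $D_1$ with nonzero entries satisfying $D_1 \mathrm{W}(S) A^T = \mathrm{W}(S_0)$. Transposing and writing $\mathrm{W}(S)^T = (C \mid v)$ with $C \in \C^{(n-1) \times (n-1)}$ formed by the first $n-1$ columns and $v \in \C^{n-1}$ the last, this becomes $A (C \mid v) D_1 = (I_{n-1} \mid \mathbf{1})$. The natural choice $D_1 := \diag(\diag(C^{-1} v), 1)$ and $A := \diag(C^{-1} v)^{-1} C^{-1}$ solves the equation, provided $C$ is invertible and every $(C^{-1} v)_i$ is nonzero. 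Both conditions follow from the genericity hypothesis: $C$ is the transpose of the first $n-1$ rows of $\mathrm{W}(S)$, which are linearly independent, and $(C^{-1} v)_i = 0$ would express $v$ as a linear combination of the columns of $C$ other than the $i$-th, which is exactly the statement that the $n-1$ rows of $\mathrm{W}(S)$ obtained by omitting the $i$-th row are linearly dependent.

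Next I would realize the prescribed $D_1$ and the target $D_2 := \diag(1/\mathrm{a}_1(S), \dotsc, 1/\mathrm{a}_{n-1}(S))$ (which is well-defined because $\mathrm{a}_i(S) \neq 0$) by some $(\alpha, \beta, d) \in (\C^*)^{2n+1}$. The equations $\alpha_{n+1-i} \beta_i = (D_1)_{i,i}$ for $i \in [n]$ and $d \alpha_{n-i} \beta_i = (D_2)_{i,i}$ for $i \in [n-1]$ form a triangular multiplicative system: after free choices of $\alpha_n$ and $d$ in $\C^*$, the remaining unknowns $\beta_1, \alpha_{n-1}, \beta_2, \alpha_{n-2}, \dotsc$ are successively determined, and are all nonzero since each right-hand side is. Combined with the first step, this produces a $g \in \G$ with $g \cdot S = S_0$.

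The main point that requires any thought is the translation of the row-independence hypothesis into the nonvanishing of every $(C^{-1} v)_i$; everything else is explicit back-substitution and a parameter count.
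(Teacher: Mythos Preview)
Your proof is correct and follows essentially the same approach as the paper: both work in the subgroup with diagonal $g_1,g_2$ and block-diagonal $g_3$, and the key step---translating the hypothesis that any $n-1$ rows of $\mathrm{W}(S)$ are independent into the nonvanishing of every $(C^{-1}v)_i$---is exactly the paper's argument that each $\lambda_i \neq 0$ (indeed $(C^{-1}v)_i = \lambda_i$ in the paper's notation). The only difference is packaging: the paper proceeds by a sequence of normalizations (first fix $\mathrm{a}$, then column-reduce, then cascade-rescale), whereas you set up and solve the whole system at once.
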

\begin{proof}
By rescaling the rows of~$S$ we may assume that~$\mathrm{a}_i(S) = 1$ for all~$i \in [n-1]$.
Next, the assumption that any~$n-1$ rows of~$\mathrm{W}(S)$ are linearly independent implies that there exists an invertible~$(n-1) \times (n-1)$ matrix~$M$ such that~$\mathrm{W}(S) M$ has rows~$e_1^\top$, \dots, $e_{n-1}^\top$, and~$\lambda_1 e_1^\top + \dotsb + \lambda_{n-1} e_{n-1}^\top$ for some~$\lambda_1, \dotsc, \lambda_{n-1} \in \C$.
The~$\lambda_i$ are all non-zero: if~$\lambda_i$ is zero, then~$\mathrm{W}(S)_1 M, \dotsc, \widehat{\mathrm{W}(S)_i M}, \dotsc, \mathrm{W}(S)_{n} M$ are linearly dependent, where~$\widehat{\cdot}$ indicates omission, and~$\mathrm{W}(S)_j$ refers to the~$j$-th row of~$\mathrm{W}(S)$.
Since~$M$ is invertible, $\mathrm{W}(S)_1, \dotsc, \widehat{\mathrm{W}(S)_i}, \dotsc, \mathrm{W}(S)_n$ would then be linearly dependent, which is a contradiction.

Now let~$S' = (I \otimes (M^\top \oplus [1]) \otimes I) S$; then~$S' \in V_n$, $\mathrm{a}_i(S') = \mathrm{a}_i(S) = 1$ for~$i \in [n-1]$, and~$\mathrm{W}(S') = \mathrm{W}(S) M$.
Let~$S''$ be obtained from~$S'$ by multiplying the~$j$-th column with~$\lambda_j^{-1}$ for each~$j \in [n-1]$.
Then~$\mathrm{W}(S'')$ has~$e_1^\top + \dotsb + e_{n-1}^\top$ as last row, and its~$j$-th row is~$\lambda_j^{-1} e_j$ for~$j \in [n-1]$.
The~$(n-i,i,n)$-entries of~$S''$ are still~$1$ for~$i \in [n-1]$.

Concretely, for~$n=4$, the situation is as follows:
\[
S'' = \left[
    \begin{array}{cccc|cccc|cccc|cccc}
    \grayzero & \grayzero & \grayzero & \grayzero & \grayzero & \grayzero & \grayzero & \grayzero & \grayzero & \grayzero & \grayzero & 1 & \lambda_1^{-1} & \grayzero & \grayzero & \grayzero \\
    \grayzero & \grayzero & \grayzero & \grayzero & \grayzero & \grayzero & \grayzero & 1 & \grayzero & \lambda_2^{-1} & \grayzero & \grayzero & \grayzero & \grayzero & \grayzero & \grayzero \\
    \grayzero & \grayzero & \grayzero & 1 & \grayzero & \grayzero & \lambda_3^{-1} & \grayzero & \grayzero & \grayzero & \grayzero & \grayzero & \grayzero & \grayzero & \grayzero & \grayzero \\
    1 & 1 & 1 & \grayzero & \grayzero & \grayzero & \grayzero & \grayzero & \grayzero & \grayzero & \grayzero & \grayzero & \grayzero & \grayzero & \grayzero & \grayzero
    \end{array}
    \right].
\]
We now alternate between rescaling slices~$2, \dotsc, n$ and rows~$n-2, \dotsc, 1$, with the scalars chosen such that the non-zero entries all become~$1$.
In the above example, we would rescale slices~$2, 3, 4$ by~$\lambda_3$, $\lambda_3 \lambda_2$, $\lambda_3 \lambda_2 \lambda_1$, and rows~$2,1$ by $\lambda_3^{-1}$, $\lambda_3 \lambda_2$ respectively.
This shows that~$S''$ is equivalent to~$S_0$, completing the proof.
\end{proof}

We now show that the tensor~$T^W$ constructed in~\cref{subsection:proof-main-theorem} satisfies this property:
\begin{lemma}
\label{lem:TW-is-equivalent-to-S0}
For any~$W \in \mathcal W_n$, the tensor~$T^W$ is such that~$\mathrm{a}_i(T^W) \neq 0$ and any~$n-1$ rows of~$\mathrm{W}(T^W)$ are linearly independent.
\end{lemma}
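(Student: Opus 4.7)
The first claim is immediate: by construction $\mathrm{a}_i(T^W) = T^W_{n-i,i,n} = a_i = \sqrt{b_i}$, and $b_i > 0$ for $i \in [n-1]$ by~\cref{lem:a properties}\ref{item:bj-nonnegative}. For the second claim, observe that $\mathrm{W}_{i,k}(T^W) = T^W_{n+1-i,i,k} = W_{i,k}$, so $\mathrm{W}(T^W) = W$, and the task reduces to showing that any $n-1$ rows of $W \in \C^{n \times (n-1)}$ are linearly independent.

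The plan is to identify $\ker(W^*) \subseteq \C^n$ explicitly. Since $W^* W = \lambda I_{n-1}$ with $\lambda = (1-(q_3)_n)/(n-1) > 0$, the matrix $W$ has full column rank, so by rank-nullity $\ker(W^*)$ is one-dimensional. I will show that $w$ spans it. Indeed, from $WW^* = \lambda I_n - w w^*$ we get $WW^* w = \lambda w - \|w\|^2 w$, and $\|w\|^2 = \lambda$ (this is exactly the normalization used in the proof of~\cref{lem:special-schur-horn}, and follows here from $\|w\|^2 = n\lambda - \sum_j d_j$ with $d_j = (q_2)_j - b_j$, $\sum_j (q_2)_j = 1$, and $\sum_j b_j = (q_3)_n$ by~\cref{lem:a properties}\ref{item:b-normalization}). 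Hence $WW^* w = 0$, which gives $\|W^*w\|^2 = w^* WW^* w = 0$, so $w \in \ker(W^*)$, and therefore $\ker(W^*) = \mathrm{span}(w)$.

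Next, I would translate this into a statement about row-dependencies. Writing $W_j$ for the $j$-th row of $W$, a dependency $\sum_{j=1}^n c_j W_j = 0$ with $c \in \C^n$ is the same as $c^{\top} W = 0$, i.e., $\bar c \in \ker(W^*) = \mathrm{span}(w)$. Since $w$ is real, the only dependencies among rows are scalar multiples of $w$. Consequently, removing the $i$-th row of $W$ leaves $n-1$ linearly independent rows if and only if $w_i \neq 0$. By~\cref{lem:a properties}\ref{item:w-positive} we have $(q_2)_j - b_j < \lambda$ for every $j \in [n]$, so $w_j = \sqrt{\lambda - ((q_2)_j - b_j)} > 0$ for every $j$. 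The claim follows.

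There is no real obstacle here; the only small computation is the identity $\|w\|^2 = \lambda$, which is available from the setup of~\cref{lem:special-schur-horn} and from~\cref{lem:a properties}\ref{item:b-normalization}. The essential point is simply that the unique (up to scaling) left null vector of $W$ is entrywise non-vanishing.
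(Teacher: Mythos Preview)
Your proof is correct. Both your argument and the paper's rely on the same two ingredients, namely the formula $WW^* = \lambda I_n - ww^*$ and the strict positivity of the entries of $w$ from \cref{lem:a properties}\ref{item:w-positive}, but they assemble them differently. The paper removes the $i$-th row to form $W^i$, observes that $W^i(W^i)^* = \lambda I_{n-1} - v^i(v^i)^*$ with $v^i$ obtained from $w$ by deleting the $i$-th entry, and then checks invertibility via $\norm{v^i}^2 \neq \lambda$; this last inequality is verified by computing $\norm{v^i}^2 = (q_2)_i - b_i$ and invoking \cref{lem:a properties}\ref{item:w-positive}. Your route is more direct: you identify the one-dimensional left null space of $W$ as $\mathrm{span}(w)$ (using $\norm{w}^2 = \lambda$), so that the unique row-dependency has all non-zero coefficients and deleting any single row leaves the rest independent. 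Your argument avoids the per-$i$ submatrix computation and makes the role of $w$ as the unique dependency vector explicit; the paper's version is slightly more hands-on but reaches the same conclusion through the equivalent condition $\norm{v^i}^2 = \norm{w}^2 - w_i^2 < \lambda$.
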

\begin{proof}
We have~$\mathrm{W}(T^W) = W$, and~$\mathrm{a}_i(T^W) = a_i$.
The~$a_i$ are such that~$\abs{a_i}^2 = b_i > 0$ for~$i \in [n-1]$ (\cref{lem:a properties}\ref{item:bj-nonnegative}).
Next, $W \in \mathcal W_n$ implies that
\[
    WW^* = \frac{1 - (q_3)_n}{n-1} I_n - ww^*
\]
with~$w_j = \sqrt{\frac{1 - (q_3)_n}{n-1} - (q_2)_j + b_j}$.
Linear independence of the rows of the matrix~$W^i$ obtained from~$W$ by excluding the~$i$-th row is equivalent to its invertibility, which is in turn equivalent to invertibility of~$W^i (W^i)^*$, which is a principal submatrix of~$WW^*$.
More precisely, let~$v^i$ be obtained from~$w$ by excluding the~$i$-th entry; then
\[
    W^i (W^i)^* = \frac{1 - (q_3)_n}{n-1} I_{n-1} - v^i (v^i)^*.
\]
To check that this is invertible, it suffices to check that~$\norm{v^i}^2 \neq \frac{1 - (q_3)_n}{n-1}$.
To see that this is the case, note that
\begin{align*} 
    \norm{v^i}^2 & = \sum_{j=1, j \neq i}^n \abs{w_j}^2 \\
    & = \sum_{j=1, j \neq i}^n \left(\frac{1-(q_3)_n}{n-1} - (q_2)_j + b_j\right) \\
    & = 1 - (q_3)_n - \sum_{j=1, j \neq i}^n \big((q_2)_j - b_j\big).
\end{align*}
As~$\sum_{j=1}^n (q_2)_j = 1$ and~$\sum_{j=1}^{n} b_j = (q_3)_n$ (\cref{lem:a properties}\ref{item:b-normalization} noting that~$b_n = 0$),
\begin{align*}
    \norm{v^i}^2 & = (q_2)_i - b_i. 
\end{align*}
Thus it remains to check that~$(q_2)_i - b_i \neq \frac{1-(q_3)_n}{n-1}$.
But this follows from~\cref{lem:a properties}\ref{item:w-positive}.
\end{proof}

We now put everything together, proving the main theorem of this paper:
\begin{proof}[Proof of~\cref{thm:explicit non-free in balanced format}]
The tensors~$T^W$ with~$W \in \mathcal W_n$ constructed in~\cref{subsection:proof-main-theorem} are non-free (\cref{thm:explicit-concise-nonfree-tensor}).
They are equivalent to~$S_0$ by~\cref{lem:generic-Vn-tensor-is-equivalent-to-S0} and~\cref{lem:TW-is-equivalent-to-S0}, hence~$S_0$ is not free.
Moreover a generic tensor~$S \in V_n$ is equivalent to~$S_0$ by~\cref{lem:generic-Vn-tensor-is-equivalent-to-S0}, since its assumptions are generically satisfied.
\end{proof}

\section*{Acknowledgements}
MvdB, VL, and MW acknowledge support by the European Research Council (ERC Grant Agreement No.~101040907).
MvdB also acknowledges financial support by the Dutch National Growth Fund (NGF), as part of the Quantum Delta NL visitor programme.
MC and HN acknowledge financial support from the European Research Council (ERC Grant Agreement No.~818761), VILLUM FONDEN via the QMATH Centre of Excellence (Grant No.~10059) and the Novo Nordisk Foundation (grant NNF20OC0059939 `Quantum for Life'). MC also thanks the National Center for Competence in Research SwissMAP of the Swiss National Science Foundation and the Section of Mathematics at the University of Geneva for their hospitality. Part of this work was completed while MC was Turing Chair for Quantum Software, associated to the QuSoft research center in Amsterdam, acknowledging financial support by the Dutch National Growth Fund (NGF), as part of the Quantum Delta NL visitor programme.
HN also acknowledges support by the European Union via a ERC grant (QInteract, Grant No.~101078107).
MW also acknowledges the Deutsche Forschungsgemeinschaft (DFG, German Research Foundation) under Germany's Excellence Strategy - EXC\ 2092\ CASA - 390781972, the BMBF (QuBRA, 13N16135; QuSol, 13N17173) and the Dutch Research Council (NWO grant OCENW.KLEIN.267).
JZ was supported by NWO Veni grant VI.Veni.212.284.
Views and opinions expressed are those of the author(s) only and do not necessarily reflect those of the European Union or the European Research Council Executive Agency. Neither the European Union nor the granting authority can be held responsible for them.

\bibliographystyle{alphaurl}
\bibliography{references}

\appendix
\section{General moment polytope criterion for non-freeness}
\label{section:general-moment-polytope-criterion-freeness}

We discuss here a general version of the criterion introduced in \cref{section:moment-polytope-criterion-freeness}.
For a general overview of the terminology used here, we refer to~\cite{wallachGeometricInvariantTheory2017,georgoulasMomentWeightInequalityHilbert2021}.
Let~$G$ be a connected complex reductive group, $K$ a maximal compact subgroup, and suppose~$\pi\colon G \to \GL(V)$ is a rational representation of~$G$ on a finite-dimensional complex vector space~$V$.
We will write~$g \cdot v := \pi(g)v$ for~$g \in G$ and~$v \in V$.

Assume~$V$ is endowed with a~$K$-invariant inner product.
We denote the Lie algebra of~$K$ (resp., $G$) by~$\Lie(K)$ (resp., $\Lie(G)$), and assume that~$\Lie(K)$ is endowed with a (real) inner product invariant under the adjoint action of~$K$.
Such an inner product can be extended to a complex inner product on~$\Lie(G)$ by using the decomposition~$\Lie(G) = \Lie(K) \oplus i \Lie(K)$, which we will denote by~$\<\cdot, \cdot>$.
The space of linear maps~$\Lie(G) \to \C$ is denoted by~$\Lie(G)^*$.

The action of~$G$ on~$V$ induces a Lie algebra action of~$\Lie(G)$ on~$V$, given by
\[
    X \star v := \partial_{t=0} (\exp(tX) \cdot v)
\]
for~$v \in V$ and~$X \in \Lie(G)$. Here~$\exp\colon \Lie(G) \to G$ is the exponential map for~$G$.

We define the moment map in this general setting as follows.
For~$v \in V \setminus \{0\}$, consider the Kempf--Ness function~$F_v\colon G \to \R$ defined by~$F_v(g) = \log \norm{\pi(g) v}$.
\begin{definition}[Moment map]
  \label{def:moment-map-general}
  The moment map~$\mu\colon V \setminus \{0\} \to i \Lie(K)$
  is defined as the unique element of~$i \Lie(K)$ satisfying for all~$X \in i \Lie(K)$
  \begin{equation*}
    \<\mu(v), X> = \partial_{t=0} F_v(\exp(tX)) = \frac{\<v, X \star v>}{\norm{v}^2}.
  \end{equation*}
\end{definition}
\begin{remark}
  As the inner product on~$V$ is~$K$-invariant, the Kempf--Ness function is also invariant: $F_v(kg) = F_v(g)$ for all~$k \in K$ and~$g \in G$.
  As a consequence, $\partial_{t=0} F_v(\exp(tX))$ vanishes for~$X \in \Lie(K) \subseteq \Lie(G)$.
\end{remark}
\begin{remark}
  The moment map as defined in~\cref{def:moment-map-general} yields a moment map in the sense of symplectic geometry for the~$K$-action on~$\Proj(V)$ (endowed with the Fubini--Study form), see~\cite[Cor.~1.2.1]{nessStratificationNullCone1984}.
\end{remark}

Let~$D_K \subseteq K$ be a maximal torus%
\footnote{Though maximal tori are conventionally denoted with~$T$, this clashes with standard notation for tensors, so we opt for~$D$ (motivated by diagonal matrices in the setting of~$\GL_n$).}
and let~$D \subseteq G$ be its complexification.
Denote by $\Omega(V) \subseteq \Lie(D)^*$ the set of weights of~$V$, and write~$V = \oplus_{\omega \in \Omega(V)} V_\omega$ for its weight decomposition.
\begin{definition}[Support]
  \label{def:general-support}
  Let~$v \in V$. If~$v = \sum_{\omega \in \Omega(V)} v_\omega$ is its weight decomposition, then we define the \emph{support} of~$v$ by
  \begin{equation*}
    \supp(v) = \{\omega \in \Omega(V) \mid v_\omega \neq 0 \}.
  \end{equation*}
\end{definition}
\begin{definition}[Free support]
  \label{def:general-free-support}
  A subset~$\Omega' \subseteq \Omega(V)$ is called \emph{free} if for any two~$\omega, \omega' \in \Omega'$, $\omega - \omega'$ is not a root of~$G$.
\end{definition}
The general analog to \cref{lem:free-implies-diagonal-moment-map} is as follows.
\begin{lemma}[{\cite[Lem.~7.1]{sjamaarConvexityPropertiesMoment1998}, \cite[Prop.~2.2]{franz2002}}]
  \label{lem:general-freeness-moment-map-diagonal}
  Let~$v \in V \setminus \{0\}$.
  If~$\supp(v)$ is free, then~$\mu(v) \in i\Lie(D_K) \subseteq i\Lie(K)$.
\end{lemma}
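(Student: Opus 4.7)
The plan is to translate the condition $\mu(v) \in i\Lie(D_K)$ into an orthogonality condition on $\mu(v)$, then reduce to a straightforward weight-space calculation that uses the fact that root spaces shift weights.

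First, I would observe that $\mu(v) \in i\Lie(D_K)$ holds if and only if $\<\mu(v), Y> = 0$ for every $Y$ in the orthogonal complement of $i\Lie(D_K)$ inside $i\Lie(K)$ (with respect to the real inner product that the complex inner product on $\Lie(G)$ restricts to). By the defining relation $\<\mu(v), Y> = \<v, Y \star v>/\norm{v}^2$ in \cref{def:moment-map-general}, this is equivalent to $\<v, Y \star v> = 0$ for such $Y$. Next, I would invoke the root-space decomposition $\Lie(G) = \Lie(D) \oplus \bigoplus_\alpha \Lie(G)_\alpha$ and recall that the complexification of $\mathfrak{m} \coloneqq \Lie(D_K)^\perp \cap \Lie(K)$ is precisely $\bigoplus_\alpha \Lie(G)_\alpha$. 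Since the condition $\<v, Y \star v> = 0$ is $\C$-linear in $Y$, it is enough to check it for $Y = E_\alpha \in \Lie(G)_\alpha$ for each root $\alpha$.

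Second, I would fix a root $\alpha$ and $E_\alpha \in \Lie(G)_\alpha$, decompose $v = \sum_{\omega \in \supp(v)} v_\omega$ into weight components, and use the standard fact that the infinitesimal action of a root space shifts weights: $E_\alpha \star V_\omega \subseteq V_{\omega + \alpha}$. Combining this with the orthogonality of distinct weight spaces (a consequence of $K$-invariance of the inner product on $V$, together with compactness of $D_K$), the sesquilinear expansion collapses to
\begin{equation*}
    \<v, E_\alpha \star v> \;=\; \sum_{\omega, \omega' \in \supp(v)} \<v_{\omega'}, E_\alpha \star v_\omega> \;=\; \sum_{\omega \in \supp(v)} \<v_{\omega + \alpha}, E_\alpha \star v_\omega>,
\end{equation*}
where a summand is understood to be zero whenever $\omega + \alpha \notin \supp(v)$.

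Finally, I would invoke the freeness hypothesis. Freeness of $\supp(v)$ means precisely that $\omega' - \omega$ is not a root for any two distinct $\omega, \omega' \in \supp(v)$. Since $\alpha$ is a root it is in particular nonzero, so any nonvanishing summand above would produce two distinct elements $\omega, \omega + \alpha \in \supp(v)$ whose difference is the root $\alpha$, a contradiction. Hence every summand vanishes, so $\<v, E_\alpha \star v> = 0$, and since $\alpha$ and $E_\alpha$ were arbitrary we conclude $\mu(v) \in i\Lie(D_K)$.

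The only mildly delicate point is the bookkeeping identifying the orthogonal complement of $i\Lie(D_K)$ in $i\Lie(K)$ with the (real form of the) sum of root spaces; this is a standard feature of the compact real form and requires no new ideas. The substantive content is the one-line observation that root-space actions shift weights, which is what freeness of the support is designed to rule out.
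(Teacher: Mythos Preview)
Your proof is correct and follows essentially the same route as the paper's: both reduce to showing $\langle v, X \star v\rangle = 0$ for $X$ in a root space, expand $v$ in its weight decomposition, use that root-space actions shift weights by the root, and invoke freeness together with orthogonality of weight spaces to conclude.
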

\begin{proof}
  Recall the decomposition~$\Lie(G) = \Lie(D) \oplus \bigoplus_{\alpha} \Lie(G)_\alpha$ where the~$\alpha \in \Lie(D)^*$ are the roots of~$G$, i.e.\ the weights of $\Lie(G)$ under the conjugation action by $G$.
  Then~$\mu(v) \in \Lie(D)$ if and only if for every root~$\alpha$ and~$X \in \Lie(G)_\alpha$, one has~$\langle \mu(v), X \rangle = 0$.
  Next, it suffices to check this condition for~$X \in \Lie(G)_\alpha \cap i \Lie(K)$, as the complex span of~$\Lie(G)_\alpha \cap i \Lie(K)$ is~$\Lie(G)_\alpha$.

  Write~$v = \sum_{\omega \in \supp(v)} v_\omega$ for the weight decomposition.
  Then for~$X \in i \Lie(K)$,
  \[
    \<\mu(v),X> = \frac{1}{\norm{v}^2} \langle v, X \star v \rangle = \frac{1}{\norm{v}^2} \sum_{\omega, \omega' \in \supp(v)} \langle v_\omega, X \star v_{\omega'} \rangle.
  \]
  If~$X \in \Lie(G)_\alpha$, we find that~$X\star v_{\omega'} \in V_{\omega' + \alpha}$. Since~$\supp(v)$ is assumed to be free it holds that~$\omega' + \alpha \not\in \supp(v)$. We conclude that~$\langle v_\omega, X \star v_{\omega'} \rangle = 0$, since the weight decomposition is orthogonal.
  It follows that $\<\mu(v),X> = 0$ for $X \in \Lie(G)_\alpha$ for all roots $\alpha$, and we are done.
\end{proof}
\begin{definition}[Free vector]
  \label{def:general-freeness}
  A vector~$v \in V$ is called \emph{free} if there exists~$g \in G$ such that~$\supp(g \cdot v)$ is free.
\end{definition}

We now state the main theorem of this section.
\begin{theorem}
  \label{thm:general-free-tensor-has-free-tensor-in-minimal-K-orbit}
  Suppose~$v \in V \setminus \{0\}$ is free, and let~$w \in \overline{G \cdot v} \setminus \{0\}$ be such that~$\norm{\mu(w)} = \min_{p \in \Delta(v)} \norm{p}$.
  Then there exists~$k \in K$ such that~$\supp(k \cdot w)$ is free.
\end{theorem}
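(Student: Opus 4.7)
The proof will closely mirror the tensor-specific argument used to establish \cref{thm:free-tensor-has-free-tensor-in-minimal-K-orbit}, replacing the concrete computations with their abstract analogs; the role of $\GL$ is played by $G$, of $\K$ by $K$, of the diagonal torus $\D$ by the complex torus $D$, and the diagonal part of the moment map is replaced by its orthogonal projection onto $i\Lie(D_K)$.

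First, I would reduce to the case where $\supp(v)$ is itself free, which is harmless since this replacement leaves $\overline{G \cdot v}$, $\Delta(v)$ and minimum-norm points in the moment polytope unchanged. The next ingredients are the general versions of \cref{thm:GRS-gradflow-ODE-properties-tensor,thm:second-ness-uniqueness}, both of which are proved in \cite{georgoulasMomentWeightInequalityHilbert2021} in the general Kähler/reductive setting: the gradient flow of the Kempf--Ness function (equivalently the ODE $\tfrac{\diff}{\diff t} v(t) = -\mu(v(t)) \star v(t)$ with $v(0)=v$) yields a trajectory $v(t) \in G \cdot v$ whose projective limit $[v]_\infty$ exists and achieves the minimum norm in $\Delta(v)$, and any two such minimum-norm points in $\overline{G \cdot v}$ lie in the same $K$-orbit projectively. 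In particular, any minimum-norm $w$ as in the hypothesis satisfies $[w] \in K \cdot [v]_\infty$.

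The core step is to show that the gradient-flow trajectory $v(t)$ actually remains inside the complex torus orbit $D \cdot v$. To this end, let $\mu_D\colon V \setminus \{0\} \to i\Lie(D_K)$ be the orthogonal projection of $\mu$ onto $i\Lie(D_K) \subseteq i\Lie(K)$, and consider the auxiliary ODE $\tfrac{\diff}{\diff t} \tilde v(t) = -\mu_D(\tilde v(t)) \star \tilde v(t)$ with $\tilde v(0) = v$. Because $\mu_D$ takes values in the abelian Lie algebra $\Lie(D)$, the same computation as in the tensor case shows that $\tilde v(t) = \exp\bigl(\int_0^t -\mu_D(\tilde v(s))\,\diff s\bigr) \cdot v$ solves this ODE, so $\tilde v(t) \in D \cdot v$ for all $t$. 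Since $D$ acts on each weight space $V_\omega$ by a nowhere-vanishing character, $\supp(\tilde v(t)) = \supp(v)$ for all $t$, which remains free. By \cref{lem:general-freeness-moment-map-diagonal} this forces $\mu(\tilde v(t)) = \mu_D(\tilde v(t))$, so $\tilde v(t)$ also solves the original gradient-flow ODE; Picard--Lindelöf uniqueness then yields $v(t) = \tilde v(t) \in D \cdot v$.

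Consequently the projective limit $[v]_\infty$ is represented by a vector $v' \in \overline{D \cdot v} \setminus \{0\}$, and by continuity of the weight-space projections its support is contained in $\supp(v)$, hence free. Combining this with $[w] \in K \cdot [v]_\infty = K \cdot [v']$ gives a $k \in K$ and $\lambda \in \C^\times$ with $v' = \lambda\, k \cdot w$; therefore $\supp(k \cdot w) = \supp(v')$ is free, as required. The main obstacle in writing out the argument rigorously is simply the need to invoke the general-setting versions of the gradient-flow convergence and Ness-uniqueness theorems from \cite{georgoulasMomentWeightInequalityHilbert2021}; conceptually, everything else reduces to the abelian/torus structure of $D$ plus the free-support identity $\mu = \mu_D$ from \cref{lem:general-freeness-moment-map-diagonal}.
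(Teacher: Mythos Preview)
Your proposal is correct and follows essentially the same route as the paper's own proof: reduce to $\supp(v)$ free, run the gradient flow, introduce the auxiliary torus ODE governed by $\mu_D$, solve it explicitly via $\exp\bigl(\int_0^t -\mu_D(\tilde v(s))\,\diff s\bigr)\cdot v$ using abelianity of $D$, invoke \cref{lem:general-freeness-moment-map-diagonal} to identify the two flows, and conclude via Picard--Lindel\"of and the general Ness uniqueness theorem. The only cosmetic difference is that the paper phrases $\mu_D$ as the projection $\Lie(G)\to\Lie(D)$ rather than onto $i\Lie(D_K)$, which amounts to the same map since $\mu$ already lands in $i\Lie(K)$.
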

We will now work towards the proof of~\cref{thm:general-free-tensor-has-free-tensor-in-minimal-K-orbit}.
\begin{theorem}[{\cite[Thm.~6.4]{georgoulasMomentWeightInequalityHilbert2021}}]
  \label{thm:general-GRS-gradflow-ODE-properties}
  Let $v \in V \setminus \{0\}$.
  Then there is a solution~$v(t) \in V$, where $t \geq 0$, to the ordinary differential equation
  \begin{equation}
  \label{eq:GRS-ODE-general}
    \frac{\diff}{\diff t} v(t) = - \mu(v(t)) \star v(t), \quad v(0) = v.
  \end{equation}
  It satisfies~$v(t) \in G \cdot T$ for all $t \geq 0$.
  Moreover,
  writing~$[w] \in \Proj(V)$ for the equivalence class in projective space of a non-zero vector~$w \in V \setminus \{0\}$,
  the limit $[v]_\infty := \lim_{t \to \infty} [v(t)] \in \Proj(V)$ exists, and $\norm{\mu([v]_\infty)} = \min_{p \in \Delta(v)} \norm{p}$.
\end{theorem}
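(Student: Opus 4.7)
The plan is to prove the theorem in four successive steps: (i) local existence and uniqueness for the ODE, (ii) containment of the solution in $G \cdot v$, (iii) global existence via scale-invariance and an energy estimate, and (iv) convergence of the projective trajectory together with identification of the limit as the minimum-norm point of $\Delta(v)$.

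For (i), the right-hand side $v \mapsto -\mu(v) \star v$ is real-analytic on $V \setminus \{0\}$ (the moment map is rational in $v$ and $\bar v$ away from the origin, and the infinitesimal action is bilinear), so Picard--Lindel\"of yields a unique maximal solution with $v(0) = v$. For (ii), I would solve the auxiliary ODE $g'(t) = -\mu(v(t))\, g(t)$ in $G$ with $g(0) = e$, which is well-defined since $\mu(v(t)) \in i\Lie(K) \subseteq \Lie(G)$ depends continuously on $t$. A direct chain-rule computation for the action (using the general identity $\tfrac{\diff}{\diff t}(g(t) \cdot v) = (g'(t) g(t)^{-1}) \star (g(t) \cdot v)$) then shows that $t \mapsto g(t) \cdot v$ satisfies the original ODE, so uniqueness forces $v(t) = g(t) \cdot v$ for all $t$ in the interval of existence.

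For (iii), first observe that $\mu(\lambda v) = \mu(v)$ implies the ODE is scale-invariant, so it descends to a flow on $\Proj(V)$. Differentiating the squared norm on $V$ and using the defining property of the moment map with $X = \mu(v(t))$ gives
\[
  \frac{\diff}{\diff t}\norm{v(t)}^2 = -2\,\mathrm{Re}\langle v(t),\, \mu(v(t)) \star v(t)\rangle = -2\norm{\mu(v(t))}^2 \norm{v(t)}^2,
\]
so $\norm{v(t)}$ is monotonically non-increasing and the solution exists globally; meanwhile, $\mu$ is bounded on the unit sphere, so $[v(t)]$ remains in the compact manifold $\Proj(V)$ for all $t \geq 0$. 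The key reinterpretation for what follows is that, viewed on $\Proj(V)$, the ODE coincides with the negative gradient flow of the real-analytic function $\Phi([v]) = \tfrac{1}{2}\norm{\mu([v])}^2$ with respect to the Fubini--Study metric; this uses Ness's standard identity that the K\"ahler gradient of $\Phi$ at $[v]$ equals the infinitesimal action of $\mu([v])$ on $[v]$.

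For (iv), I would invoke the \L ojasiewicz inequality: since $\Phi$ is real-analytic on the compact real-analytic manifold $\Proj(V)$, near every critical point $[v_0]$ there exist $c,\alpha > 0$ and a neighborhood on which $\abs{\Phi([w]) - \Phi([v_0])}^{1-\alpha} \leq c\, \norm{\nabla\Phi([w])}$. Combined with the monotonicity $\frac{\diff}{\diff t}\Phi([v(t)]) = -\norm{\nabla\Phi([v(t)])}^2$ and compactness of $\Proj(V)$, the standard Leon Simon-type argument shows the arc length of $t \mapsto [v(t)]$ is finite, and therefore $[v(t)]$ converges to some critical point $[v]_\infty$ of $\Phi$. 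Matching $\norm{\mu([v]_\infty)}$ with $\min_{p \in \Delta(v)} \norm{p}$ uses the Kirwan--Ness moment--weight theory: critical points of $\Phi$ on an orbit closure attain $\inf_{[w] \in \overline{G \cdot [v]}} \norm{\mu([w])}$, and by $K$-equivariance of $\mu$ together with the definition of $\Delta(v)$ as $\mu(\overline{G \cdot [v]}) \cap \weylchamber$ (here $\weylchamber$ is the relevant positive Weyl chamber), this infimum equals $\min_{p \in \Delta(v)} \norm{p}$. The hard part is the \L ojasiewicz convergence step: without passing to the compact projective setting one cannot rule out oscillation or spiraling of the trajectory, and identifying the limit with the \emph{global} moment-polytope minimum (rather than a local minimum along the flow) genuinely requires the moment--weight inequality rather than just the monotonicity of $\Phi$ along the flow.
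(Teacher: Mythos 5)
A preliminary remark on the comparison itself: the paper does not prove this statement — it is imported verbatim from Georgoulas--Robbin--Salamon (their Thm.~6.4) — so there is no internal proof to measure your argument against; what you have written is essentially a reconstruction of the cited reference's strategy. Steps (i)--(iii) are correct in outline. One small repair in (iii): monotonicity of $\norm{v(t)}$ alone does not give global existence, since the vector field is undefined at $0$; you also need that $\norm{\mu(v(t))}$ is bounded (it depends only on $[v(t)]$, and $\mu$ is continuous on the compact $\Proj(V)$), which gives $\norm{v(t)}^2 \geq e^{-2Ct}\norm{v}^2 > 0$ and rules out hitting the singularity in finite time; you gesture at this with ``$\mu$ is bounded on the unit sphere'' but do not draw the needed conclusion. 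Step (ii), producing $g(t)$ with $g'(t) = -\mu(v(t))g(t)$ and invoking uniqueness for the resulting linear non-autonomous ODE, is exactly right (and mirrors the torus-orbit argument the paper uses elsewhere).

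The genuine gap is in (iv). Convergence of $[v(t)]$ via the {\L}ojasiewicz gradient inequality is fine and is indeed how the cited reference argues. But the identification $\norm{\mu([v]_\infty)} = \min_{p \in \Delta(v)} \norm{p}$ is the real content of the theorem, and the justification you give is false as stated: it is not true that every critical point of $\Phi = \tfrac12 \norm{\mu}^2$ lying in $\overline{G \cdot [v]}$ attains $\inf_{[w] \in \overline{G \cdot [v]}} \norm{\mu([w])}$. An orbit closure generally meets several Kirwan critical sets with different critical values, and a priori the gradient trajectory could terminate on one with strictly larger $\norm{\mu}$ than the infimum. What must be shown is that the limit of the flow \emph{started inside the orbit} attains the infimum over the orbit closure; this is precisely the moment-weight inequality / Kempf--Ness function analysis (GRS Sections 5--6, or Ness's Theorems 6.1--6.2 via one-parameter subgroups), which you acknowledge in your final sentence but only invoke as a black box. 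Since that statement is the theorem being proved, the proposal is circular at its decisive step. (The remaining translation $\inf_{[w] \in \overline{G \cdot [v]}} \norm{\mu([w])} = \min_{p \in \Delta(v)} \norm{p}$ via $K$-equivariance of $\mu$ and Ad-invariance of the norm is fine.)
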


We also use the following theorem, showing that the minimum-norm points in any $G$-orbit-closure lie in one $K$-orbit.
\begin{theorem}[{\cite[Thm.~6.5]{georgoulasMomentWeightInequalityHilbert2021}}]
    \label{thm:general-second-ness-uniqueness}
    Let $v \in V \setminus \{0\}$. Let~$w, w' \in \closure{G \cdot v} \setminus \{0\}$ be such that~$\norm{\mu(w)} = \norm{\mu(w')} = \min_{p \in \Delta(v)} \norm{p}$.
    Then~$[w] \in K \cdot [w']$.
\end{theorem}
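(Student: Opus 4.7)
The plan is to reduce to Ness's classical uniqueness theorem (which handles the special case $w, w' \in G \cdot v$) by showing that two minimum-norm points in $\overline{G \cdot v}$ must lie in a common closed $G$-orbit. The bridge from moment-map minimization to orbit structure comes from the fact that a minimum-norm point is minimum-norm within its \emph{own} orbit closure, which forces its orbit to be closed (in the affine sense).

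Concretely, the first step is to show that if $w \in \overline{G \cdot v} \setminus \{0\}$ satisfies $\norm{\mu(w)} = \min_{p \in \Delta(v)} \norm{p}$, then $w$ also satisfies $\norm{\mu(w)} = \min_{p \in \Delta(w)} \norm{p}$. Indeed, $\overline{G \cdot w} \subseteq \overline{G \cdot v}$ (orbit closures are $G$-invariant and closed), so $\Delta(w) \subseteq \Delta(v)$; while $\mu(w)$ lies in $\Delta(w)$ after Weyl-chamber reordering, giving $\min_{\Delta(v)} \leq \min_{\Delta(w)} \leq \norm{\mu(w)} = \min_{\Delta(v)}$. Now apply \cref{thm:general-GRS-gradflow-ODE-properties} to $w$: the flow $w(t)$ starting at $w$ has projective limit $[w]_\infty$ with $\norm{\mu([w]_\infty)} = \min_{\Delta(w)} \norm{p} = \norm{\mu(w)}$. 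Since the flow is the negative gradient of $\tfrac12 \norm{\mu([\cdot])}^2$ on $\Proj(V)$, this function is monotonically non-increasing along the trajectory, and starting and ending at the same value forces it to be constant. Hence the gradient vanishes identically along the trajectory, i.e.\ $\mu(w) \star w \in \C \cdot w$. This Ness minimality condition implies $G \cdot w$ is closed in $V \setminus \{0\}$, by the standard Kempf--Ness characterization of closed orbits. The analogous statement holds for $w'$.

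The second step is to invoke the standard result that every reductive $G$-orbit closure in an affine variety contains a unique closed orbit (established via Hilbert's separation of disjoint closed invariant sets by invariants: two closed orbits inside $\overline{G \cdot v}$ would agree on every invariant and hence coincide). Applied to $\overline{G \cdot v}$, this forces $G \cdot w = G \cdot w'$. The third and final step is then Ness's classical theorem \cite{nessStratificationNullCone1984} applied within this common orbit: writing $w' = g \cdot w$ and decomposing $g = k \exp(iX)$ with $k \in K$, $X \in \Lie(K)$ via Cartan, the function $t \mapsto F_w(\exp(itX))$ is convex, and the equality of its endpoint values (consequence of $\norm{\mu(w)} = \norm{\mu(\exp(iX)\cdot w)}$ and the moment-weight identification of derivatives) forces it to be constant, so $\exp(iX)$ fixes $[w]$ and $[w'] = [k \cdot w] \in K \cdot [w]$.

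The main technical obstacle is the closedness claim — in the unstable regime, where $\min_{\Delta(v)} \norm{p} > 0$, the Ness minimality condition $\mu(w) \star w \in \C \cdot w$ does not directly say $0 \notin \overline{G \cdot w}$, but rather identifies $w$ as lying in the unique orbit within $\overline{G \cdot v}$ that is closed once one restricts to the appropriate Levi/parabolic predicted by $\mu(w)$. Some bookkeeping is needed to pass between the projective picture (natural for $\Delta(v)$) and the affine picture (natural for the closed-orbit uniqueness), which can be arranged by normalizing $w, w'$ to have equal norm and working with the induced action of $G$ on the affine cone inside $V$.
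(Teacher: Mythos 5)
Your step 1 (a minimum-norm point of $\Delta(v)$ attained at $w \in \closure{G\cdot v}$ is also the minimum-norm point of $\Delta(w)$, hence the gradient flow started at $w$ is stationary and $\mu(w)\star w \in \C\, w$) is correct and genuinely useful. But the reduction collapses at step 2, and this is not a bookkeeping issue: it is the whole content of the theorem. Note first that the paper does not prove this statement at all --- it is imported verbatim from Georgoulas--Robbin--Salamon --- so your proposal has to stand on its own. In the regime where the theorem is actually used, $\min_{p\in\Delta(v)}\norm{p}>0$, so $w$ lies in the null cone and $0\in\closure{G\cdot w}$; consequently $G\cdot w$ is \emph{never} closed in $V$, the unique closed orbit inside the affine closure $\closure{G\cdot v}$ is $\{0\}$, and the unique-closed-orbit theorem for reductive actions gives you nothing. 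The ``standard Kempf--Ness characterization of closed orbits'' requires a zero of the moment map in the orbit, not the criticality condition $\mu(w)\star w\in\C\,w$; critical points of $\norm{\mu}^2$ with nonzero critical value are exactly the distinguished points of the unstable strata, whose orbits are typically non-closed. Nor does the fallback ``closed in $V\setminus\{0\}$'' rescue the argument: there is no uniqueness statement for such orbits inside one orbit closure. For $G=\C^{\times}$ acting on $\C^2$ by $t\cdot(x,y)=(tx,t^2y)$, the closure of the generic projective orbit contains two distinct closed orbits (the two coordinate lines with the origin removed), both consisting of critical points, distinguished only by the value of $\norm{\mu}$ ($1$ versus $2$). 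Singling out the minimal one, and showing all minimal-norm points of $\closure{G\cdot v}$ lie in a single $K$-orbit, is precisely the statement to be proved, so your reduction is circular. You flag this yourself as ``the main technical obstacle,'' but the proposed fix (``the unique orbit closed once one restricts to the appropriate Levi/parabolic'') is exactly where the real proof lives: Georgoulas--Robbin--Salamon establish Theorem 6.5 via convexity of the Kempf--Ness function along geodesics in $G/K$ together with the moment-weight/stratification machinery, not via the affine unique-closed-orbit theorem.

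Two smaller points. In step 3, equality of $\norm{\mu(w)}$ and $\norm{\mu(\exp(iX)\cdot w)}$ does not by itself force $t\mapsto F_w(\exp(itX))$ to be constant: the derivative of the Kempf--Ness function along the geodesic is $\langle\mu(\exp(itX)\cdot w),X\rangle$, not $\norm{\mu(\exp(itX)\cdot w)}$, and extracting constancy from equal norms at the endpoints is exactly Ness's (nontrivial) argument. This part is harmless only because you may simply cite Ness's classical theorem for two minimizers in the \emph{same} orbit, as the paper itself notes. But with step 2 unproven you never get the two points into one orbit, so the proposal as written does not prove \cref{thm:general-second-ness-uniqueness}; it reproves the easy half and defers the hard half.
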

\Cref{thm:general-second-ness-uniqueness} is an extension of a classical theorem of Ness~\cite[Thm.~6.2]{nessStratificationNullCone1984}, where it is additionally assumed that~$w, w' \in G \cdot v$.
For the non-free tensors we construct in this paper, it suffices to use the original theorem of Ness. However, \cref{thm:general-second-ness-uniqueness} allows us to prove a more general criterion for freeness of tensors that we will need later.

\begin{corollary}
    \label{cor:general-moment-map-norm-minimizers-unique-in-orbit-closure}
    Let $v \in V \setminus \{0\}$.
    If $w \in \closure{G \cdot v}$ is such that $\norm{\mu(w)} = \min_{p \in \Delta(v)} \norm{p}$, then $[w] \in K \cdot [v]_\infty$.
\end{corollary}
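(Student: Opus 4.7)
The plan is to directly mirror the argument used to prove \cref{cor:moment-map-norm-minimizers-unique-in-orbit-closure}, using the general analogues of the two ingredients that appear right above the corollary. First, I would apply \cref{thm:general-GRS-gradflow-ODE-properties} to produce the gradient-flow trajectory $v(t) \in G \cdot v$ together with its projective limit $[v]_\infty = \lim_{t\to\infty} [v(t)] \in \Proj(V)$, which satisfies $\norm{\mu([v]_\infty)} = \min_{p \in \Delta(v)} \norm{p}$. Since each $v(t)$ lies in $G \cdot v$, the limit $[v]_\infty$ lies in $\closure{G \cdot [v]}$ in $\Proj(V)$.

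Next, I would choose a representative $u \in V \setminus \{0\}$ of $[v]_\infty$ that lies in $\closure{G \cdot v} \setminus \{0\}$. Such a $u$ can be obtained by rescaling a suitable subsequence $v(t_n)$ by scalars $\lambda_n \in \C^{\times}$ so that $\lambda_n v(t_n) \to u$ in $V$. Because $\mu$ is invariant under nonzero scalar multiplication, $\norm{\mu(u)} = \norm{\mu([v]_\infty)} = \min_{p\in\Delta(v)}\norm{p}$, matching the hypothesis $\norm{\mu(w)} = \min_{p\in\Delta(v)}\norm{p}$. Then \cref{thm:general-second-ness-uniqueness}, applied to the pair $w, u \in \closure{G \cdot v}\setminus \{0\}$, immediately yields $[w] \in K \cdot [u] = K \cdot [v]_\infty$, which is the desired conclusion.

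The only mildly delicate point is the lifting step: one must ensure that a representative of $[v]_\infty$ actually sits inside $\closure{G \cdot v}$ in $V$, rather than only inside $\closure{\C^{\times} \cdot G \cdot v}$. This is immediate whenever $G$ contains the scalar subgroup $\C^{\times} \cdot \mathrm{id}_V$, which covers the tensor action of $\GL_a \times \GL_b \times \GL_c$ on $\C^a \otimes \C^b \otimes \C^c$ that motivates this paper. In the general reductive setting the argument still goes through: the trajectory $v(t)$ itself lies in $G \cdot v$, and any convergent rescaling extracted from a subsequence of $v(t_n)$ picks out a valid element of $\closure{G \cdot v}$ by closedness under taking limits. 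Modulo this verification, the corollary is just a two-line concatenation of the two preceding theorems.
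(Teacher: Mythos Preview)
Your approach is exactly the paper's: its entire proof reads ``This follows directly from \cref{thm:general-GRS-gradflow-ODE-properties,thm:general-second-ness-uniqueness}.'' So the two-line concatenation you describe is precisely what is intended.

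The one point to flag is your final paragraph. The claim that ``any convergent rescaling extracted from a subsequence of $v(t_n)$ picks out a valid element of $\closure{G \cdot v}$'' is not justified when $G$ does not act through scalars: $\lambda_n v(t_n)$ lies in $\C^\times \cdot G \cdot v$, not in $G \cdot v$, so its limit is only guaranteed to sit in $\closure{\C^\times \cdot G \cdot v}$. The clean resolution is not to lift at all: the underlying result \cite[Thm.~6.5]{georgoulasMomentWeightInequalityHilbert2021} is naturally a statement about the projective orbit closure $\closure{G \cdot [v]} \subseteq \Proj(V)$, and both $[w]$ (as the image of $w \in \closure{G \cdot v}$ under the continuous projection) and $[v]_\infty$ (as a limit of $[v(t)] \in G \cdot [v]$) lie there. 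Applying the theorem projectively gives $[w] \in K \cdot [v]_\infty$ directly. The paper's one-line proof glosses over this point as well, so your level of detail already exceeds it; just replace the affine lifting argument with the projective one.
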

\begin{proof}
  This follows directly from~\cref{thm:general-GRS-gradflow-ODE-properties,thm:general-second-ness-uniqueness}.
\end{proof}

\begin{proof}[Proof of~\cref{thm:general-free-tensor-has-free-tensor-in-minimal-K-orbit}.]
    Assume without loss of generality that~$v$ has free support, otherwise replace it by a vector in its orbit with free support; this does not affect~$\overline{G \cdot v}$ or~$\Delta(v)$.
    Consider the solution $v(t)$ to the gradient flow and its projective limit $[v]_\infty$ from \cref{thm:general-GRS-gradflow-ODE-properties}.
    We claim that~$v(t)$ remains in the orbit of~$v$ under the action of the maximal algebraic torus~$D \subseteq G$.
    Since the support of a vector is invariant under the action of $D$, this implies that $v(t)$ has the same support as $v$ for any $t$.
    Hence if we write $[v]_\infty = [v']$, then the support of $v' \in \overline{D \cdot v}$ is contained in the support of $v$, and hence $v'$ is also a free tensor.
    \Cref{cor:general-moment-map-norm-minimizers-unique-in-orbit-closure} implies that $[v'] \in K \cdot [w]$.
    After appropriate rescaling of $v'$, which does not affect freeness of its support, this implies $v' \in K \cdot w$, and we are done.
    
    We now prove that~$v(t)$ is in the~$D$-orbit of~$v$.
    Let~$\momentmaptorusgeneral\colon V \setminus \{0\} \to \Lie(D)$ be the composition of~$\mu$ with the orthogonal projection~$\Lie(G) \to \Lie(D)$.
    Consider the ODE
    \[
      \frac{\diff}{\diff t} \tilde v(t) = - \momentmaptorusgeneral(\tilde v(t)) \star \tilde v(t), \quad \tilde \tilde v(0) = v.
    \]
    Then the solution~$\tilde v(t)$ is in the~$D$-orbit of~$v$.
    To see this, first observe that $t \mapsto \exp(\int_0^t -\momentmaptorusgeneral(\tilde v(s)) \, \mathrm{d}s) \cdot v$ is also a solution to the ODE, because 
    \begin{align*}
        & \frac{\diff}{\diff t} \left(\exp\left(\int_0^t -\momentmaptorusgeneral(\tilde v(s)) \, \mathrm{d}s\right) \cdot v\right) \\
        & = \left.\frac{\diff}{\diff t'}\right|_{t'=0} \left[ \exp\left(\int_{t}^{t+t'} -\momentmaptorusgeneral(\tilde v(s)) \, \mathrm{d}s \right) \cdot \left(\exp\left(\int_0^t -\momentmaptorusgeneral(\tilde v(s)) \, \mathrm{d}s\right) \cdot v\right) \right] \\
        & = - \momentmaptorusgeneral(\tilde v(t)) \star \left(\exp\left(\int_0^t -\momentmaptorusgeneral(\tilde v(s)) \, \mathrm{d}s\right) \cdot v\right)
    \end{align*}
    by virtue of~$\int_0^t -\momentmaptorusgeneral(\tilde v(s)) \, \mathrm{d}s$ being an element of~$i \Lie(D_K) \subseteq \Lie(D)$ for every~$t \geq 0$, so that its exponential is easy to differentiate.
    By uniqueness of solutions to ODEs\footnote{The moment map, its diagonal part, and the infinitesimal action are locally Lipschitz functions, so uniqueness follows from a standard application of Picard--Lindel\"of theory.} we find that 
    \begin{equation*}
      \tilde v(t) = \exp\left(\int_0^t -\momentmaptorus(\tilde v(s)) \, \mathrm{d}s\right) \cdot v.
    \end{equation*}
    Because $\exp\bigl(\int_0^t -\momentmaptorusgeneral(\tilde v(s)) \, \mathrm{d}s\bigr) $ is an invertible diagonal matrix for every $t \geq 0$, this implies that $\tilde v(t) \in D \cdot v$.
    This yields~$\mu(\tilde v(t)) = \momentmaptorusgeneral(\tilde v(t))$ by~\cref{lem:general-freeness-moment-map-diagonal}.
    Hence $\tilde v(t)$ is also a solution to the ODE of 
    \cref{thm:general-GRS-gradflow-ODE-properties} (\cref{eq:GRS-ODE-general}).
    Another application of uniqueness of solutions to ODEs yields~$\tilde v(t) = v(t)$ for all~$t \geq 0$, and we conclude~$v(t) = \tilde v(t) \in D \cdot v$.
\end{proof}

\end{document}